\newcommand{\abs}[1]{\lvert#1\rvert}
\newcommand{\C}{\mathbb{C}}
\newcommand{\R}{{\mathbb R}}
\newcommand{\N}{{\mathbb N}}
\newcommand{\eps}{\varepsilon}
\newcommand{\cZ}{{\mathcal Z}[\zeta]}
\newcommand{\p}{\mathfrak{p}}
\def\dis
\def\C{{\mathbb C}}% complex numbers
\def\R{{\mathbb R}}% real numbers
\def\N{{\mathbb N}}% nonnegative integers
\def\T{{\mathbb T}}% torus
\def\cH{\mathcal H}%Energie ou Hamiltonien
\def\cF{\mathcal F}% Fourier transform et quantit� F de Sideris
\def\cM{\mathcal M}% Masse
\def\cO{\mathcal O}
\def\cI{\mathcal I} % Viriel 
\def\cX{\mathcal X} % Moment d'ordre 1
\def\cU{\mathcal U} % Quantit� de Carles/Nakamura
\def\cZ{\mathcal Z} % Energie pseudo-conforme
\def\cA{\mathcal A} % Moment angulaire
\def\cP{\mathcal P} % Moment
\def\cS{\mathcal S}
\theoremstyle{plain}
\newtheorem{theorem}{Theorem}[section]
\newtheorem{lemma}[theorem]{Lemma}
\newtheorem{corollary}[theorem]{Corollary}
\newtheorem{proposition}[theorem]{Proposition}
\newtheorem{question}{Question}
\theoremstyle{definition}
\newtheorem{remark}[theorem]{Remark}
\newtheorem*{remark*}{Remark}
\def\virgp{\raise 2pt\hbox{,}}
\def\({\left(}
\def\){\right)}
\def\<{\left\langle}
\def\>{\right\rangle}
\def\le{\leqslant}
\def\ge{\geqslant}
\newcommand{\Int}{\displaystyle \int}
\newcommand{\Frac}{\displaystyle \frac}
\renewcommand{\div}{{\rm div}\,}
\def\Eq#1#2{\mathop{\sim}\limits_{#1\rightarrow#2}}
\def\Tend#1#2{\mathop{\longrightarrow}\limits_{#1\rightarrow#2}}
\def\d{{\partial}}
\def\eps{\varepsilon}
\def\si{{\sigma}}
\DeclareMathOperator{\RE}{Re}
\DeclareMathOperator{\IM}{Im}
\DeclareMathOperator{\DIV}{\mathrm{div}}
\numberwithin{equation}{section}
\begin{document}
\title[Madelung, Gross--Pitaevskii and Korteweg]{Madelung,
  Gross--Pitaevskii and Korteweg} 
\author[R. Carles]{R\' emi Carles}
\address{CNRS \& Univ. Montpellier~2\\Math\'ematiques, UMR 5149
\\CC~051\\34095
  Montpellier\\ France}
\email{Remi.Carles@math.cnrs.fr}

\author[R. Danchin]{Rapha\"{e}l Danchin}
\address{LAMA UMR CNRS 8050,
Universit\'e Paris EST\\
61, avenue du G\'en\'eral de Gaulle\\
94010 Cr\'eteil Cedex\\ France}
\email{danchin@univ-paris12.fr}
\author[J.-C. Saut]{Jean-Claude Saut}
\address{Laboratoire de Math\' ematiques, UMR 8628\\
Universit\' e Paris-Sud et CNRS\\ 91405 Orsay, France}
\email{jean-claude.saut@math.u-psud.fr}

\begin{abstract}
This paper surveys various aspects of the hydrodynamic formulation of
the nonlinear Schr\"{o}dinger equation obtained via the Madelung
transform in connexion to models of quantum hydrodynamics and to
compressible fluids of the Korteweg type. 
\end{abstract}
%\date{\today}
\thanks{This work was supported by the French ANR projects Equa-disp
(ANR-07-BLAN-0250-01) and 
  R.A.S. (ANR-08-JCJC-0124-01).}

\maketitle

\section{Introduction}

In his seminal work \cite{Madelung} (see also \cite{LandauQ}), E. Madelung introduced 
the so-called {\it Madelung transform} in order to
relate the linear Schr\"{o}dinger equation to  a {\it hydrodynamic
  type system}. This system takes (slightly) different forms according
to the context: linear or nonlinear equation with various
nonlinearities. 
For the semi-classical  nonlinear Schr\"{o}dinger equation (shortened
in  NLS in what follows): 
\begin{equation*}
  i\eps\d_t \psi^\eps +\frac{\eps^2}{2}\Delta \psi^\eps = f\(\lvert
  \psi^\eps\rvert^2\)\psi^\eps, 
\end{equation*}
the Madelung transform  amounts to setting
$$ \psi^\eps(t,x)=
  \sqrt{\rho(t,x)} e^{i\phi(t,x)/\eps},$$ 
so as to get  the following  system for  $\rho$ and $v=\nabla \phi$: 
\begin{equation}
  \label{eq:qhd}
  \left\{
    \begin{aligned}
      &\d_t v +v\cdot \nabla v +
      \nabla f\(\rho\)= \frac{\eps^2}{2}\nabla\( \frac{\Delta\(\sqrt
      \rho\)}{\sqrt \rho}\) ,\\
& \d_t \rho  + \DIV \(\rho v\) =0.  
    \end{aligned}
\right.
\end{equation}

This system is referred to as the  \emph{hydrodynamic form} of NLS because
of its similarity with the compressible Euler equation (which corresponds to $\eps$=0). 
The additional  term on the right-hand side is the so-called  \emph{quantum pressure}.
\smallbreak
Madelung transform is crucial to
investigate qualitative properties of the  nonlinear 
Schr\"{o}dinger equation with nonzero boundary conditions at infinity
whenever the solution is  not expected  to vanish ``too often''. 
 Of particular interest is the so-called 
 \emph{Gross--Pitaevskii equation} 
 $$ 
  i\eps\d_t \psi^\eps +\frac{\eps^2}{2}\Delta \psi^\eps = (|\psi^\eps|^2-1)\psi^\eps, 
 $$
  which corresponds to   
  $f(r)=r-1$ or, more generally, the case where 
   $f$ is a smooth function vanishing at some $r_0$ and such that $f'(r_0)<0.$
   This covers in particular the ``cubic-quintic'' NLS 
   ($f(r)=-\alpha_1+\alpha_3r-\alpha_5r^2$ with $\alpha_1,\alpha_3,\alpha_5>0$). 
   
   Using Madelung transform is also rather
popular to  study the semi-classical limit of NLS. This requires $\rho$ to be 
nonvanishing, though. We shall see  below to what extent  the presence of vacuum (that is of points where 
$\rho$ vanishes) is merely a technical problem due
to the approach related to Madelung transform. 
\smallbreak

The hydrodynamic form of NLS may be seen as a
particular case of the system of {\it quantum fluids} (QHD) 
with a suitable choice of the pressure law, which, 
in turn, enters in the class of \emph{Korteweg (or capillary) fluids}.
We aim here at further investigating the link between these  three
{\it a priori} disjoint domains, through various uses of the Madelung
transform.
%, {\it i.e.} of the hydrodynamical form, in the theory of the nonlinear  Schr\"{o}dinger equation, in particular in connexion with
%quantum hydrodynamics  and the theory of capillary fluids.
 In passing, we will 
also review some more or less known facts pertaining to the
theories of quantum fluids and Korteweg fluids. The most original part
of this work is  a simple proof of a recent result of Antonelli and
Marcati \cite{AnMa,AM09}, on the global existence of weak
solutions to a quantum fluids system.

\subsection{Organization of the paper}
The paper is organized as follows.
The second  section is a review of the connexions of the Madelung
transform with  the semi-classical limit of the nonlinear  Schr\"{o}dinger
equation. In passing we briefly present  the state-of-the-art for 
the Cauchy problem for the Gross-Pitaevskii equation. 
In Section~\ref{sec:qhddirect}, we solve the quantum hydrodynamical
system \eqref{eq:qhd} by a direct method based on the use of an extended 
formulation, and explain how it may be adapted to tackle general
Korteweg fluids.  
In Section~\ref{s:JC},  we 
review the use of the Madelung  transform  and of the hydrodynamical form of the
Gross--Pitaevskii equation  to study the existence and properties
of its traveling wave solutions and of its transonic limit, both in
the steady and unsteady cases. 
We give in Section~\ref{sec:global} a simple proof of the
aforementioned result of Antonelli and Marcati (\cite{AnMa,AM09}). 
Lastly we list  in Appendix the basic  conservation laws for the
Schr\"{o}dinger, the QHD and the compressible Euler equations, 
and show that some of these laws naturally carry over to  general Korteweg fluids.

\subsection{Notations}
\begin{itemize}
\item We denote by $\abs{\cdot}_p$ ($1\le p\le\infty$) the standard
  norm of the Lebesgue spaces $L^p(\R^d)$. 
\item The standard $H^s(\R^d)$ Sobolev norm will be denoted
  $\|\cdot\|_s$.
\item We use the Fourier multiplier notation: $f(D)u$ is defined as
  ${\mathcal F}(f(D)u)(\xi)=f(\xi) 
\widehat{u}(\xi)$, where ${\mathcal F}$ and $\widehat{\cdot}$ stand
for the Fourier transform. 
\item The operator
$\Lambda=(1-\Delta)^{1/2}$ is equivalently defined using the Fourier
multiplier notation to be $\Lambda=(1+\vert D\vert^2)^{1/2}$.
\item The partial derivatives will be denoted with a subscript, {\it
    e.g.} $u_t, u_x,\dots$ or $\partial_t u, \partial_x u,\dots$
    or even $\d_j$ (to designate $\d_{x_j}$).
    
\item $C$ will denote various nonnegative absolute constants, the meaning of which will be clear
from the context.
\end{itemize}

%%%%%%%%%%%%%%%%%%%%%%%%%%%%%%%%%%%%%%%%%%%%%%%%%%%%

\section{Madelung transform and the semi-classical limit of NLS}\label{s:remi} 
%%%%
As already mentioned in the Introduction, we consider the equation
\begin{equation}\label{eq:nlssemi}
  i\eps\d_t \psi^\eps +\frac{\eps^2}{2}\Delta \psi^\eps = f\(\lvert
  \psi^\eps\rvert^2\)\psi^\eps\quad ;\quad \psi^\eps(0,x)=
  \sqrt{\rho_0(x)} e^{i\phi_0(x)/\eps}.
\end{equation}
Here we have in mind the limit $\eps\to 0$. 
The space variable $x$ belongs to $\R^d$ in this paragraph. The periodic
case $x\in \T^d$ being of particular interest for numerical
simulations however (see e.g. \cite{BJM2,DGM} and references therein), we will
state some analogous results in that case, 
which turn out to be a little simpler than in the case $x\in\R^d$. 
In this section, we shall focus on  two types of nonlinearity:
\begin{itemize} 
\item Cubic, defocusing nonlinearity\footnote{We could consider more
  general defocusing nonlinearities such as  $f\(\lvert
  \psi^\eps\rvert^2\)\psi^\eps =\lvert
  \psi^\eps\rvert^{2\si}\psi^\eps$, $\si\in \N$. We consider the
  exactly cubic case for the simplicity of the exposition only.}: 
$$f\(\lvert
  \psi^\eps\rvert^2\)\psi^\eps =\lvert
  \psi^\eps\rvert^{2}\psi^\eps.$$
The Hamiltonian associated to \eqref{eq:nlssemi} then reads
\begin{equation*}
  \cH^\eps_{\rm NLS}\(\psi^\eps\) = \lVert \eps \nabla
  \psi^\eps\rVert_{L^2}^2 + \lVert
  \psi^\eps\rVert_{L^{4}}^{4}. 
\end{equation*}
It is well defined on the Sobolev space $H^1(\R^d)$
in dimension  $d\le 4$.
\item  Gross--Pitaevskii equation:  
$$f\(\lvert
  \psi^\eps\rvert^2\)\psi^\eps =\(\lvert
  \psi^\eps\rvert^{2}-1\)\psi^\eps.$$ 
In this case, the natural Hamiltonian associated to \eqref{eq:nlssemi}
is 
\begin{equation*}
  \cH^\eps_{\rm GP}\(\psi^\eps\) = \lVert \eps \nabla
  \psi^\eps\rVert_{L^2}^2 + \left\lVert
  \lvert\psi^\eps\rvert^2-1 \right\rVert_{L^{2}}^{2}. 
\end{equation*}
\end{itemize}

In both cases, the Hamiltonian defines an energy space, in which existence and
uniqueness for the Cauchy problem \eqref{eq:nlssemi} have been
established.  

The case of a defocusing cubic nonlinearity is now well understood.
In dimension $d\le 4$  the corresponding NLS equation is globally 
well-posed in $H^1(\R^d),$ and the additional $H^s(\R^d)$ regularity
($s\ge 1$) is  propagated (see  the textbooks \cite{CazCourant,LP,Tao}
and the references therein). 
\medbreak
The situation is more complicated  for the Gross--Pitaevskii equation, where
the finite energy solutions $\psi^\eps$ cannot be expected to be in
$L^2(\R^d)$, since 
$|\psi^\eps|^2-1\in L^2(\R^d)$. As noticed in \cite{Zhidkov,Zhbook},
and extended in 
\cite{Gallo}, a convenient space to study the Gross--Pitaevskii equation is the
Zhidkov space:
\begin{equation*}
  X^s(\R^d)= \{  \psi \in L^\infty(\R^d)\ ;\ \nabla \psi \in
  H^{s-1}(\R^d)\}\quad\hbox{with } \ s>d/2.
\end{equation*}
 \begin{remark}\label{rem:infini}
In the case $x\in \T^d$,  the spaces $H^s(\T^d)$ and $X^s(\T^d)$ (with
obvious definitions) are the same.

In the case $x\in\R^d,$ as a consequence of the
Hardy--Littlewood--Sobolev inequality 
(see e.g.  \cite[Th.~4.5.9]{Hormander1} or
\cite[Lemma~7]{PG05}), one may show that 
if $d\ge 2$ and $\psi\in {\mathcal D}'(\R^d)$ is such that $\nabla
\psi\in L^p(\R^d)$ for some $p\in ]1,d[$, then there 
exists a constant $\gamma$ such that $\psi-\gamma \in
L^{q}(\R^d)$, with $1/p=1/q+1/d$. Morally, $\gamma$ is the limit
of $\psi$ at infinity.  If $d\ge 3$ then we can take $p=2$,
so every function in $X^s(\R^d)$ satisfies the above
property; for $d=2$, the above assumption requires a little more decay
on $\nabla \psi$ 
than general functions in $X^s(\R^2)$. 
\end{remark}

The well-posedness issue in the natural energy space 
$$E(\R^d)=\lbrace \psi\in H^1_{\text{loc}}(\R^d),\;\nabla \psi \in L^2(\R^d),\;|\psi|^2-1\in L^2(\R^d) \rbrace$$
associated to the Gross-Pitaevskii equation has been investigated only very
recently by C. Gallo in \cite{Gallo08} and P. G\'erard in
\cite{PG05,Ger08}, in dimension $d\le 4.$ 

Let us emphasize that in the $\R^d$ case, the energy space is no
longer a linear space (contrary to the case of zero boundary condition
a infinity where it is $H^1(\R^n)$),  
hence solving  the  Gross--Pitaevskii equation in $E(\R^d)$ is more complicated. However, if $d=3,4,$ one may show that $E(\R^d)$ coincides with 
$$\lbrace \psi=c(1+v), c\in \mathbb S^1, \psi\in H^1(\R^d),
2\RE\psi+|\psi|^2\in L^2(\R^d)\rbrace,$$ 
which allows to endow it with a structure of a metric space.  The case
$d=2$ is slightly more technical.

\subsection{Some issues related to the use of the Madelung transform}

In the semi-classical context, Madelung
transform  consists  in seeking 
\begin{equation}
  \label{eq:madelung}
  \psi^\eps(t,x) = \sqrt{\rho(t,x)} e^{i\phi(t,x)/\eps}
\end{equation}
for some $\rho\ge 0$ and real-valued function $\phi$. Plugging \eqref{eq:madelung}
into \eqref{eq:nlssemi} and separating real and imaginary values
yields:
\begin{equation}
  \label{eq:separ}
  \left\{
    \begin{aligned}
      &\sqrt \rho \( \d_t \phi +\frac{1}{2}\lvert \nabla \phi\rvert^2 +
      f(\rho)\)= \frac{\eps^2}{2}\Delta\sqrt \rho\quad ;\quad
      \phi_{\mid t=0} =\phi_0,\\
& \d_t \sqrt \rho + \nabla \phi\cdot \nabla \sqrt \rho
      +\frac{1}{2}\sqrt \rho \Delta \phi =0\quad ;\quad \rho_{\mid
      t=0}=\rho_0.  
    \end{aligned}
\right.
\end{equation}
Two comments are in order at this stage: the first equation shows that
$\phi$ depends on $\eps$ and the second equation shows that so does
$\rho$ in general. We shall underscore this fact by using the notation
$(\phi^\eps,\rho^\eps)$. Second, the equation for $\phi^\eps$ can be
simplified, \emph{provided that $\rho^\eps$ has no zero}. Introducing
the velocity $v^\eps = \nabla \phi^\eps$, \eqref{eq:separ} yields the
system of \emph{quantum hydrodynamics} \eqref{eq:qhd} presented in the
introduction.   

To study  the limit $\eps\to 0$ (the \emph{Euler limit}), it is
natural to consider the 
following compressible Euler equation:
\begin{equation}
  \label{eq:euler}
  \left\{
    \begin{aligned}
      &\d_t v +v\cdot \nabla v+
      \nabla f\(\rho\)= 0\quad ;\quad 
      v_{\mid t=0} =\nabla\phi_0,\\
& \d_t \rho  + \DIV \(\rho v\) =0\quad ;\quad \rho_{\mid
      t=0}=\rho_0.  
    \end{aligned}
\right.
\end{equation}
Note that the solution to \eqref{eq:euler} must not be expected to
remain smooth for all time, even if the initial data are smooth. In
\cite{MUK86} (see also \cite{Xin98}), it is shown that compactly
supported initial data lead to the formation of singularities in
finite time. In \cite{SiderisCMP}, the author constructs a solution
developing singularities in finite time, in the absence of vacuum. 
In \cite{Alinhac}, it is  shown that for rotationally invariant
two-dimensional data 
that are perturbation of size $\eps$ of a rest state, blow-up occurs
at time $T_\eps\sim \tau/\eps^2.$

\begin{remark}
  In the framework of this section, we have $f'=1>0$. As pointed out
  above, the case $f(r)=r^\sigma$, $\sigma\in \N$, could be considered
  as well. On the other hand, if $f'<0$   (corresponding to the
  semiclassical limit 
  for a focusing nonlinearity),  \eqref{eq:euler} becomes
  an elliptic system which may be solved locally-in-time  for analytic
  data (see \cite{PGX93,ThomannAnalytic}).  
  At the same time, in the case $d=1$ and $f'=-1$, it has been shown in
  \cite{GuyCauchy} that there are smooth initial data for which the Cauchy
problem \eqref{eq:euler}  has no solution.  In short,  working with analytic
data in this context   is not only convenient,
it is mandatory. 
\end{remark}

Let us emphasize that the hydrodynamical formulations for the cubic
NLS and the Gross--Pitaevskii equations   
are exactly the same as in both cases we have $\nabla f(\rho)=\nabla\rho.$ 
{}From this point of view, studying either of the equations
is mainly a matter of boundary   conditions  at infinity: $H^s(\R^d)$
is the appropriate 
space for the cubic NLS equation whereas $X^s(\R^d)$ is adapted to the
Gross--Pitaevskii equation.

In the sequel, $Z^s$ denotes  either $H^s(\R^d)$  or $X^s(\R^d).$ In
addition, we set 
\begin{equation*}
  Z^\infty = \bigcap_{s>d/2} Z^s.
\end{equation*}
\begin{theorem}\label{th:euler}
  Let $\rho_0,\phi_0\in C^\infty(\R^d)$ with $\sqrt \rho_0,\nabla \phi_0\in
  Z^s$ for some $s>{d/2+1}$. There exists a unique maximal solution 
   $(v,\rho)\in C([0,T_{\rm max});Z^s)$
   to \eqref{eq:euler}.
    In addition,  $T_{\rm max}$ is independent of $s>d/2+1$
  and 
    \begin{equation*}
   T_{\rm max}<+\infty\Longrightarrow  \int_0^{T_{\rm
       max}}\|(v,\sqrt\rho)(t)\|_{W^{1,\infty}}\,dt=+\infty. 
  \end{equation*}
Finally, if  $\nabla \phi_0$ and $\rho_0$ are smooth, nonzero and
compactly supported then $T_{\rm max}$ is finite. 
\end{theorem}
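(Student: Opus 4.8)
The plan is to recast \eqref{eq:euler} as a symmetric hyperbolic system in which the vacuum set $\{\rho=0\}$ is no longer singular, by choosing $a:=\sqrt\rho$ rather than $\rho$ as the density unknown. Substituting $\rho=a^2$ into the continuity equation and dividing by $2a$ shows that $(v,a)$ must solve
\begin{equation}\label{eq:euler-a}
\d_t v+v\cdot\nabla v+2f'(a^2)\,a\,\nabla a=0,\qquad
\d_t a+v\cdot\nabla a+\tfrac12\,a\,\DIV v=0 ,
\end{equation}
a system that makes sense even where $a$ vanishes. Conversely, the second equation is a transport equation for $a$ along the flow of $v$, so a smooth solution of \eqref{eq:euler-a} keeps the sign of its data; since $a_0=\sqrt{\rho_0}\ge0$ we get $a\ge0$, and then $\rho:=a^2$ solves \eqref{eq:euler}. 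This reduces the theorem to \eqref{eq:euler-a}. Writing \eqref{eq:euler-a} as $\d_t U+\sum_jA_j(U)\d_j U=0$ with $U=(v,a)$, one checks that, because here $f'\equiv1$, the \emph{constant} matrix $S=\operatorname{diag}(1,\dots,1,4)$ is a Friedrichs symmetrizer: $S$ is positive definite and each $SA_j$ is symmetric. (For general $f$ one uses $S=\operatorname{diag}(1,\dots,1,4f'(a^2))$, positive definite precisely when $f'>0$.)

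I would first treat $Z^s=H^s$. As $s>d/2+1$, the embedding $H^s\hookrightarrow W^{1,\infty}$ holds and $U\mapsto A_j(U)$ is polynomial, so the classical theory of quasilinear symmetric hyperbolic systems (Kato, Majda) yields a unique maximal solution $U\in C([0,T_{\rm max});H^s)\cap C^1([0,T_{\rm max});H^{s-1})$. Applying $\Lambda^s$ to \eqref{eq:euler-a}, pairing with $S\Lambda^s U$ and using Moser and commutator estimates gives the energy inequality
\begin{equation}\label{eq:euler-egy}
\frac{d}{dt}\|U(t)\|_s^2\le C\big(\|U(t)\|_{L^\infty}\big)\,\|\nabla U(t)\|_{L^\infty}\,\|U(t)\|_s^2 .
\end{equation}
Two consequences follow. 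First, since both $\|U\|_{L^\infty}$ and $\|\nabla U\|_{L^\infty}$ are bounded by $\|U\|_{W^{1,\infty}}$, Gronwall's lemma applied to \eqref{eq:euler-egy} shows that $\|U\|_s$ stays bounded whenever $\int_0^{T_{\rm max}}\|(v,\sqrt\rho)\|_{W^{1,\infty}}\,dt<\infty$; were $T_{\rm max}$ finite, the solution could then be continued, which is the announced blow-up criterion. Second, the continuation quantity $\int_0^T\|U\|_{W^{1,\infty}}\,dt$ does not involve $s$; propagating the $H^{s_2}$ bound along the $H^{s_1}$ solution for $d/2+1<s_1<s_2$ (persistence of regularity) then shows that $T_{\rm max}$ is independent of $s$.

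The main difficulty is the Zhidkov case $Z^s=X^s$, where $U$ need not belong to $L^2$, so \eqref{eq:euler-egy} cannot be used as it stands. Here I would use that $X^s$ is a Banach algebra for $s>d/2$ and that $\|w\|_{X^s}$ is equivalent to $\|w\|_{L^\infty}+\|\nabla w\|_{s-1}$, and split the norm accordingly. The $L^\infty$ part is propagated by integrating in time the bound $\|\d_t U\|_{L^\infty}\le C\,\|U\|_{L^\infty}\|\nabla U\|_{L^\infty}$, read off directly from \eqref{eq:euler-a} (all factors are finite since $\nabla U\in H^{s-1}\hookrightarrow L^\infty$); the gradient part $\|\nabla U\|_{s-1}$ obeys an $L^2$ estimate of the form \eqref{eq:euler-egy} after one differentiation of \eqref{eq:euler-a}. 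This sidesteps subtracting from $U$ its (time independent) limit at infinity, which would only land in $L^q$, not $L^2$, when $d=2$ (cf. Remark~\ref{rem:infini}). Existence then follows from a standard mollification/iteration scheme, uniqueness from \eqref{eq:euler-egy} applied to the difference of two solutions, and the blow-up criterion and $s$-independence exactly as above.

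Finally, the blow-up for smooth, nonzero, compactly supported $(\nabla\phi_0,\rho_0)$ is a purely Eulerian fact that does not use the hydrodynamic origin of \eqref{eq:euler}. For $f(\rho)=\rho$ the system \eqref{eq:euler} is the compressible Euler system with pressure $p=\tfrac12\rho^2$, i.e. a polytropic law with $\gamma=2$, so I would simply invoke the singularity-formation result of Makino--Ukai--Kawashima \cite{MUK86} (see also \cite{Xin98,SiderisCMP}): by finite propagation speed the supports of $\rho(t)$ and $v(t)$ remain in a ball whose radius grows at most linearly in $t$, while a virial/momentum functional built from the conservation laws grows quadratically under the action of the pressure, which is incompatible with a global $C^1$ solution having compactly supported density. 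Hence no global smooth solution exists and $T_{\rm max}<+\infty$. The expected main obstacle is thus the $X^s$ analysis, the vacuum being already tamed by the substitution $a=\sqrt\rho$.
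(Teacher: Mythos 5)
Your existence and uniqueness argument is essentially the paper's own: the same substitution $a=\sqrt{\rho}$, the same constant symmetrizer $S=\operatorname{diag}(I_d,4)$, the standard quasilinear symmetric hyperbolic theory giving the maximal solution, the blow-up criterion and the $s$-independence, and then recovery of $\rho$ (the paper defines $\rho$ as the solution of the linear continuity equation and identifies it with $a^2$ by uniqueness, which is equivalent to your sign-preservation remark along trajectories). Your splitting $\|w\|_{X^s}\sim\|w\|_{L^\infty}+\|\nabla w\|_{H^{s-1}}$ for the Zhidkov case is a reasonable expansion of what the paper simply delegates to \cite{ACIHP}, and is consistent with how that reference proceeds.

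There is, however, a genuine gap in your sketch of the finite-time blow-up. As you state it --- finite propagation speed gives a support radius growing at most linearly, while a virial functional grows quadratically --- there is no contradiction: if the support lies in a ball of radius $R_0+\sigma t$, then $\int\rho|x|^2\,dx\le M(R_0+\sigma t)^2$ \emph{also} grows quadratically, and nothing forces the two quadratic rates to clash. The actual mechanism, both in the paper and in \cite{MUK86}, is stronger: in the symmetrized variables $(v,a)$ the coefficient matrices $A_j$ vanish at the zero state (the system \eqref{eq:euler2} has no linear part), so the propagation speed off the rest state is \emph{zero}, and a smooth solution with compactly supported data keeps exactly its initial support for all time. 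It is only with this fixed support that the virial identity
\begin{equation*}
\frac{d^2}{dt^2}\int\rho|x|^2\,dx \;=\; 2\int\rho|v|^2\,dx+2d\int P(\rho)\,dx\;\ge\; c\,\cH\;>\;0
\end{equation*}
(whose right-hand side is bounded below by the conserved energy) yields $\int\rho|x|^2\,dx\gtrsim t^2$, contradicting the uniform bound $\int\rho|x|^2\,dx\le R_0^2 M$. Invoking \cite{MUK86} as a black box is legitimate --- the paper does the same before sketching the argument --- but the justification you attach to it misidentifies the mechanism and would not survive as a proof; the zero-speed-of-propagation property is the essential point you are missing.
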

%From now on, we use the convention that in the first alternative,
%$T_{\rm max}=+\infty$, and in the second one, $T_{\rm max}=T^*$.
 We investigate the following natural questions:
\begin{question}\label{q:1}
Assume that $\rho_0(x)>0$ for all $x\in \R^d$. Can we say that
$\rho^\eps(t,x)>0$ for all $x\in \R^d$ and $t\in[0,T_{\rm max})$?  If
not, what is the maximal interval allowed 
for~$t$? 
\end{question}
%We will see that if $0\le \tau <T_{\rm max}$, then for $0<\eps\le
%\eps(\tau)$ sufficiently small, the answer is yes if $x\in \T^n$
%instead of $\R^n$, for $t\in
%[0,\tau]$. The answer is similar for the Gross--Pitaevskii equation on
%$\R^n$. 
%However for the nonlinear Schr\"odinger on $\R^n$, the answer
%is negative in general. 
We will also recall that despite
the appearance, the 
presence of vacuum (existence of zeroes of $\psi^\eps$) is merely a
technical problem: the Madelung transform ceases to make sense, but
a rigorous WKB analysis  is available, regardless of the presence of
vacuum. See \S\ref{sec:bkw}. 
\begin{question}\label{q:2}
  Let $\rho_0,\phi_0\in C^\infty(\R^d)$ with $\rho_0,\nabla \phi_0\in
  H^s(\R^d)$ for some $s>d/2+1.$ Suppose  that the solution $(v,\rho)$
  to \eqref{eq:euler}  
  satisfies $\rho(t,x)>0$ for
  $(t,x)\in [0,\tau]\times\R^d$. Can we construct a solution to
  \eqref{eq:qhd} in $C([0,\tau_*];H^s)$ (possibly with
  $0<\tau_*<\tau$)? %, like for \eqref{eq:euler}? 
\end{question}
We will see that in general, the answer for this question is no. Even
though from the answer to the first question, \eqref{eq:qhd} makes
sense \emph{formally}, the analytical properties associated to
\eqref{eq:qhd} are not as favorable as for
\eqref{eq:euler}. Typically, the right-hand side of the equation for
the quantum velocity need not belong to $L^2(\R^d)$. 
\begin{question}\label{q:3}
  Let $\rho_0,\phi_0\in C^\infty(\R^d)$ with $\rho_0,\nabla \phi_0\in
  X^s(\R^d)$ for some $s>d/2+1.$ Suppose  that the solution $(v,\rho)$ to \eqref{eq:euler} 
  satisfies $\rho(t,x)>0$ for
  $(t,x)\in [0,\tau]\times\R^d$. Can we construct a solution to
  \eqref{eq:qhd} in $C([0,\tau_*];X^s)$ (possibly with
  $0<\tau_*<\tau$)?% like for \eqref{eq:euler}? 
\end{question}

%%%%%%%%%%%%%%%%%%%%%%%%%%%%%%%%%%%%%%%%%%%

\subsection{Proof of Theorem~\ref{th:euler}}
\label{sec:euler}

We shall simply give the main ideas of the proof of
Theorem~\ref{th:euler}. Complete proofs can be found in
\cite{MUK86} for the cubic defocusing NLS equation in Sobolev spaces, and in
\cite{ACIHP} for the Gross-Pitaevskii equation in  Zhidkov spaces. 

In the framework of this paper, we have
\begin{equation*}
  \nabla f(\rho)= \nabla\rho. 
\end{equation*}
Introduce formally the  auxiliary function $a=\sqrt \rho$. This nonlinear
change of variable makes \eqref{eq:euler} hyperbolic symmetric:
\begin{equation}
  \label{eq:euler2}
  \left\{
    \begin{aligned}
      &\d_t v +v\cdot \nabla v+
      2a\nabla a= 0\quad ;\quad 
      v_{\mid t=0} =\nabla\phi_0,\\
& \d_t a  + v\cdot\nabla a+\frac{1}{2}a\nabla\cdot v =0\quad ;\quad a_{\mid
      t=0}=\sqrt\rho_0.  
    \end{aligned}
\right.
\end{equation}
This system is of the form
\begin{equation*}
  \d_t u +\sum_{j=1}^d A_j(u)\d_j u =0,\text{ where } u=\left(
    \begin{array}[c]{c}
v_1\\
\vdots\\
v_d\\
a
    \end{array}
\right)\in \R^{d+1},
\end{equation*}
and the matrices $A_j$ are symmetrized by the constant multiplier
\begin{equation*}
  S =\left( 
    \begin{array}[c]{cc}
I_d &0 \\
0& 4
    \end{array}
\right).
\end{equation*}
Standard analysis (see e.g. \cite{Taylor3}) shows that \eqref{eq:euler2} has
a unique maximal solution $(v,a)$ in $C([0,T_{\rm max});Z^s)$, provided that
$s>d/2+1$ and that, in addition,
$$
  T_{\rm max}<+\infty\Longrightarrow  \int_0^{T_{\rm max}}\|(v,a)(t)\|_{W^{1,\infty}}\,dt=+\infty.
$$

We can then define $\rho$ by the linear equation
\begin{equation*}
  \d_t \rho  + \DIV \(\rho v\) =0\quad ;\quad \rho_{\mid
      t=0}=\rho_0.
\end{equation*}
By uniqueness for this linear equation, $\rho =a^2$ ($a$ is
real-valued, so $\rho$ is non-negative), and $(v,\rho)$ solves
\eqref{eq:euler}.  
\smallbreak

We now briefly explain why compactly supported initial data lead to
the formation of singularities in finite time. The first remark is
that in this case, the solution to \eqref{eq:euler} has a finite speed
of propagation, which turns out to be zero: so long as $(v,\rho)$ is
smooth, it remains supported in the same compact as its initial
data. To see this, consider the auxiliary system \eqref{eq:euler2}:
the first equation is a Burgers' equation with source term $2a\nabla
a$; the second equation is an ordinary differential equation along the
trajectories of the particles. Define the trajectory by
\begin{equation}\label{eq:ray}
  \frac{d}{dt}x(t,y)= v\(t,x(t,y)\)\quad ;\quad x(0,y)=y. 
\end{equation}
For $0\le t<T_{\rm max}$, this is a global diffeomorphism of
$\R^d$, as shown by the equation
\begin{equation*}
  \frac{d}{dt}\nabla_y x(t,y)= \nabla v\(t,x(t,y)\)\nabla_y
  x(t,y)\quad ;\quad \nabla_y x(0,y)={\rm Id},  
\end{equation*}
and Gronwall lemma. Therefore, for a smooth function $f$,
\begin{equation*}
  \(\d_t f +v\cdot \nabla f\)(t,x(t,y))= \d_t
   \(f\(t,x(t,y)\)\),
\end{equation*}
and \eqref{eq:euler2} can be viewed as a system of ordinary
differential equations. 
\smallbreak

Once the non-propagation of the support of smooth solutions is
established, the end of the proof relies on a virial computation (like
in \cite{Z,Glassey}, see also \cite{CazCourant}). This computation shows
that global in 
time smooth solutions to \eqref{eq:euler2} are dispersive (see also
\cite{Serre97}). This is incompatible with the zero propagation speed of
smooth compactly supported solutions. Therefore, singularities have to
appear in finite time.

%%%%%%%%%%%%%%%%%%%%%%%%%%%%%%%%%%%%%%%%

\subsection{A review of WKB analysis associated to (\ref{eq:nlssemi})}
\label{sec:bkw}

We consider initial data which are a little more general than in
\eqref{eq:nlssemi}, namely
\begin{equation}
  \label{eq:nlssemi2}
%  i\eps\d_t \psi^\eps +\frac{\eps^2}{2}\Delta \psi^\eps = f\(\lvert \psi^\eps\rvert^2\)\psi^\eps\quad ;\quad 
\psi^\eps(0,x)=   a_0^\eps(x)e^{i\varphi_0(x)/\eps}, 
\end{equation}
where the initial amplitude $a_0^\eps$ is assumed to be smooth,
\emph{complex-valued}, and possibly depending on $\eps$. Typically, we assume
that there exist $a_0, a_1\in Z^\infty$ independent of $\eps$ such
that 
\begin{equation}\label{eq:CIa}
  a_0^\eps = a_0+\eps a_1 +\cO(\eps^2) \text{ in }Z^s, \quad \forall
  s>d/2. 
\end{equation}

\subsubsection{First order approximation}
\label{sec:bkw1}

Introduce the solution to the quasilinear system
\begin{equation}\label{eq:eulerOG}
\left\{
  \begin{aligned}
      &\d_t \varphi + \frac{1}{2}\lvert \nabla \varphi \rvert^2 +
      f\(|a|^2\)=0 \quad ;\quad \varphi_{\mid t=0}=\varphi_0,\\
&\d_t a+\nabla \varphi\cdot \nabla a +\frac{1}{2}a\Delta \varphi=0\quad
      ;\quad a_{\mid t=0}=a_0. 
  \end{aligned}
\right.
\end{equation}
Theorem~\ref{th:euler} shows that \eqref{eq:eulerOG} has a
unique, smooth solution with $a,\nabla \varphi\in Z^\infty$. The main
remark consists in noticing that \eqref{eq:eulerOG} implies that
$(\nabla \varphi,|a|^2)$ has to solve \eqref{eq:euler} ($a_0$ may be
complex-valued): 
Theorem~\ref{th:euler} yields $v,\rho\in Z^\infty$. We can then define
$a$ as the solution to the linear transport equation
\begin{equation*}
  \d_t a+v\cdot \nabla a +\frac{1}{2}a\nabla\cdot v =0\quad
      ;\quad a_{\mid t=0}=a_0.
\end{equation*}
Now $|a|^2$ and $\rho$ solve the same linear transport equation, with
the same initial data, hence $\rho=|a|^2$. Using this information in
the equation for the velocity, define 
\begin{equation*}
  \varphi(t)=\varphi_0 -\int_0^t \(\frac{1}{2}\lvert v(\tau)\rvert^2 +
  f\(|a(\tau)|^2\)\) d\tau. 
\end{equation*}
We easily check that  $\d_t \(\nabla \varphi-v\) =\nabla \d_t \varphi-\d_t v=0$, and
that $(\varphi,a)$ solves \eqref{eq:eulerOG}. 
\smallbreak

Introduce the solution to the linearization of
\eqref{eq:eulerOG}, with an extra source term:
\begin{equation*}
  \left\{
  \begin{aligned}
    &\d_t \varphi^{(1)} +\nabla \varphi \cdot \nabla \varphi^{(1)} +
    2\RE\left(\overline a a^{(1)}\right)f'\left(
    |a|^2\right)=0\quad ;\quad \varphi^{(1)}_{\mid t=0}=0, \\
   &\d_t a^{(1)} +\nabla\varphi\cdot \nabla a^{(1)} + \nabla
   \varphi^{(1)}\cdot \nabla a + \frac{1}{2} a^{(1)}\Delta \varphi
   +\frac{1}{2}a\Delta \varphi^{(1)}      = \frac{i}{2}\Delta a\quad ;
    \quad a^{(1)}_{\mid t=0}=a_1. 
  \end{aligned}
\right.
\end{equation*}
We also check that it has a unique smooth solution, with
$a^{(1)},\nabla \varphi^{(1)}\in Z^\infty$. The main result that we will
invoke is the following:
\begin{proposition}\label{prop:bkw}
  Let $a_0^\eps,a_0,a_1,\varphi_0$ be smooth, with
  $a_0^\eps,a_0,a_1,\nabla \varphi_0\in Z^\infty$. Assume that
  \eqref{eq:CIa} holds. Then 
  \begin{equation}\label{eq:bkw}
    \psi^\eps = \( a e^{i\varphi^{(1)}}+\cO\(\eps\)\)e^{i\varphi/\eps}\text{ in
    }L^\infty([0,\tau];Z^s),\quad \forall \tau <T_{\rm max}\text{ and
    }\forall s\ge 0. 
  \end{equation}
\end{proposition}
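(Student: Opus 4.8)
The plan is to bypass the Madelung transform (which degenerates at the zeroes of $a$) and use instead the exact WKB reformulation in which the amplitude is allowed to be complex. First I would look for $\psi^\eps=a^\eps e^{i\phi^\eps/\eps}$ with $\phi^\eps$ real and $a^\eps$ complex, both $\eps$-dependent, and require the pair to solve
\begin{equation}\label{eq:greniersys}
\left\{
\begin{aligned}
&\d_t\phi^\eps+\tfrac12|\nabla\phi^\eps|^2+f(|a^\eps|^2)=0,\qquad \phi^\eps_{\mid t=0}=\varphi_0,\\
&\d_t a^\eps+\nabla\phi^\eps\cdot\nabla a^\eps+\tfrac12 a^\eps\Delta\phi^\eps=\frac{i\eps}{2}\Delta a^\eps,\qquad a^\eps_{\mid t=0}=a_0^\eps.
\end{aligned}
\right.
\end{equation}
Substituting the ansatz into \eqref{eq:nlssemi} and separating the $\cO(\eps^0)$ and $\cO(\eps)$ contributions shows that \eqref{eq:greniersys} is \emph{exactly equivalent} to \eqref{eq:nlssemi}; no approximation is made, and, crucially, nothing degenerates when $a^\eps$ vanishes. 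Setting $v^\eps=\nabla\phi^\eps$ and writing $a^\eps=a_1^\eps+ia_2^\eps$, the unknown $U^\eps=(v^\eps,a_1^\eps,a_2^\eps)$ solves a quasilinear system $\d_t U^\eps+\sum_j A_j(U^\eps)\d_j U^\eps=\tfrac\eps2\,\mathcal L U^\eps$, where $\mathcal L$ is the second-order operator sending $(a_1,a_2)$ to $(-\Delta a_2,\Delta a_1)$ and acting trivially on $v$.

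The heart of the proof is to solve \eqref{eq:greniersys} on a time interval \emph{independent of $\eps$}, namely any $[0,\tau]$ with $\tau<T_{\rm max}$, with bounds uniform in $\eps$. Here the special structure of the nonlinearities of this paper is decisive: since $\nabla f(\rho)=\nabla\rho$, one has $f'\equiv1$, so the matrices $A_j$ are symmetrized by the \emph{constant} positive-definite symmetrizer $S=\mathrm{diag}(I_d,4,4)$. Two facts then make the dispersive term harmless. First, $\mathcal L$ has constant coefficients, hence commutes with $\Lambda^s$; second, $\mathcal L$ is skew-adjoint for the $L^2$ inner product. Consequently, $S$ being constant, the contribution of $\tfrac\eps2\mathcal L U^\eps$ to $\tfrac{d}{dt}(S\Lambda^sU^\eps,\Lambda^sU^\eps)_{L^2}$ equals $\eps(S\Lambda^s\mathcal L U^\eps,\Lambda^sU^\eps)=\eps(S\mathcal L\Lambda^sU^\eps,\Lambda^sU^\eps)=0$, \emph{exactly} and with no loss of derivative (the commutator $[S,\mathcal L]$, which would cost one derivative, vanishes because $S$ is constant). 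The remaining, hyperbolic, part is estimated by the standard Friedrichs--Kato--Ponce commutator technique, giving $\tfrac{d}{dt}(S\Lambda^sU^\eps,\Lambda^sU^\eps)\le C(\|U^\eps\|_{W^{1,\infty}})\,\|U^\eps\|_{Z^s}^2$ uniformly in $\eps$. A continuation argument then yields a solution on any $[0,\tau]$, $\tau<T_{\rm max}$, with $\sup_{0<\eps\le1}\|U^\eps\|_{L^\infty([0,\tau];Z^s)}<\infty$ for every $s$; in Zhidkov spaces this is done exactly as in \cite{ACIHP}. This uniform estimate is the only genuinely delicate point; a true derivative loss through $[S,\mathcal L]$ would appear for a general $f$ with non-constant $f'$, which is precisely why the linearity of $f$ in $\rho$ matters here.

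Next I would justify the asymptotic expansion. Plugging $\phi^\eps=\varphi+\eps\varphi^{(1)}+\eps^2\phi_R^\eps$ and $a^\eps=a+\eps a^{(1)}+\eps^2a_R^\eps$ into \eqref{eq:greniersys} and matching powers of $\eps$, the $\cO(\eps^0)$ equations are exactly \eqref{eq:eulerOG} for $(\varphi,a)$, and the $\cO(\eps)$ equations are precisely the linearized system defining $(\varphi^{(1)},a^{(1)})$: the source $\tfrac i2\Delta a$ comes from $\tfrac{i\eps}2\Delta a^\eps$, and the term $2\RE(\overline a a^{(1)})f'(|a|^2)$ from the linearization of $f(|a^\eps|^2)$. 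Both profiles belong to $Z^\infty$ on $[0,T_{\rm max})$ by Theorem~\ref{th:euler} and the discussion following it. Using \eqref{eq:CIa} for the data, the remainder $(\phi_R^\eps,a_R^\eps)$ then solves a linear system of the same hyperbolic/skew-symmetric type, with coefficients and source bounded in $Z^s$ uniformly in $\eps$; the uniform energy estimate of the previous step applies verbatim and gives $\|(\nabla\phi_R^\eps,a_R^\eps)\|_{L^\infty([0,\tau];Z^s)}=\cO(1)$, whence
\begin{equation*}
\phi^\eps=\varphi+\eps\varphi^{(1)}+\cO(\eps^2),\qquad a^\eps=a+\cO(\eps)\quad\text{in }L^\infty([0,\tau];Z^s).
\end{equation*}

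Finally I would return to $\psi^\eps=a^\eps e^{i\phi^\eps/\eps}$. Dividing the phase expansion by $\eps$ gives $\phi^\eps/\eps=\varphi/\eps+\varphi^{(1)}+\cO(\eps)$, and since $\varphi^{(1)}$ together with all its derivatives is bounded, the composition and product estimates on $Z^s$ yield $e^{i\phi^\eps/\eps}=e^{i\varphi/\eps}e^{i\varphi^{(1)}}(1+\cO(\eps))$ in $Z^s$. Combining this with $a^\eps=a+\cO(\eps)$ produces
\begin{equation*}
\psi^\eps=\big(a\,e^{i\varphi^{(1)}}+\cO(\eps)\big)e^{i\varphi/\eps}\quad\text{in }L^\infty([0,\tau];Z^s),
\end{equation*}
which is \eqref{eq:bkw}. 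Note that the $\cO(\eps)$ amplitude correction $a^{(1)}$ is absorbed into the error term, whereas the phase correction $\varphi^{(1)}$ survives in the leading profile because it is amplified by the factor $1/\eps$ in the exponent; this is exactly why $\varphi^{(1)}$, and hence the whole linearized system, enters the statement.
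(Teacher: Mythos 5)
Your proof is correct and is essentially the paper's own proof (the method of Grenier \cite{Grenier98}, adapted to Zhidkov spaces in \cite{ACIHP}): write $\psi^\eps=a^\eps e^{i\varphi^\eps/\eps}$ exactly with a complex amplitude solving the coupled system, get solvability and bounds on $[0,\tau]$ uniformly in $\eps$ from the skew-adjointness of the $i\tfrac{\eps}{2}\Delta$ term, expand $a^\eps=a+\eps a^{(1)}+\mathcal{O}(\eps^2)$ and $\varphi^\eps=\varphi+\eps\varphi^{(1)}+\mathcal{O}(\eps^2)$, and reconstruct $\psi^\eps$, the division of the phase by $\eps$ accounting for the loss from $\mathcal{O}(\eps^2)$ in the data to $\mathcal{O}(\eps)$ in the conclusion. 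The only inaccuracy is your parenthetical claim that a non-constant $f'$ would force a derivative loss through $[S,\mathcal{L}]$: with Grenier's symmetrizer $S=\mathrm{diag}\bigl(\tfrac{1}{4f'(|a|^2)}I_d,\,I_2\bigr)$, whose amplitude block is constant, $S\mathcal{L}$ remains exactly skew-adjoint, which is precisely why the method covers the general defocusing nonlinearities mentioned in the paper's footnote and not only the cubic case.
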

This result was established in \cite{Grenier98}
when $Z^s=H^s(\R^d)$, and in \cite{ACIHP} when $Z^s=X^s(\R^d)$. Note
the shift between the order of the approximation between the initial
data (known up to $\cO(\eps^2)$) and the approximation (of order
$\cO(\eps)$ only): this is due to the fact that we consider a regime which
is super-critical as far as WKB analysis is concerned (see
e.g. \cite{CaBook}). In particular, the phase modulation $\varphi^{(1)}$
is a function of $\varphi_0,a_0$ and $a_1$. It is non-trivial in general,
and since we are interested here in real-valued $a_0$, we shall merely
mention two cases (see \cite[pp.~69--70]{CaBook}):
\begin{itemize}
\item If $a_1\not =0$ is real-valued, then $\varphi^{(1)}\not =0$ in general. 
\item If $a_1=0$ (or more generally if $a_1\in i\R$), then $\varphi^{(1)} =0$.
\end{itemize}
To see the first point, it suffices to notice that the equation for
$\varphi^{(1)}$ gives (recall that $f'=1$):
\begin{equation*}
  \d_t \varphi^{(1)}_{\mid t=0} = -2a_0a_1.
\end{equation*}
This shows that for \eqref{eq:madelung}--\eqref{eq:qhd} to yield a
relevant description of the solution to \eqref{eq:nlssemi}, we have to
assume $a_0=\sqrt{\rho_0}$ and $a_1=0$. Otherwise, a phase modulation
is necessary to describe $\psi^\eps$ at leading order, by
\eqref{eq:bkw}, which is 
incompatible with the form \eqref{eq:madelung}, unless the Madelung phase
$\phi^\eps$ admits a corrector of order $\eps$. 
But formal asymptotics in
\eqref{eq:qhd} give $v^\eps = v +\cO(\eps^2)=\nabla\varphi+\cO(\eps^2)$.
Hence the Madelung transform has a chance to give a relevant result only if
$a_1=0$. 
\smallbreak

To check the second point of the above assertion, we set $\alpha = \RE
\(\overline a a^{(1)}\)$. Direct computations show that
$(\varphi^{(1)},\alpha)$ solves, as soon as $a_0\in \R$ and $a_1\in i\R$:
\begin{equation*}
  \left\{
    \begin{aligned}
      &\d_t \varphi^{(1)} + \nabla \varphi\cdot\nabla \varphi^{(1)} + 2\alpha
      =0\quad ;\quad  \varphi^{(1)}_{\mid t=0}=0,\\
& \d_t \alpha +  \nabla \varphi\cdot\nabla \alpha =-\frac{1}{2}\DIV\(
      |a|^2 \nabla \varphi^{(1)}\) -\alpha \Delta \varphi\quad ;\quad
      \alpha_{\mid t=0}=0. 
    \end{aligned}
\right.
\end{equation*}
This is a linear, homogeneous system, with zero initial data, so its
solution is identically zero. 
\smallbreak

To conclude this paragraph, we briefly outline the proof of
Proposition~\ref{prop:bkw}. The approach in \cite{ACIHP} is the same
as in \cite{Grenier98}, with slightly different estimates. For
simplicity, and in view of the above discussion, we assume
$a_0^\eps=a_0$ independent of $\eps$. We write
the solution $\psi^\eps$ as $\psi^\eps =a^\eps e^{i\varphi^\eps/\eps}$
(exact formula), where we impose
\begin{equation}
  \label{eq:emmanuel}
  \left\{
  \begin{aligned}
      &\d_t \varphi^\eps + \frac{1}{2}\lvert \nabla \varphi^\eps \rvert^2 +
      f\(|a^\eps|^2\)=0 \quad ;\quad \varphi^\eps_{\mid t=0}=\varphi_0,\\
&\d_t a^\eps+\nabla \varphi^\eps\cdot \nabla a^\eps
      +\frac{1}{2}a^\eps\Delta \varphi^\eps=i\frac{\eps}{2}\Delta a^\eps \quad 
      ;\quad a^\eps_{\mid t=0}=a_0. 
  \end{aligned}
\right.
\end{equation}
Note that both $\varphi^\eps$ and $a^\eps$ depend on $\eps$, because the
right-hand side of the equation for $a^\eps$ depends on $\eps$, and
because of the coupling between the two equations. Note also that with
this approach, one abandons the possibility of considering a
real-valued amplitude $a^\eps$. 
\smallbreak

It is not hard to construct a solution to \eqref{eq:emmanuel} in
$Z^s$, for $s>d/2+2$, and then check that asymptotic expansions are
available in $Z^s$:
\begin{equation*}
  a^\eps = a+\eps a^{(1)}+\cO\(\eps^2\)\quad ;\quad \varphi^\eps = \varphi+\eps
  \varphi^{(1)} + \cO\(\eps^2\).
\end{equation*}
Back to $\psi^\eps$, this yields Proposition~\ref{prop:bkw}. We see
that the general loss in the precision (from $\cO(\eps^2)$ in the initial data
to $\cO(\eps)$ in the approximation for $t>0$) is due to the division
of $\varphi^\eps$ by $\eps$. Note finally that even though $a_1=0$, one
has $a^{(1)}\not =0$: the corrector $a^{(1)}$ solves a linear
equation, with a purely imaginary (non trivial) source term, and so
$a^{(1)}\in i\R$ is not trivial, while $\varphi^{(1)}=0$ since $\RE
\(\overline a a^{(1)}\)=0$.

%%%%%%%%%%%%%%%%%%%%%%%%%%%%%%%%%%%%

\subsubsection{Higher order approximation and formal link with quantum
  hydrodynamics} 
\label{sec:bkw2}

One can actually consider an asymptotic expansion to arbitrary order,
\begin{align*}
  a^\eps &= a+\eps a^{(1)}+\ldots +\eps^N a^{(N)}+\cO\(\eps^{N+1}\),\\
\varphi^\eps &= \varphi+\eps \varphi^{(1)}+\ldots +\eps^N
\varphi^{(N)}+\cO\(\eps^{N+1}\), 
  \quad \forall N\in \N.
\end{align*}
For $j\ge 1$, the coefficient $a^{(j)}$ is given by a linear system
for $(\varphi^{(j)},a^{(j)})$, with source terms involving
$(\varphi^{(k)},a^{(k)})_{0\le k\le j-1}$. In the case $a_0\in \R$ and
$a_1=0$, we know that $\varphi^{(1)}=0$, and $a^{(2)}$ is given by 
\begin{equation*}
  \d_t a^{(2)} + \nabla \varphi\cdot \nabla a^{(2)}
  +\frac{1}{2}a^{(2)}\Delta \varphi + \nabla \varphi^{(2)}\cdot \nabla a
  +\frac{1}{2}a\Delta \varphi^{(2)}=\frac{i}{2}\Delta a^{(1)}\quad ;\quad
  a^{(2)}_{\mid t=0}=0. 
\end{equation*}
We check that in the case $a_0^\eps=a_0\in \R$ (which includes the case 
where Madelung transform is used, $a_0\ge 0$), all the profiles $a$ and 
$a^{(2j)}$, $j\ge 1$,  are real-valued, while $a^{(2j+1)}$, $j\ge 0$, 
are purely imaginary. Moreover, $\varphi^{(2j+1)}=0$ for all $j\in
\N$. This is formally in agreement with 
\eqref{eq:qhd}: indeed, \eqref{eq:qhd} suggests that $\phi^\eps$ and
$\rho^\eps$ have asymptotic expansions of the form
\begin{equation}\label{eq:DAmad}
  \phi^\eps \approx \phi +\eps^2 \phi^{(2)}+\ldots
  +\eps^{2j}\phi^{(2j)}+\ldots\quad ;\quad 
 \rho^\eps \approx \rho +\eps^2 \rho^{(2)}+\ldots
  +\eps^{2j}\rho^{(2j)}+\ldots
\end{equation}
On the other hand, we have
\begin{align*}
  \rho^\eps=\lvert a^\eps\rvert^2 &\approx \left\lvert a+\eps
  a^{(1)}+\ldots\right\rvert^2\\
 &\approx \( a+\ldots + \eps^{2j}a^{(2j)}+\ldots \)^2 -\( \eps
  a^{(1)} +\ldots + \eps^{2j+1}a^{(2j+1)}+\ldots \)^2,
\end{align*}
since the $a^{(2j+1)}$'s are purely imaginary. This is in agreement
with the second formal asymptotics in \eqref{eq:DAmad}. We can check
similarly that  \eqref{eq:DAmad} is in agreement with the higher order
generalization of \eqref{eq:bkw}, in view of the special properties of
the $\varphi^{(j)}$'s and $a^{(j)}$'s pointed out above.  

%%%%%%%%%%%%%%%%%%%%%%%%%%%%%%%%%%%%%%%%%%

\subsection{Absence of vacuum before shocks}
\label{sec:q1}

\begin{lemma}\label{lem:novacuum}
  In Theorem~\ref{th:euler}, assume that $\rho_0(x)>0$ for all $x\in
  \R^d$ (absence of vacuum). Then $\rho>0$ on $[0,T_{\rm max})\times
  \R^d$. 
\end{lemma}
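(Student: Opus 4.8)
The plan is to exploit the symmetric hyperbolic reformulation \eqref{eq:euler2} together with the characteristics \eqref{eq:ray} already introduced in the proof of Theorem~\ref{th:euler}. Recall that $\rho = a^2$ where $a$ solves the second equation of \eqref{eq:euler2}, and that for each $0\le t<T_{\rm max}$ the flow map $y\mapsto x(t,y)$ defined by \eqref{eq:ray} is a global diffeomorphism of $\R^d$. Since $\rho$ is the square of $a$, it suffices to prove that $a$ never vanishes; and by the diffeomorphism property it is enough to show that $a(t,x(t,y))>0$ for every $y\in\R^d$ and every $t\in[0,T_{\rm max})$.

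First I would transport the equation for $a$ along the trajectories. Setting $A(t)=a(t,x(t,y))$ and differentiating, the chain rule together with \eqref{eq:ray} gives
$$\frac{d}{dt}A(t) = (\d_t a + v\cdot\nabla a)(t,x(t,y)) = -\frac12 (\DIV v)(t,x(t,y))\,A(t),$$
where the last equality is the second equation of \eqref{eq:euler2}. This is a scalar linear ODE in $A$, with no source term, which I would integrate explicitly:
$$a(t,x(t,y)) = a_0(y)\,\exp\left(-\frac12\int_0^t (\DIV v)(s,x(s,y))\,ds\right).$$

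Next I would invoke the positivity of the data and the regularity of the velocity. Since $\sqrt{\rho_0},\nabla\phi_0\in Z^s$ with $s>d/2+1$, Sobolev embedding yields $v\in C([0,\tau];W^{1,\infty}(\R^d))$ for every $\tau<T_{\rm max}$, so $\DIV v$ is bounded on $[0,\tau]\times\R^d$ and the integral above is finite for each fixed $t<T_{\rm max}$; hence the exponential factor is a finite, strictly positive number. As $a_0(y)=\sqrt{\rho_0(y)}>0$ by the no-vacuum assumption, we conclude $a(t,x(t,y))>0$. Letting $y$ range over $\R^d$ and using that $x(t,\cdot)$ is onto, this gives $a(t,\cdot)>0$ on all of $\R^d$, whence $\rho=a^2>0$ on $[0,T_{\rm max})\times\R^d$.

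The argument is essentially a Gronwall/maximum-principle statement for a transport equation with linear (hence multiplicative) zeroth-order term, so I do not expect a serious obstacle. The only point requiring a little care is the finiteness of $\int_0^t (\DIV v)(s,x(s,y))\,ds$, i.e. the $W^{1,\infty}$ control of $v$ up to (but not including) $T_{\rm max}$; this is precisely what is available on every compact subinterval of $[0,T_{\rm max})$ by the regularity $v\in C([0,T_{\rm max});Z^s)$ from Theorem~\ref{th:euler}. Note finally that the conclusion is \emph{pointwise}: in the Sobolev case $\rho$ still decays to $0$ at spatial infinity, so one cannot hope for a uniform lower bound, only for strict positivity at each point.
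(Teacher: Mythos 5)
Your proof is correct and takes essentially the same route as the paper: both reduce the density equation to a linear homogeneous ODE along the characteristics \eqref{eq:ray} (using that the flow is a global diffeomorphism up to $T_{\rm max}$) and conclude strict positivity from $\rho_0>0$. The only difference is cosmetic: the paper works with $\rho$ in Lagrangian coordinates via the Jacobian determinant, obtaining $\rho(t,x(t,y))=\rho_0(y)/J_t(y)$, whereas you integrate the equation for $a=\sqrt\rho$ into an explicit exponential; the two formulas agree by Liouville's formula $J_t(y)=\exp\bigl(\int_0^t(\DIV v)(s,x(s,y))\,ds\bigr)$.
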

\begin{proof}
  As in Section~\ref{sec:euler}, we use the fact that on $[0,T_{\rm
  max})$, the equation for the density is just an ordinary
  differential equation. Introduce the Jacobi determinant
  \begin{equation*}
    J_t(y) ={\rm det}\nabla_y x(t,y),
  \end{equation*}
where $x(t,y)$ is given by \eqref{eq:ray}. We have seen that
$J_t(y)>0$ for $(t,y)\in [0,T_{\rm max})\times
  \R^d$. Change the unknown $\rho$ to $r$, with
  \begin{equation*}
    r(t,y)= \rho\(t,x(t,y)\)J_t(y).
  \end{equation*}
Then for $0\le t<T_{\rm max}$, the continuity equation is equivalent
to: $\d_t r=0$. Therefore,
\begin{equation*}
  \rho(t,x)= \frac{1}{J_t\(y(t,x)\)} \rho_0\(y(t,x)\),
\end{equation*}
where $x\mapsto y(t,x)$ denotes the inverse mapping of $y\mapsto
x(t,y)$. 
\end{proof}
We infer:
\begin{proposition}\label{prop:novacuum}
  Under the assumptions of Theorem~\ref{th:euler}, assume that
  $\rho_0(x)>0$ for all $x\in \R^d$ (absence of vacuum). Let
  $0<T<T_{\rm max}$, and $K$ be a compact set in $\R^d$. There exists
  $\eps(T,K)>0$ such that for $0<\eps 
  \le \eps(T,K)$, $|\psi^\eps|>0$ on $[0,T]\times K$. 
\end{proposition}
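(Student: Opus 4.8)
The plan is to read off the positivity of $\lvert\psi^\eps\rvert$ directly from the WKB approximation of Proposition~\ref{prop:bkw}, combined with the no-vacuum statement for the limiting density in Lemma~\ref{lem:novacuum}. Since we are in the Madelung situation, I would apply Proposition~\ref{prop:bkw} with $a_0=\sqrt{\rho_0}$ and $a_1=0$; as explained before that proposition, this forces $\varphi^{(1)}=0$, so that
\begin{equation*}
  \psi^\eps = \( a +\cO(\eps)\) e^{i\varphi/\eps}\quad\text{in } L^\infty([0,T];Z^s),\ \forall s\ge 0,
\end{equation*}
where $a$ is the amplitude of \eqref{eq:eulerOG} and $\rho=|a|^2$ solves the Euler system \eqref{eq:euler} with datum $\rho_0$. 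Because $\lvert e^{i\varphi/\eps}\rvert=1$, taking moduli gives $\lvert\psi^\eps\rvert=\lvert a+\cO(\eps)\rvert$ pointwise, so the whole matter reduces to a uniform lower bound on $\lvert a\rvert$ and a pointwise upper bound on the remainder.

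First I would produce the lower bound for $\lvert a\rvert=\sqrt\rho$ on the prescribed compact set. Since $\rho_0>0$ everywhere, Lemma~\ref{lem:novacuum} guarantees $\rho>0$ on all of $[0,T_{\rm max})\times\R^d$; in particular $\rho$ is continuous and strictly positive on the compact set $[0,T]\times K$ (recall $T<T_{\rm max}$), hence attains a positive minimum there. I set
\begin{equation*}
  \delta := \sqrt{\min_{[0,T]\times K}\rho}>0,\qquad\text{so that}\qquad \lvert a(t,x)\rvert\ge\delta\quad\text{on }[0,T]\times K.
\end{equation*}

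Next I would convert the $\cO(\eps)$ bound, which Proposition~\ref{prop:bkw} provides in $Z^s$, into a pointwise one. Fixing any $s>d/2$ and writing $\psi^\eps=(a+r^\eps)e^{i\varphi/\eps}$, the proposition (with $\tau=T<T_{\rm max}$) yields $\Abs{r^\eps}_{L^\infty([0,T];Z^s)}\le C(T,s)\,\eps$. Since $Z^s\hookrightarrow L^\infty(\R^d)$ for $s>d/2$ (both for $H^s$ and, via Remark~\ref{rem:infini}, for $X^s$), this gives $\sup_{[0,T]\times\R^d}\lvert r^\eps\rvert\le C\eps$ with $C=C(T)$. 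The triangle inequality then gives, on $[0,T]\times K$,
\begin{equation*}
  \lvert\psi^\eps\rvert = \lvert a+r^\eps\rvert \ge \lvert a\rvert-\lvert r^\eps\rvert \ge \delta - C\eps.
\end{equation*}
Choosing $\eps(T,K):=\delta/(2C)$ yields $\lvert\psi^\eps\rvert\ge\delta/2>0$ on $[0,T]\times K$ for all $0<\eps\le\eps(T,K)$, which is the claim.

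There is no genuine obstacle here: the two substantial inputs, the WKB expansion and the no-vacuum property of the Euler density, are already in hand, and the remaining steps are a Sobolev embedding and a compactness argument. The points requiring care are bookkeeping ones: checking that Proposition~\ref{prop:bkw} applies with the Madelung data (so that $\varphi^{(1)}=0$ and the leading profile is exactly $a$ with $\lvert a\rvert^2=\rho$), and tracking that the constant $C$ in the remainder estimate, hence $\eps(T,K)$, depends on $T$ through the finite WKB lifespan and on $K$ only through $\delta$. One should also stress that the positivity is genuinely local in $(t,x)$: as $T\uparrow T_{\rm max}$ or as $K$ exhausts $\R^d$, the lower bound $\delta$ may degenerate and $\eps(T,K)$ may shrink to $0$, which is consistent with Question~\ref{q:1}.
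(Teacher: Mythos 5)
Your proof is correct and is essentially the paper's own argument: the WKB expansion of Proposition~\ref{prop:bkw} (made pointwise via the embedding $Z^s\hookrightarrow L^\infty$) gives $\lvert\psi^\eps\rvert=\lvert a\rvert+\cO(\eps)$ uniformly on $[0,T]\times\R^d$, Lemma~\ref{lem:novacuum} together with the compactness of $[0,T]\times K$ gives a positive lower bound for $a=\sqrt\rho$ there, and the triangle inequality concludes for $\eps$ small. The only cosmetic difference is that you invoke $a_1=0$ to get $\varphi^{(1)}=0$, which is not actually needed: since $\varphi^{(1)}$ is real-valued, taking moduli in \eqref{eq:bkw} removes the factor $e^{i\varphi^{(1)}}$ in any case.
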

\begin{proof}
  Proposition~\ref{prop:bkw} shows that 
  \begin{equation*}
    |\psi^\eps|= |a| +\cO(\eps) \text{ in }
     L^\infty\([0,T]\times\R^d\). 
  \end{equation*}
Note that the constant involved in this $\cO(\eps)$ depends on $T$ in
general. Recalling that $a=\sqrt \rho$, Lemma~\ref{lem:novacuum}
shows that 
$$\min_{(t,x)\in [0,T]\times K}a(t,x)= c(T,K)>0.$$
Now for $0<\eps\le \eps(T,K)\ll 1$, 
\begin{equation*}
  \left\lvert |\psi^\eps|- |a|\right\rvert \le \frac{1}{2}c(T,K) \text{ in }
     L^\infty\([0,T]\times\R^d\),
\end{equation*}
and the result follows.
\end{proof}
In the case $x\in \T^d$, this shows that before the formation of
shocks in the Euler equation, and provided that $\eps$ is sufficiently
small, the amplitude remains positive: the right-hand side of
\eqref{eq:qhd} makes sense. This point was remarked initially in
\cite{PGX93}. Note that the result of \cite{GGZ}
in the one-dimensional case $x\in [0,1]$ shows that suitable boundary
conditions 
lead to the existence of finite time blow-up for
\eqref{eq:qhd}. Therefore, the above result is qualitatively sharp
(qualitatively only, for it might happen that the solution to
\eqref{eq:qhd} remains smooth longer than the solution to
\eqref{eq:euler}). 
\smallbreak

Finally, we show that the compactness assumption in
Proposition~\ref{prop:novacuum} can be removed in the case of the
Gross--Pitaevskii equation. As regards  the nonlinear Schr\"odinger
equation on $\R^d$, this issue seems much more delicate and will not
be addressed in this paper. Assume that the Gross--Pitaevskii equation
is associated with the boundary condition at infinity
\begin{equation*}
  \lvert \psi^\eps(t,x)\rvert \Tend {|x|} \infty 1.
\end{equation*}
Such a condition is used frequently in physics, possibly with a
stronger one, of the form (see e.g. \cite{LinZhang} and references therein)
\begin{equation*}
  \psi^\eps(t,x)- e^{i v^\infty\cdot x/\eps}\Tend {|x|} \infty 0, 
\end{equation*}
for some fixed asymptotic ``velocity'' $v^\infty\in\R^d$. 
\medbreak
Now, as regards the Gross-Pitaevskii equation, 
putting together the continuity of  $\psi^\eps$ over  $[0,T]\times \R^d,$  the compactness
of the time interval $[0,T]$ and Proposition~\ref{prop:novacuum}, we get
\begin{corollary}
  Under the assumptions of Theorem~\ref{th:euler}, assume that
  $\rho_0(x)>0$ for all $x\in \R^d$ (absence of vacuum). Assume
  moreover that the Gross--Pitaevskii equation
is associated with the boundary condition at infinity\footnote{Note that this implies that there exists $c>0$ such that
$\rho_0(x)\ge c$ for all $x\in \R^d$.}
\begin{equation*}
  \lvert \psi^\eps(t,x)\rvert \Tend {|x|} \infty 1.
\end{equation*}
Let $0<T<T_{\rm max}$. There exist
  $\eps(T),c(T)>0$ such that 
  $$|\psi^\eps|\ge c(T)\ \hbox{ on }\ [0,T]\times \R^d, \quad\hbox{for all  }\  0<\eps \le \eps(T).
  $$ 
\end{corollary}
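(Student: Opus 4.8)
The plan is to upgrade the compact-set bound of Proposition~\ref{prop:novacuum} to one that is uniform in $x\in\R^d$, exploiting the single extra ingredient furnished by the boundary condition at infinity: as recorded in the footnote, $\abs{\psi^\eps(0,\cdot)}=\sqrt{\rho_0}\to1$ at infinity forces a \emph{uniform} lower bound $\rho_0\ge c>0$ on all of $\R^d$ (positivity and continuity handle the complementary ball). My strategy is to propagate this uniform lower bound to the whole strip $[0,T]\times\R^d$ at the level of the limiting Euler density $\rho$, and only then to transfer it to $\abs{\psi^\eps}$ via the WKB approximation. In contrast to Proposition~\ref{prop:novacuum}, this uses neither compactness in $x$ nor the boundary condition at any time $t>0$.

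First I would propagate the lower bound on $\rho$. By the Lagrangian computation in Lemma~\ref{lem:novacuum}, for $0\le t<T_{\rm max}$,
\begin{equation*}
  \rho(t,x)=\frac{\rho_0\(y(t,x)\)}{J_t\(y(t,x)\)},\qquad J_t(y)={\rm det}\,\nabla_y x(t,y),
\end{equation*}
so it suffices to bound $J_t$ from above uniformly on $[0,T]\times\R^d$. Differentiating \eqref{eq:ray} gives $\frac{d}{dt}\nabla_y x=\nabla v(t,x(t,y))\,\nabla_y x$, whence Gronwall's lemma yields $\Abs{\nabla_y x(t,y)}\le\exp\(\int_0^T\|\nabla v(\tau)\|_{L^\infty}\,d\tau\)$ uniformly in $y$. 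Since $v\in C([0,T];Z^s)$ with $s>d/2+1$, the embedding $H^{s-1}\hookrightarrow L^\infty(\R^d)$ (valid as $s-1>d/2$) together with continuity in time makes $M(T):=\int_0^T\|\nabla v(\tau)\|_{L^\infty}\,d\tau$ finite for $T<T_{\rm max}$. Using $\abs{\det A}\le\Abs{A}^d$ for $d\times d$ matrices, we obtain $0<J_t(y)\le e^{dM(T)}=:C(T)$, and with $\rho_0\ge c$ this gives
\begin{equation*}
  \rho(t,x)\ge \frac{c}{C(T)}=:c_1(T)>0\qquad\text{on }[0,T]\times\R^d.
\end{equation*}

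It remains to pass from $\rho$ to $\abs{\psi^\eps}$. Taking $s$ large enough that $Z^s\hookrightarrow L^\infty(\R^d)$, Proposition~\ref{prop:bkw} gives $\abs{\psi^\eps}=a+\cO(\eps)$ in $L^\infty([0,T]\times\R^d)$, where $a=\sqrt\rho\ge\sqrt{c_1(T)}$ by the previous step and where I use that the unimodular factor $e^{i\varphi^{(1)}}$ leaves $\abs a$ unchanged. Hence there is $C_0(T)$ with $\big\vert\,\abs{\psi^\eps}-a\,\big\vert\le C_0(T)\,\eps$ uniformly on the strip, and choosing $\eps(T)$ so that $C_0(T)\eps(T)\le\tfrac12\sqrt{c_1(T)}$ yields $\abs{\psi^\eps}\ge\tfrac12\sqrt{c_1(T)}=:c(T)$ on $[0,T]\times\R^d$ for every $0<\eps\le\eps(T)$, which is the claim.

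The main obstacle is exactly the step of making the bound uniform in $x$, and the whole argument hinges on the uniform positivity of $\rho_0$. That is precisely what the Gross--Pitaevskii condition at infinity supplies, and what fails for the cubic NLS on $\R^d$, where finite-energy data force $\rho_0$ to decay at infinity: the Lagrangian formula then only delivers $\rho(t,x)\ge\rho_0(y(t,x))/C(T)$ with no uniform positive floor, which is why the unbounded setting is genuinely harder there. Two minor points remain to be checked, both immediate: that $J_t>0$ (established in the diffeomorphism argument preceding Lemma~\ref{lem:novacuum}), and that the estimate of Proposition~\ref{prop:bkw} is global in $x$, which follows from the same Sobolev embedding.
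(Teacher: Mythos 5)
Your argument is correct, but it follows a genuinely different route from the paper's. The paper deduces the corollary directly from Proposition~\ref{prop:novacuum} in one sentence: it invokes the boundary condition $\lvert\psi^\eps(t,x)\rvert\to1$ as $\lvert x\rvert\to\infty$ at \emph{positive} times, together with the continuity of $\psi^\eps$ on $[0,T]\times\R^d$ and the compactness of the time interval, to control the far-field region, and then applies the compact-set lower bound of Proposition~\ref{prop:novacuum} on the remaining bounded region. You instead use the boundary condition only at $t=0$ (through the footnote's bound $\rho_0\ge c$), propagate this floor to the Euler density on all of $[0,T]\times\R^d$ via the Lagrangian formula of Lemma~\ref{lem:novacuum} combined with a Gronwall/Jacobian estimate ($J_t\le e^{dM(T)}$), and conclude with the global-in-$x$ $L^\infty$ error bound of Proposition~\ref{prop:bkw}; in effect you re-prove Proposition~\ref{prop:novacuum} with $K=\R^d$ rather than citing it. What your route buys: an explicit constant $c(T)=\tfrac12\sqrt{c\,e^{-dM(T)}}$; no need to know that the decay of $\lvert\psi^\eps\rvert$ to $1$ at spatial infinity is uniform in $t\in[0,T]$, nor (more delicately) that the compact set outside which $\lvert\psi^\eps\rvert$ stays close to $1$ can be chosen independently of $\eps$ --- two points the paper's one-line proof leaves implicit; and a transparent explanation of why the statement fails for cubic NLS on $\R^d$, where $\rho_0$ has no positive floor. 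What the paper's route buys is brevity, since it reuses Proposition~\ref{prop:novacuum} as a black box instead of reopening its proof.
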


%%%%%%%%%%%%%%%%%%%%%%%%%%%%%%%%%%%%%%%%%%%%%%%%%

\subsection{Functional spaces associated to the Madelung transform}
\label{sec:q2}

It is rather easy to see that the answer to Question~\ref{q:2} is no,
in general. Consider for $\rho_0$ the function in the Schwartz class
\begin{equation*}
  \rho_0(x)=e^{-\lvert x\rvert^{2k}},\quad k\geq1.
\end{equation*}
At time $t=0$, the quantum pressure (right-hand side of
\eqref{eq:qhd}) grows like $|x|^{2k-1}$ hence  the velocity $v^\eps$ has no
chance to belong to $H^s$ for general initial data in $H^\infty$. Thus,
working in Sobolev spaces for 
general initial data does not make  sense for \eqref{eq:qhd}, while the
results in \cite{Grenier98} show that it is a fairly reasonable
framework to study the semi-classical limit of \eqref{eq:nlssemi}.  
\smallbreak

On the other hand, like for the absence of vacuum, the answer to
Question~\ref{q:3} is positive, at least if we consider some special
boundary conditions at infinity.

%%%%%%%%%%%%%%%%%%%%%%%%%%%%%%%%%%%%%%%%%%%%%%%%%
%%%%%%%%%%%%%%%%%%%%%%%%%%%%%%%%%%%%%%%%%%%%%%%%%%

\section{Solving the QHD system by a direct
  approach}\label{sec:qhddirect}

In the present section, we describe an efficient method to 
solve directly the hydrodynamic  form of \eqref{eq:nlssemi}
 given by \eqref{eq:qhd}, once performed   
 the  Madelung transform.   
 This method enables us to  study the corresponding 
 initial value problem for  \eqref{eq:qhd} with  data $(v_0,\rho_0)$
 such that $(v_0,\nabla\rho_0)$ has a high order Sobolev regularity and 
  $\rho_0$ is
\emph{positive and bounded away from zero.}
In addition to local-in-time well-posedness results, we get 
 (see Theorems \ref{th:qhd} and \ref{th:qhdbis} below) nontrivial lower bounds
on the first appearance of a zero for the solution, 
 which  are of particular interest for the study of long-wavelength asymptotics 
 if the data are  a  perturbation of a constant state of modulus one.

We here closely follow the approach that has been initiated in \cite{BDS08}. 
To help the reader to compare the present results
with those of the previous section however, we keep on using the semi-classical scaling
given by \eqref{eq:nlssemi} (whereas $\eps=1$ in \cite{BDS08}). 

The use of a suitable  \emph{extended formulation} for \eqref{eq:qhd}
and of weighted Sobolev estimates will be the key 
to our approach. Let us stress that, 
 recently,  similar \emph{extended formulations} 
 have proved to be efficient in other contexts
 for both numerical (see  \cite{CDJLS}) and theoretical purposes. 
 As a matter of fact, in the last paragraph of this section, 
 we shall briefly explain how our approach based on such an extended formulation
 carries over  to the more complicated
 case of Korteweg fluids.

\subsection{Solving the QHD system by means of an extended formulation}

The ``improved'' WKB method that has been described in the previous section
 amounts to writing 
the sought solution $\psi^\eps$ as: $$\psi^\eps=a^\eps e^{i\varphi^\eps/\eps}$$
for some complex valued function $a^\eps$ and real valued function $\varphi^\eps.$  
In this section, we rather start from  the Madelung transform
 $$ \psi^\eps= \sqrt{\rho^\eps}e^{i\phi^\eps}\quad\hbox{where }\ \sqrt{\rho^\eps}=|\psi^\eps|, $$
 then write
 $$
 \psi^\eps=e^{i\Phi^\eps/\eps}\ \hbox{ with }\ 
  \Phi^\eps=\phi^\eps-i\frac\eps2\log\rho^\eps,
 $$
 and consider the redundant system that is satisfied by both $\rho^\eps$ and 
 $z^\eps=\nabla\Phi^\eps=v^\eps+iw^\eps$
 with $v^\eps=\nabla\phi^\eps$ and $w^\eps=-\frac\eps{2\rho^\eps}\nabla\rho^\eps.$
\medbreak
In order to obtain the system for $z^\eps,$ we first
 differentiate the density equation in \eqref{eq:qhd}. This yields
 $$
 \partial_tw^\eps+\nabla(v^\eps\cdot w^\eps)=\frac\eps2\nabla\DIV v^\eps.
 $$
  \smallbreak
 Next,  we notice  that 
 $$
 \frac{\eps^2}2\frac{\Delta\sqrt{\rho^\eps}}{\sqrt{\rho^\eps}}=
 %K(\rho)\Delta\rho+\frac12K'(\rho)|\nabla\rho|^2=
 -\frac\eps2\DIV w^\eps+\frac12|w^\eps|^2.
 $$
 In consequence, the equation for $v^\eps$  rewrites 
 $$
 \partial_tv^\eps+v^\eps\cdot\nabla v^\eps-\frac12\nabla|w^\eps|^2
 +\frac\eps2\nabla \DIV w^\eps+\nabla f(\rho^\eps)=0.
 $$
 Of course, $z^\eps$ is a potential vector-field, hence
    $\nabla\DIV z^\eps=\Delta z^\eps$ so that we eventually get 
    the following  ``extended" system for  $(\rho^\eps,z^\eps)$:
 \begin{equation}\label{eq:extended}
\left\{\begin{array}{l}
\displaystyle\partial_tz^\eps+\frac12\nabla(z^\eps\cdot z^\eps)+\nabla f(\rho^\eps)=i\frac{\eps}2\Delta z^\eps,\\[1ex]
 \partial_t\rho^\eps+\DIV(\rho^\eps v^\eps)=0,
 \end{array}\right.
 \end{equation}
 where we agree that  $a\cdot b:=\displaystyle\sum_{j=1}^d a_j b_j$
 for $a$ and $b$ in $\C^d.$

 \smallbreak
 Let us now explain how Sobolev estimates may be 
 derived for $(\rho^\eps,z^\eps)$ in the case where $\rho^\eps=1+b^\eps$ for some $b^\eps$ going
 to $0$ at infinity, and $f(\rho^\eps)=\rho^\eps$
 (to simplify).
 The following computations are borrowed from \cite{BDS08}.
 For notational simplicity, we omit the superscripts $\eps.$
   
In order to get the basic energy estimate,  we compute\footnote{The method 
may seem uselessly complicated. However, the algebraic cancellations
that are going to be used 
remain the same when estimating higher order Sobolev norms.}:
$$
\frac d{dt}\int_{\R^d}\bigl((1+b)|z|^2+b^2\bigr)=
2\biggl(\underbrace{\int_{\R^d}(1+b)\langle z,\partial_t z\rangle}_{I_1}
+\underbrace{\int_{\R^d}  b\,\partial_tb}_{I_2}\biggr)
+\underbrace{\int_{\R^d}\partial_tb\,|z|^2}_{I_3}
$$
where the notation $\langle a,b\rangle=\dis\sum_{j=1}^d \RE a_j\RE b_j
+\IM a_j\IM b_j$ has been used in $I_1.$
\smallbreak
Further computations yield $I_1=I_{1,1}+I_{1,2}+I_{1,3}+I_{1,4}+I_{1,5}$
and $I_2=I_{2,1}+ I_{2,2}$
with
\begin{align*}
&I_{1,1} = -\int \langle z, \nabla b\rangle,
&&I_{2,1} = -\int  b\,\DIV v,\\
&I_{1,2} = -\int b \langle z, \nabla b\rangle,
&&I_{2,2} =- \int  b\, \DIV(bv).\\
&I_{1,3} = \int \langle  z, i\frac\eps2\Delta z\rangle,\\
&I_{1,4} = \int b \langle  z,i\frac\eps2\Delta z\rangle,\\
&I_{1,5} =-\frac12 \int \rho\langle z,
\nabla(z\cdot z)\rangle,
\end{align*}
Using obvious integrations by parts we readily get
$$
I_{1,1}+I_{2,1}=0,\quad I_{1,2}+I_{2,2}=0\ \hbox{ and }\ I_{1,3}=0.
$$
Therefore, integrating by parts in $I_{1,4}$ also, we get
$$
\frac d{dt}\int\bigl((1+b)|z|^2+b^2\bigr)=-\int\eps\langle z,i\nabla z\cdot\nabla b\rangle
-2\int\rho\langle z,\nabla z\cdot z\rangle
+\int \partial_tb\,|z|^2.
$$
For ``general" functions $b$ and $z,$ the appearance 
of the terms $\nabla b$ and $\nabla z$ would preclude any attempt
to ``close" the estimates. In our case however, as the algebraic 
relation $-\eps\nabla b=2\rho w$ holds true, one may avoid this loss of one derivative
for one may write
$$
-\eps\langle z,i\nabla z\cdot\nabla b\rangle-2\rho\langle z,\nabla z\cdot z\rangle
=-2\rho\langle z,\nabla z\cdot v\rangle.
$$
Now, integrating by parts an ultimate time, we conclude that
$$
\frac d{dt}\int\bigl(\rho|z|^2+b^2\bigr)=-\int\rho v\cdot\nabla|z|^2+\int\partial_t b\,|z|^2=0.
$$ 
Hence,  $\displaystyle\int\bigl(\rho|z|^2+b^2\bigr)$ is a conserved quantity. 
\smallbreak

The same algebraic cancellations may be used 
for getting higher order Sobolev (or Besov) estimates. 
Indeed consider an ``abstract" pseudo-differential operator
$A(D)$  (for instance a differential,  
 a fractional derivatives  or a spectral localization operator).
Then one may write 
$$\displaylines{
\frac d{dt}\int\bigl((1+b)|A(D)z|^2+(A(D)b)^2\bigr)\hfill\cr\hfill=
2\biggl(\underbrace{\int(1+b)\langle A(D) z,\partial_t A(D)z\rangle}_{I_1}
+\underbrace{\int A(D) b\,\partial_t A(D)b}_{I_2}\biggr)
+\underbrace{\int\partial_tb\,|A(D)z|^2}_{I_3}.}
$$
We notice that $I_1=I_{1,1}+I_{1,2}+I_{1,3}+I_{1,4}+I_{1,5}$
and $I_2=I_{2,1}+ I_{2,2}$
with
\begin{align*}
&I_{1,1} = -\int \langle A(D) z, \nabla A(D)b\rangle,
&&I_{2,1} = -\int  A(D) b\,\DIV A(D)v,\\
&I_{1,2} = -\int b \langle A(D)z, \nabla A(D)b\rangle,
&&I_{2,2} =- \int  A(D) b\, A(D)\DIV(bv).\\
&I_{1,3} = \int \langle A(D)z, i\frac\eps2\Delta A(D)z\rangle,\\
&I_{1,4} = \int b \langle  A(D)z,i\frac\eps2\Delta A(D) z\rangle,\\
&I_{1,5} =-\frac12 \int \rho\langle A(D)z,A(D)\nabla (z\cdot z)\rangle,
\end{align*}
As above,  obvious integrations by parts ensure that
$I_{1,1}+I_{2,1}=0\ \hbox{ and }\ I_{1,3}=0.$
Next, using again integrations by parts, we notice that
$$
I_{1,2}+I_{2,2}=\int A(D)b\,\DIV[b,A(D)]v.
$$
Finally, integrating by parts in $I_{1,4}$
and using the fact that  $-\eps\nabla b=2\rho w$
yields
$$
I_{1,4}=\int\langle A(D)z,i(\nabla A(D) z)\cdot(\rho w)\rangle,
$$
and we have
$$
I_{1,5}=-\int\!\langle A(D)z, A(D)\nabla z\cdot (\rho v+i\rho w)\rangle 
+\!\int\!\!\rho\langle A(D)z,\nabla A(D)z\cdot z-A(D)(\nabla z\cdot z)\rangle. 
$$
Therefore, using the fact that $\partial_t\rho+\DIV(\rho v)=0,$ we conclude that
$$
2(I_{1,4}+I_{1,5})+I_3=
2\int\rho\langle A(D)z,\nabla A(D)z\cdot z-A(D)(\nabla z\cdot z)\rangle.
$$
Putting all the above equalities together, we thus get
$$\displaylines{
\frac12\frac d{dt}\int\bigl(\rho|A(D)z|^2+(A(D)b)^2\bigr)
=\int A(D)b\,\DIV[b,A(D)]v\hfill\cr\hfill+
\int\rho\langle A(D)z,\nabla A(D)z\cdot z-A(D)(\nabla z\cdot z)\rangle.}
$$
If, say,  $A(D)$ is a fractional  derivatives operator, then 
one may show by means of classical commutator estimates that the right-hand side
may be bounded by 
$$
C\|\rho\|_{L^\infty}\|(Db,Dz)\|_{L^\infty}\|(A(D)b,A(D)z)\|_{L^2}^2.
$$
Therefore,
$$\frac d{dt}\int\bigl(\rho|A(D)z|^2+(A(D)b)^2\bigr)\le C\|\rho\|_{L^\infty}
\|(Db,Dz)\|_{L^\infty}\|(A(D)b,A(D)z)\|_{L^2}^2.
$$ 
Denoting by $E_A^2(t)$ the left-hand side and resorting to Gronwall lemma,
we thus get 
\begin{equation}\label{eq:EA}
E_A(t)\le E_A(0)\exp\biggl(C\int_0^t\|\rho\|_{L^\infty}\|\rho^{-1}\|_{L^\infty}
\|(Db,Dz)\|_{L^\infty}\,d\tau\biggr).
\end{equation}
 It is now clear that whenever 
$Db$ and $Dz$ are bounded in $L^1([0,T];L^\infty)$
and $\rho$ is bounded from below and from above then 
we get a control of $A(D)b$ and $A(D)z$ in $L^\infty([0,T];L^2).$

Taking  $A(D)=\Lambda^s$  and performing a time integration,
\eqref{eq:EA} implies that 
\begin{equation}\label{eq:EAexplo}
\|(b,z)(t)\|_{H^s}\leq
C\biggl(\|(b_0,z_0)\|_{H^s}+\int_0^T\|(Db,Dz)\|_{L^\infty}
\|(b,z)(t)\|_{H^s}\,dt\biggr) 
\end{equation}
for some constant $C=C(s,d,\|\rho^{\pm1}\|_{L^\infty}).$
\smallbreak

So assuming that $s>1+d/2$ and using Sobolev embedding and Gronwall's
inequality,  
one may conclude by elementary methods to the following statement.
\begin{theorem}\label{th:qhd}
Let $s>1+d/2.$ Assume that $\rho_0=1+b_0$ for some $b_0\in H^{s+1}(\R^d)$
such that $1+b_0>0,$ and that $v_0\in H^s(\R^d).$
Then there exists a time 
$$T\ge T_0:=\frac C{\|b_0\|_{H^s}+\eps\|\nabla
  b_0\|_{H^s}+\|v_0\|_{H^{s+1}}}\quad\hbox{with }\  
C=C(s,d,\|\rho_0^{\pm1}\|_{L^\infty})$$ such 
that $\eqref{eq:qhd}$ has a unique solution $(v^\eps,\rho^\eps)$ on $[0,T]\times\R^d$
with $\rho^\eps=1+b^\eps$ bounded away from $0$ and 
$(v^\eps,b^\eps)\in C([0,T];H^{s}\times H^{s+1})\cap C^1([0,T];H^{s-2}\times H^{s-1}).$
\end{theorem}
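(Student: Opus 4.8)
The plan is to construct the solution from the extended system \eqref{eq:extended}, for which the $L^2$-type energy identities have just been derived, and then to transfer the result back to \eqref{eq:qhd}. The driving estimate is the a priori bound \eqref{eq:EAexplo}: since $s>1+d/2$, the Sobolev embedding $H^{s-1}\hookrightarrow L^\infty$ lets one dominate $\|(Db,Dz)\|_{L^\infty}$ by $\|(b,z)\|_{H^s}$, so \eqref{eq:EAexplo} feeds a Riccati-type differential inequality for $\|(b,z)(t)\|_{H^s}$; integrating it gives a bound on an interval $[0,T_0]$ whose length is controlled from below by the reciprocal of the initial norm, of the announced form (using $z_0=v_0+iw_0$ with $w_0=-\frac\eps{2\rho_0}\nabla b_0$ to relate $\|z_0\|_{H^s}$ to $\|v_0\|_{H^s}$ and $\eps\|\nabla b_0\|_{H^s}$). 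To make this rigorous I would introduce a Friedrichs regularization: the spectral truncation $J_n=\mathbf 1_{\{|D|\le n\}}$ turns \eqref{eq:extended} into an ODE in $L^2$ (the dispersive term $i\frac\eps2\Delta J_nz$ becoming bounded), solvable by Cauchy--Lipschitz, and I would replay the computation leading to \eqref{eq:EA}--\eqref{eq:EAexplo} on the truncated system to obtain bounds uniform in $n$ on a common interval $[0,T_0]$.

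The crucial point, and what I expect to be the main obstacle, is that \eqref{eq:EAexplo} only closes thanks to the exact algebraic identity $-\eps\nabla b=2\rho w$, which is precisely what cancels the apparently top-order term $\int\eps\langle z,i\nabla z\cdot\nabla b\rangle$ produced by $I_{1,4}$; this identity holds by construction for genuine solutions but need not survive a naive truncation, so the regularization has to be set up to respect it (for instance by slaving the imaginary part of the truncated unknown to the truncated density, or by checking that the commutators created by $J_n$ are genuinely of lower order) --- otherwise the loss of one derivative reappears and the uniform bounds break down. Once uniform $H^s$ bounds on $(z_n,b_n)$ are secured, I would pass to the limit: the difference of two regularized solutions is estimated in the lower norm $L^2$, again using the skew-adjointness of the dispersive term and the same cancellation, to show that $(z_n,b_n)$ is Cauchy in $C([0,T_0];L^2)$; interpolating this against the uniform $H^s$ bound yields convergence in $C([0,T_0];H^{s'})$ for every $s'<s$, enough to pass to the limit in each term of \eqref{eq:extended}. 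Weak-$*$ compactness and standard continuity-in-top-norm arguments then upgrade the limit to $z\in C([0,T_0];H^s)\cap C^1([0,T_0];H^{s-2})$, and correspondingly $b\in C([0,T_0];H^{s+1})\cap C^1([0,T_0];H^{s-1})$, since $\nabla b$ is slaved to $w=\IM z$.

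It remains to return from \eqref{eq:extended} to \eqref{eq:qhd}, which amounts to propagating the two structural constraints carried by Madelung data. First, $z=\nabla\Phi$ must stay curl-free: applying $\partial_j(\cdot)_k-\partial_k(\cdot)_j$ to the $z$-equation annihilates the gradient terms and leaves $\partial_t(\partial_jz_k-\partial_kz_j)=i\frac\eps2\Delta(\partial_jz_k-\partial_kz_j)$, a free Schr\"odinger equation with zero data, so $\curl z\equiv0$ and both $v=\RE z$ and $w=\IM z$ are gradients --- which is exactly what makes the real part of \eqref{eq:extended} coincide with the momentum equation of \eqref{eq:qhd}. Second, the relation $w=-\frac\eps{2\rho}\nabla\rho$ must persist: a direct computation from $\partial_t\rho+\DIV(\rho v)=0$ shows that $\widetilde w:=-\frac\eps{2\rho}\nabla\rho$ solves the same equation $\partial_t\widetilde w+\nabla(v\cdot\widetilde w)=\frac\eps2\nabla\DIV v$ as does the imaginary part $w=\IM z$ of \eqref{eq:extended}; since $w$ and $\widetilde w$ coincide at $t=0$, a standard uniqueness estimate for this linear transport equation forces $w\equiv\widetilde w$. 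With both constraints in hand, $(v,\rho)=(\RE z,1+b)$ solves \eqref{eq:qhd}; moreover, by the characteristic-formula argument of Lemma~\ref{lem:novacuum} applied to the present velocity $v$ (whose Jacobian along \eqref{eq:ray} is bounded above and below on $[0,T_0]$, as $\nabla v\in L^\infty$), the density $\rho$ stays bounded away from $0$, after possibly shrinking $T_0$ by a factor depending only on $\|\rho_0^{\pm1}\|_{L^\infty}$. Uniqueness follows from the very same $L^2$ difference estimate used in the convergence step.
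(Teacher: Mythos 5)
Your proposal takes essentially the same route as the paper: the paper's proof of Theorem~\ref{th:qhd} consists precisely of the extended formulation \eqref{eq:extended}, the weighted energy estimates \eqref{eq:EA}--\eqref{eq:EAexplo} that close only thanks to the cancellation $-\eps\nabla b=2\rho w$, and then Sobolev embedding plus Gronwall, with the remaining construction details delegated to \cite{BDS08} (``one may conclude by elementary methods''). Your Friedrichs regularization compatible with the algebraic constraint, the $L^2$-difference/interpolation compactness step, and the propagation of $\curl z=0$ and of $w=-\frac{\eps}{2\rho}\nabla\rho$ back to \eqref{eq:qhd} are exactly those standard missing details, and they are carried out correctly.
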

\begin{remark} Combining basic energy estimates for the wave equation with the above result, 
one may control the discrepancy between $(b,v)$ and the solution to the
acoustic wave equation 
 \begin{equation}\label{eq:acoustic}
\left\{\begin{array}{l}
 \partial_t\dot v+\nabla\dot  b=0\qquad;\qquad \dot v_{|t=0}=v_0,\\[1.5ex]
  \partial_t\dot b+\DIV \dot v=0\qquad;\qquad \dot b_{|t=0}=b_0.
 \end{array}\right.
 \end{equation}
We have, up to time $T_0,$
$$
\|(v^\eps-\dot v,b^\eps-\dot b)(t)\|_{H^{s-2}}\leq C\bigl(t\|(b_0,u_0)\|_{H^{s+1}\times H^s}^2
+\eps^2t\|(b_0,u_0)\|_{H^{s+1}\times H^s}\bigr).
$$

Note also that \eqref{eq:EAexplo} provides a blow-up criterion involving 
the $W^{2,\infty}$ norm of $b$ and the Lipschitz norm of $v.$ 
In particular, this implies that for given data in $H^s$ ($s>1+d/2$), 
the lifespan in $H^s$ is the same as the lifespan in $H^{s'},$ for any $1+d/2<s'<s.$ 
\end{remark}

%%%%%%%%%%%%%%%%%%%%%%%%%%%%%%%%%%%%%%%%%%%%%%%

\subsection{Dispersive properties  
and improved lower bounds for the lifespan}

The system for $(b,v)$ reads
 \begin{equation}\label{eq:qhdbv}
\left\{\begin{array}{l}
 \partial_tv+v\cdot\nabla v+\nabla b=\displaystyle\frac{\eps^2}4\nabla\Bigl(\frac1\rho\Delta b
 -\frac{1}{2\rho^2}|\nabla b|^2\Bigr),\\[1ex] \partial_tb+\DIV v=-\DIV(bv).
 \end{array}\right.
 \end{equation}
Therefore the linearized system about $(0,0)$ is not \eqref{eq:acoustic} but rather
 \begin{equation}\label{eq:qhdlinear}
\left\{\begin{array}{l}
 \partial_t\dot v+\nabla\dot b=\displaystyle\frac{\eps^2}4\nabla\Delta\dot b,\\[1ex]
  \partial_t\dot b+\DIV \dot v=0.
 \end{array}\right.
 \end{equation}
 A straightforward  spectral analysis (based on the Fourier transform) shows that 
 the above linear system behaves as the wave equation
 with speed $1$ for frequencies small with respect to $1/\eps,$
 and as the Schr\"odinger equation with coefficient $\eps/2$
 in  the high frequency regime. In fact, in dimension $d\ge2,$
it is possible to  prove  Strichartz inequalities 
(related to the wave  and  Schr\"odinger equations for low  and high
frequencies, respectively) 
for \eqref{eq:qhdlinear}.
  In the case of small data $(b_0,v_0),$  these inequalities  allow to
  improve the lower bound   for the lifespan (see also \cite{Danchin02}
 where a similar idea has been used in the context of the incompressible limit
 for compressible flows).  
 For the sake of simplicity, let us just state the result in dimension $d\ge4$
 (the reader is referred to \cite{BDS08} for the case $d=2,3$ and for more
 details concerning the approximation of the solution by \eqref{eq:qhdlinear}):
 \begin{theorem}\label{th:qhdbis}
 Under the assumptions of Theorem $\ref{th:qhd}$ with $s>2+d/2,$ then 
 the lifespan $T$ may be bounded from below by 
 $$
 T_1:=\frac C{(\|b_0\|_{H^s}+\eps\|\nabla
  b_0\|_{H^s}+\|v_0\|_{H^{s+1}})^2},
 $$
 and the discrepancy between $(b,v)$ and the solution $(\dot b,\dot v)$
 to $\eqref{eq:qhdlinear}$ with the same data
 may be bounded in terms of $t$ and of the data, up to time $T_1.$
   \end{theorem}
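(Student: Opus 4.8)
The guiding principle is that the lower bound $T_0\sim 1/M_0$ of Theorem~\ref{th:qhd}, with $M_0:=\|b_0\|_{H^s}+\eps\|\nabla b_0\|_{H^s}+\|v_0\|_{H^{s+1}}$, comes from the crude control
$$\int_0^T\|(Db,Dz)\|_{L^\infty}\,dt\lesssim T\sup_{[0,T]}\|(b,z)\|_{H^s}\lesssim T\,M_0$$
of the Gronwall exponent in \eqref{eq:EAexplo}, obtained from Sobolev embedding (recall $s>1+d/2$). The plan is to keep the energy inequality \eqref{eq:EAexplo} untouched but to replace this crude bound by a dispersive one. If one proves that the space--time norm $\|(Db,Dz)\|_{L^2([0,T];L^\infty)}$ is controlled by $M_0$ (up to the nonlinear corrections), then H\"older in time gives $\int_0^T\|(Db,Dz)\|_{L^\infty}\,dt\lesssim T^{1/2}M_0$, so that the Gronwall factor stays bounded as long as $T^{1/2}M_0\lesssim1$, that is $T\lesssim M_0^{-2}$. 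This is exactly $T_1$.

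The first step is to establish Strichartz estimates for \eqref{eq:qhdlinear} (equivalently, for the Schr\"odinger-type $z$-equation obtained by linearizing \eqref{eq:extended} about $\rho=1$, since $w=-\tfrac{\eps}{2}\nabla b$ at leading order). As $\dot v$ remains a gradient along the flow, a Fourier diagonalization reduces \eqref{eq:qhdlinear} to a scalar half-wave-type evolution carrying the Bogoliubov dispersion relation $\omega(\xi)=|\xi|\bigl(1+\tfrac{\eps^2}{4}|\xi|^2\bigr)^{1/2}$. I would then split the solution by a Littlewood--Paley decomposition at the threshold $|\xi|\sim1/\eps$: on the low-frequency part $\omega(\xi)\simeq|\xi|$ and the flow is governed by the wave equation of speed~$1$, while on the high-frequency part $\omega(\xi)\simeq\tfrac{\eps}{2}|\xi|^2$ and the flow is governed by the Schr\"odinger equation of coefficient $\eps/2$. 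On each regime the corresponding dispersive estimate holds, and a $TT^*$ argument yields Strichartz inequalities; the hypothesis $d\ge4$ is precisely what makes the endpoint pairs admissible for both the wave and the Schr\"odinger propagators, so that one may take $L^2$ integrability in time. Combining these with the Sobolev embedding of the endpoint Strichartz space into $L^\infty$ (here $s>2+d/2$ supplies the required extra derivatives) and tracking the powers of $\eps$ produced by the parabolic rescaling in the high-frequency regime, one gets for the linear flow the bound $\|D(\dot b,\dot z)\|_{L^2([0,T];L^\infty)}\lesssim M_0$.

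The second step is a bootstrap for the full system \eqref{eq:qhdbv}. Writing it in Duhamel form around the semigroup of \eqref{eq:qhdlinear}, with the quadratic terms $v\cdot\nabla v$, $\DIV(bv)$ and the genuinely nonlinear part of the quantum pressure as a source, I would run a continuity argument on $[0,T]$ under the two a priori bounds $\|(b,z)\|_{L^\infty_tH^s}\le2M_0$ and $\|D(b,z)\|_{L^2_tL^\infty}\le2C\,M_0$. The energy bound is closed by feeding the Strichartz a priori bound into \eqref{eq:EAexplo}: by H\"older the exponent is $\lesssim T^{1/2}M_0$, which is $\le\log2$ once $T\le T_1$. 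The Strichartz bound is closed by the inhomogeneous Strichartz estimate: the source being quadratic, its dual-Strichartz norm carries a factor $M_0$ together with a positive power of $T$, so that the nonlinear contribution to $\|D(b,z)\|_{L^2_tL^\infty}$ is a higher-order perturbation of $M_0$ on $[0,T_1]$. Both bounds therefore close for $T\le T_1=c\,M_0^{-2}$ with $c$ small, which gives the lower bound on the lifespan. The discrepancy estimate follows from the same Duhamel formula: $(b-\dot b,v-\dot v)$ solves \eqref{eq:qhdlinear} with the quadratic source, so applying the homogeneous and inhomogeneous Strichartz (and energy) estimates bounds it by the dual-Strichartz norm of that source, which is quadratic in $M_0$ and carries an explicit power of $t$, yielding the announced control up to time $T_1$.

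The main obstacle is the first step: proving the Strichartz inequalities for \eqref{eq:qhdlinear} with constants that are explicit in $\eps$. The delicate points are the matching of the wave and Schr\"odinger regimes across the frequency threshold $|\xi|\sim1/\eps$, where $\omega$ interpolates between the two behaviours and the Hessian of the symbol degenerates, and the correct bookkeeping of the $\eps$-powers coming from the parabolic scaling of the high-frequency block, so that the final estimate is expressed precisely through the norm $M_0$ entering $T_1$. Everything else---the commutator and product estimates needed to close the energy and Strichartz bootstraps, already visible in the derivation of \eqref{eq:EAexplo}---is routine once these dispersive inequalities are in hand.
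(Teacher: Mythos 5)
Your proposal follows essentially the same route as the paper: the paper's own treatment (which defers the details to \cite{BDS08}) rests exactly on the wave/Schr\"odinger frequency splitting of \eqref{eq:qhdlinear} at $|\xi|\sim 1/\eps$, Strichartz estimates for that linearized flow, and their insertion into the energy inequality \eqref{eq:EAexplo} so as to upgrade the Gronwall exponent from $T\,M_0$ to $T^{1/2}M_0$, which is precisely your mechanism for reaching $T_1\sim M_0^{-2}$. The points you single out as delicate---the $\eps$-uniform treatment of the dispersion relation across the threshold $|\xi|\sim 1/\eps$ and the restriction $d\ge 4$ needed for $L^2$-in-time admissibility on the wave-like block---are indeed exactly the technical content carried out in \cite{BDS08}.
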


%%%%%%%%%%%%%%%%%%%%%%%%%%%%%%%%%%%%%

\subsection{Extended formulation for Korteweg  fluids}

Compared to the ``improved'' WKB method, the main drawback of the direct 
approach based on an extended formulation for solving \eqref{eq:qhd} is
that vanishing solutions cannot be handled. 

On the other hand, the direct method is robust enough so as 
to be used to solve locally more complicated models such 
as the following system governing the evolution of  inviscid capillary fluids:
    \begin{equation}\label{eq:capillary}
\left\{\begin{array}{l}
 \partial_tv+v\cdot\nabla v+\nabla f(\rho)=
 \nabla\Bigl(\kappa(\rho)\Delta\rho+\displaystyle\frac12\kappa'(\rho)|\nabla\rho|^2\Bigr),\\[1ex]
  \partial_t\rho+\DIV(\rho v)=0.
  \end{array}\right.
 \end{equation}
Physically, the function $\kappa$ correspond to the capillary coefficient. 
Obviously,  System~\eqref{eq:qhd} is included in
\eqref{eq:capillary}  (take
$\kappa(\rho)=\eps^2/(4\rho)$). 
In the general case, introducing $$a(\rho)=\sqrt{\rho \kappa(\rho)},\quad
w=-\sqrt{\frac{\kappa(\rho)}\rho}\,\nabla\rho\ \hbox{ and }\ z=v+iw,$$
we get the following extended formulation for \eqref{eq:capillary}:
 \begin{equation}\label{eq:capillaryext}
\left\{\begin{array}{l}
 \partial_tz+v\cdot\nabla z+i\nabla z\cdot w+\nabla f(\rho)=i\nabla(a(\rho)\DIV z),\\[1.5ex]
  \partial_t\rho+\DIV(\rho v)=0.
  \end{array}\right.
 \end{equation}
Note that in the potential case (namely ${\rm curl}\, z=0$) then 
$v\cdot\nabla z+i\nabla z\cdot w=\frac12\nabla(z\cdot z).$
Note also that in the general case, the second order term 
$\nabla(a(\rho)\DIV z)$ is degenerate.

The case of System \eqref{eq:qhd}  is particularly simple inasmuch as
 $a$ is the constant function $\eps/2$ and $\nabla\DIV z=\Delta z.$
 
 For general capillarity coefficients, 
 one may  prove a local well-posedness result, 
similar to that  of Theorem \ref{th:qhd}. This has been done in \cite{BDD07}. 
The proof relies on the use of weighted Sobolev estimates, 
with a weight depending both on $\rho$ and on  the order of differentiation.

 As for the QHD system, we expect the potential part of the solution
 to System~\eqref{eq:capillary} 
 to have dispersive properties in dimension $d\ge2.$ The general situation is much more
 complicated however, because those properties are related to those of 
 the  quasilinear Schr\"odinger equation. 
 To  our knowledge,
this aspect has been investigated only very recently in a work by C. Audiard \cite{Audiard2} that concerns 
potential flows.  

%%%%%%%%%%%%%%%%%%%%%%%%%%%%%%%%%%%%%%%%%%%%%%%%%%%%%
%%%%%%%%%%%%%%%%%%%%%%%%%%%%%%%%%%%%%%%%%%%%%%%%%%%%%

\section{Asymptotics for the Gross--Pitaevskii equation}\label{s:JC}

    This section is concerned with the existence and asymptotics of traveling wave solutions for 
    the Gross-Pitaevskii equation
\begin{equation}\label{eq:GP}
i\psi_t+\Delta \psi+(1-|\psi|^2)\psi=0,
\end{equation}
which may be obtained from \eqref{eq:nlssemi} (with
 $f(r)=r-1$), up to the factor $\frac{1}{2}$, after performing the
 change of unknown: 
$$
\psi^\eps(t,x)=\psi(\eps^{-1}t,\eps^{-1}x).
$$
Equation \eqref{eq:GP} is 
associated to the \emph{Ginzburg-Landau energy} (or Hamiltonian): 
\begin{equation}\label{GL}
\cH(\psi) = \frac{1}{2} \int_{\R^d} |\nabla \psi|^2 + \frac{1}{4}
\int_{\R^d} (1 - |\psi|^2)^2.% \equiv \int_{\R^d} e(\psi). 
\end{equation}
As a consequence, 
 in contrast with the cubic NLS equation, the natural energy space 
for \eqref{eq:GP} is not $H^1(\R^d)$ but rather 
$$E(\R^d) = \bigl\{ \psi \in H^1_{\rm loc}(\R^d), \ {\rm s.t.} \ \cH(\psi) < +
\infty \bigr\}\cdotp$$  
As pointed out before, for $\cH(\psi)$ to be finite, $|\psi|$ must, in some sense, tend to $1$ at infinity. 
 This ``nontrivial" boundary condition
provides \eqref{eq:GP} with a richer dynamics  than in the case of null
condition at infinity which, for a defocusing NLS type equation, is
essentially governed by dispersion and scattering. 
For instance, in nonlinear optics, the ``dark solitons" are localized
nonlinear waves (or ``holes")  which exist on a stable continuous wave
background. The boundary condition  $|\psi(t, x)| \rightarrow 1$ at infinity is due to
this nonzero background. In the context of superfluids, $1$ is the 
density of the fluid at infinity. 
\medbreak

Similarly to the energy, the \emph{momentum}
$$\cP(\psi) = \frac{1}{2} \int_{\R^d} \langle i \nabla \psi \ ,
\psi  \rangle,$$
is formally conserved. This quantity is well defined for $\psi \in
H^1(\R^d)$ but not for solutions with a finite Ginzburg-Landau
energy. A major difficulty in the theory of the Gross-Pitaevskii
equation is to find an appropriate definition of the momentum which
leads to a conserved quantity. A natural definition would be  
$$\cP(\psi) = \frac{1}{2} \int_{\R^d} \langle i \nabla \psi \ ,
\psi -1 \rangle,$$
but this would require for instance that $\psi -1 \in L^2(\R^d).$ 

 In any case we will denote by $\p$ the (scalar) first component of
$\cP$ which will play an important role in this section.
\smallbreak
Recall that the use of the Madelung transform\footnote{In this section, we use the normalization 
of \cite{BGS2,BGS1}, instead of $\psi=\sqrt\rho e^{i\phi}.$}
$$\psi = \varrho e^{i\phi},$$
leads to the following hydrodynamic form of the equation for $\varrho$ and $v=2\nabla\phi:$
\begin{equation}
\label{lamadelon}
\left\{ 
\begin{aligned} 
&  \partial_t v + v\cdot\nabla v + 2\nabla \varrho^2 = 2\nabla\biggl(\frac{\Delta\varrho}\varrho\biggr),\\
  &\partial_t \varrho^2 + \div(\varrho^2 v) = 0.
\end{aligned} 
\right.
\end{equation}

As pointed out before, if we discard the right-hand side of the first equation and look at $\varrho^2$ as the density
of a fluid with velocity $v,$ 
then the above system coincides with the Euler equations for a compressible fluid with pressure
law $P(\rho)=\rho^2.$ 
 In particular, the speed of
sound waves near the constant solution $v = 1$ is given by 
$$c_s = \sqrt2.$$
As we will see below, this sound speed (the  value and relevance of which is not so obvious 
if looking at the original Gross-Pitaevskii equation \eqref{eq:GP}),  plays an important role in various aspects of the dynamics of \eqref{eq:GP}.
The value of $c_s$ may be also found by  neglecting the quantum pressure term and linearizing for a
perturbation $\psi = (1 + \tilde{\varrho}) \exp (i \tilde{\phi}).$
This leads to the wave equation: 
$$\partial^2_t \tilde{\varrho} - 2 \Delta \tilde{\varrho} = 0.$$
Note that if  the quantum pressure is included (as in
\eqref{eq:qhdlinear})  then the linearization reads:
$$\partial^2_t \tilde{\varrho} - 2 \Delta \tilde{\varrho} - \Delta^2
\tilde{\varrho} = 0,$$ 
which, roughly,  is the factorization of two linear Schr\"{o}dinger operators.
\medbreak
 The Madelung transform and the hydrodynamic form of \eqref{eq:GP} turn out 
to be of  great interest to study  the Gross-Pitaevskii equation
 with finite Ginzburg--Landau energy  since the solution is expected 
 to have very few ``vortices'' (or cancellations). 
  Even in the ``Euler limit'' that has been presented in Section \ref{s:remi}, 
   one can use it outside the vortices
 (\cite{BOS,BS}) to study the traveling waves of sufficiently
 small velocities. Let us also stress that the hydrodynamic form of the (one
 dimensional) Gross--Pitaevskii equation is needed
 in order to define a generalized momentum in the context of the
 orbital stability of the black solitons (such solitary waves have
 zeroes\dots); see \cite{BGSS1}.  
 
 In the present section, we shall concentrate on the \emph{transonic
   limit} of solutions to the 
 Gross-Pitaevskii equation. 
 We shall first present a result
 pertaining to the asymptotics of traveling waves
 with speed $c$ tending to the sound speed $c_s,$ in the case $d=2,$
 in connexion with the (KP I) equation (see below). 
 Next, for the one-dimensional case, we give an accurate description of  the
 transonic long wave limit of \eqref{eq:GP} in terms of solutions to
 the KdV equation.

 %%%%%%%%%%%%%%%%%%%%%%%%%%%%%%%%%%

 \subsection{The transonic limit of finite energy traveling waves}

   Finite energy traveling wave solutions of \eqref{eq:GP} are solutions
 of the form $\psi(x,t)=\theta(x_1-ct, x^{\perp})$ where
 $\cH(\theta)<+\infty$ and $x^{\perp}$ denotes the transverse
 variables $x_2,\cdots,x_d.$  
 The profile $\theta$ satisfies the following equation: 
\begin{equation}
\label{TWc}
i c \partial_1 \theta + \Delta \theta + \theta (1 - |\theta|^2) = 0.
\end{equation}
A suitable functional setting for the study of such traveling waves
is the space: 
\begin{equation*}
W(\R^d) = \{ 1 \} + V(\R^d),
\end{equation*}
with
 \begin{equation*}
V(\R^d) = \{ \psi: \R^d \mapsto \C,  \ (\nabla \psi, \RE\psi)\! \in\!
L^2(\R^d)^2, \IM\psi\! \in\! L^4(\R^d),  \; \nabla \RE\psi \in
L^{4/3}(\R^d) \}\cdotp
\end{equation*}
Indeed, given that $W(\R^d)$ is a subset of the energy space  $E(\R^d),$
for any data in $W(\R^d),$ Equation 
 \eqref{eq:GP} admits a unique solution. In addition, one may show
 that this solution stays 
 in $W(\R^d)$ (see \cite{PG05,Ger08}). 
 Furthermore,  the quantity $\langle i \partial_1 \psi, \psi-1
\rangle$ is integrable whenever $\psi \in W(\R^d)$, so that the
scalar momentum $\p(\psi)$ is well-defined. This is a consequence of the
identity 
\begin{equation}
\label{identityp}
\langle i \partial_1\psi, \psi-1 \rangle = \partial_1(\RE\psi) \IM\psi - \partial_1(\IM\psi) (\RE\psi - 1),
\end{equation}
and various H\"older's inequalities.

So finally, $\cH$ and $\p$ are continuous on $ W(\R^d)$ and  all finite energy
subsonic solutions to \eqref{TWc}  have to belong to $W(\R^d)$.   
Moreover, if  $\psi\in W(\R^d)$ may be lifted as  $\psi=\varrho \exp (i\phi)$
then   
\begin{equation}
\label{celemoment}
\p(\psi) = \frac{1}{2} \int_{\R^d} \langle i \partial_1 \psi \ , \psi - 1
\rangle = \frac{1}{2} \int_{\R^d} (1 - \varrho^2) \partial_1 \phi. 
\end{equation}
Notice that for maps which may be lifted, with $\varrho\ge
\frac{1}{2}$, the last integral makes sense, even if we assume that
$\psi$ only belongs to the energy space $E(\R^d).$ 
\smallbreak
To simplify the presentation, we shall focus on the simpler case $d=2.$  
We shall denote $x=x_1$ and $y=x^{\perp}=x_2.$

 It is proven in \cite{BGS2} that, for any  $\p>0$,  the following
minimization problem 
\begin{equation}
\label{eminent}
\cH_{\min}(\p) = \inf \{ \cH(\psi), \psi \in W(\R^2), \p(\psi) = \p \},
\end{equation}
has a solution $u_{\p}$ which is a nontrivial traveling wave. We call
it  a \emph{ground state}.

In the rest of this subsection, we focus on the asymptotics $\p$ going
to $0$ for $u_\p,$ in connexion with the 
 Kadomtsev--Petviashvili I (KP I) equation 
 \begin{equation*}
 u_t+uu_x+u_{xxx}-\partial_x^{-1}u_{yy}=0,
 \end{equation*}
where the antiderivative is defined in Fourier variables by
$\widehat{\partial_x^{-1}f}(\xi)=\frac{1}{i\xi}\widehat{f}(\xi).$ 
\medbreak

 The following proposition states that the corresponding 
speed $c(u_\p)$  tends to $c_s$ and gives the first term in the asymptotic
expansion for both $\cH_{\min}(\p)$ and $c(u_\p).$

\begin{proposition}
\label{T2bis}
There exist positive constants $\p_1$, $K_0$ and $K_1$ such that we
have the asymptotic behaviors 
\begin{equation}
\label{estimE2}
\frac{48 \sqrt{2}}{\cS_{KP}^2} \p^3 - K_0 \p^4 \le \sqrt{2} \p -
\cH_{\min}(\p) \le K_1 \p^3,\quad  \forall 0\le \p \le \p_1, 
\end{equation}
where  $\cS_{KP}$ stands for  the  action of the KP I ground state $N$  of
velocity $1,$ that is 
\begin{equation}
\label{action}
\cS_{KP} = \frac{1}{2} \int_{\R^2} (\partial_x N)^2 + \frac{1}{2} \int_{\R^2} (\partial_x^{-1}(\partial_y N))^2 - \frac{1}{6} \int_{\R^2} N^3 + \frac{1}{2} \int_{\R^2} N^2.
\end{equation}

Moreover, the map $u_\p$  has no zeroes and 
there exist some positive constants $\p_2$, $K_2 $ and $K_3$ such that
\begin{equation}
\label{c-estim}
K_2 \p^2 \le \sqrt{2} - c(u_\p) \le K_3 \p^2, \quad\forall 0\le \p < \p_2.
\end{equation}
\end{proposition}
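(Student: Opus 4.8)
The plan is to obtain the energy estimate \eqref{estimE2} by sandwiching $\cH_{\min}(\p)$ between an upper bound, coming from an explicit test map built on the KP~I ground state $N$, and a lower bound, extracted from the minimizer $u_\p$ itself; the speed estimate \eqref{c-estim} and the absence of zeroes will then follow from this analysis. The organizing principle is the transonic rescaling that links \eqref{eq:GP} to the KP~I equation: writing $c^2=2-\eps^2$, so that $\sqrt2-c\sim\eps^2/(2\sqrt2)$ as $\eps\to0$, one expects a traveling wave of speed $c$ to have a density defect $1-|\psi|^2$ of size $\eps^2$ and a phase gradient $\partial_1\phi$ of size $\eps^2$, both depending on the slow variables $(\eps x,\eps^2 y)$, and the rescaled density profile to solve, at leading order, the KP~I solitary-wave equation satisfied by $N$. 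A count of the scaling weights (Jacobian $\eps^{-3}$ against an integrand of size $\eps^4$ in \eqref{celemoment}) already shows $\p\sim\eps$, hence $\sqrt2-c\sim\p^2$ and an $O(\p^3)$ correction to the energy, which fixes the shape of \eqref{estimE2}--\eqref{c-estim}.

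For the lower bound on $\sqrt2\,\p-\cH_{\min}(\p)$ in \eqref{estimE2}, equivalently $\cH_{\min}(\p)\le\sqrt2\,\p-\tfrac{48\sqrt2}{\cS_{KP}^2}\p^3+K_0\p^4$, I would construct a one-parameter family $\psi_\eps\in W(\R^2)$ by prescribing the density defect $1-|\psi_\eps|^2$ and the phase $\phi_\eps$ as suitable $\eps$-rescalings, in the variables $(\eps x,\eps^2 y)$, of $N$ and of the corresponding KP~I velocity potential, arranged so that $\psi_\eps=\varrho_\eps e^{i\phi_\eps}$ with $\varrho_\eps$ bounded away from $0$. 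Inserting this ansatz into the Ginzburg--Landau energy \eqref{GL} and into the momentum \eqref{celemoment} and expanding in $\eps$, the leading term reproduces $c_s\,\p=\sqrt2\,\p$ while the next, $O(\eps^3)$, term is controlled by the KP~I action \eqref{action}; this is precisely where the constant $48\sqrt2/\cS_{KP}^2$ originates. Choosing $\eps=\eps(\p)$ so that $\p(\psi_\eps)=\p$ and estimating the remainders yields the upper bound for $\cH_{\min}$, the $K_0\p^4$ term absorbing the higher-order corrections.

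The reverse inequality $\cH_{\min}(\p)\ge\sqrt2\,\p-K_1\p^3$ is the heart of the matter and the step I expect to be the main obstacle, since here the map cannot be chosen and one must argue on an arbitrary minimizer $u_\p$. The route I would follow is: (i) a priori bounds giving $\cH_{\min}(\p)/\p\to\sqrt2$ and confining the Lagrange multiplier to a subsonic window $c(u_\p)<\sqrt2$; (ii) the proof that for $\p$ small $u_\p$ has no zeroes, so that it lifts as $u_\p=\varrho\,e^{i\phi}$ with $\varrho$ close to $1$---this is the non-vanishing statement of the proposition, and it is what legitimizes the hydrodynamic identities \eqref{celemoment}, \eqref{lamadelon}, the point being that $1-|u_\p|^2$ is of size $\eps^2N$ with $N$ bounded, so $|u_\p|$ stays near $1$; (iii) a concentration-compactness argument on the transonically rescaled profiles showing that, up to translations, they converge to a minimizer of the KP~I action---full uniqueness of KP~I ground states being delicate, I would only use that every such minimizer carries the same action $\cS_{KP}$, which is all that enters \eqref{estimE2}. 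Made quantitative, this convergence upgrades the expansion of the previous paragraph, applied now to $u_\p$, into a lower bound matching the test-map upper bound at order $\p^3$. The delicate points are excluding vanishing and dichotomy in the compactness step (a residual vortex would cost energy without supplying the matching momentum) and keeping the error terms uniform in $\p$.

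Finally, for the speed \eqref{c-estim} I would exploit the variational relation $c(u_\p)=\cH_{\min}'(\p)$, valid because $c$ is the Lagrange multiplier of \eqref{eminent}. Combined with the concavity of $\p\mapsto\cH_{\min}(\p)$ and the sharp expansion furnished by \eqref{estimE2}, this formally gives $\sqrt2-c(u_\p)\sim\tfrac{144\sqrt2}{\cS_{KP}^2}\p^2$. To avoid differentiating inequalities, I would make this rigorous through the two Pohozaev identities associated with the $x$- and $y$-dilations of \eqref{TWc}, which express $c(u_\p)\,\p$ as an explicit combination of the energy integrals of $u_\p$; feeding in the KP~I asymptotics of the rescaled profile from step (iii) then bounds $\sqrt2-c(u_\p)$ from both sides at order $\p^2$ and produces the constants $K_2,K_3$ in \eqref{c-estim}.
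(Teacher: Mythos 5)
Two preliminary remarks. First, this survey does not actually prove Proposition~\ref{T2bis}: it quotes it from \cite{BGS2,BGS1}, so your attempt has to be measured against the arguments of those papers. Second, your treatment of the inequality $\cH_{\min}(\p)\le \sqrt2\,\p-\frac{48\sqrt2}{\cS_{KP}^2}\,\p^3+K_0\p^4$ is the correct one and coincides with theirs: one evaluates $\cH$ and $\p$ on an explicit two-scale ansatz modelled on the KP~I ground state, and the constant $48\sqrt2/\cS_{KP}^2$ falls out of that computation.

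The genuine gap lies in your ``heart of the matter'' and in your second route to \eqref{c-estim}: both take as an ingredient the convergence, after transonic rescaling, of the minimizers $u_\p$ to KP~I ground states. That convergence is precisely Theorem~\ref{convGPKP} of this paper, and it is proved in \cite{BGS1} \emph{from} Proposition~\ref{T2bis}: the two-sided expansion \eqref{estimE2} is what shows that the rescaled profiles form a near-minimizing family for the KP~I action, and \eqref{c-estim} is what guarantees that the rescaling parameter $\eps$, defined through $c(u_\p)^2=2-\eps^2$, is comparable to $\p$, hence that the rescaled profiles are uniformly bounded. Without these inputs your concentration-compactness step cannot even be set up ($\eps$ could a priori degenerate relative to $\p$, and no uniform bound on the rescaled densities is available), so the scheme is circular. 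The same objection applies to deducing non-vanishing from ``$1-|u_\p|^2$ is of size $\eps^2 N$'': in \cite{BGS2} the non-vanishing comes \emph{first}, from the fact (recalled in the paper immediately after the proposition) that every finite-energy traveling wave of small enough energy satisfies $\frac12<|\psi|<1$, obtained by elliptic estimates on \eqref{TWc}; it is a prerequisite for the hydrodynamic framework, not a consequence of the KP~I description.

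What replaces your compactness step is elementary. Once $u_\p=\varrho\,e^{i\phi}$ is lifted, write $\eta=1-\varrho^2$ and use the pointwise inequality $\frac{\sqrt2}{2}\,\eta\,\partial_1\phi\le\frac14\eta^2+\frac12(\partial_1\phi)^2$ in \eqref{celemoment} together with the hydrodynamic form of \eqref{GL}; this yields
\begin{equation*}
\sqrt2\,\p(u_\p)-\cH(u_\p)\;\le\;\frac12\int_{\R^2}\eta\,|\nabla\phi|^2\;\le\;C\,\|\eta\|_{L^\infty}\,\cH(u_\p),
\end{equation*}
so the missing bound $\sqrt2\,\p-\cH_{\min}(\p)\le K_1\p^3$ reduces to the sup-norm estimate $\|1-|u_\p|^2\|_{L^\infty}\lesssim\p^2$, which is where the quantitative work of \cite{BGS2} lies (elliptic estimates on the traveling-wave equation, bootstrapped with the speed bounds), with no KP~I compactness involved. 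Finally, for \eqref{c-estim} your first instinct was already the complete argument, and there is no need for Pohozaev identities: since $\cH_{\min}$ is concave and $c(u_\p)$ lies between its one-sided derivatives at $\p$, the map $\p\mapsto\sqrt2\,\p-\cH_{\min}(\p)$ is convex, nonnegative, and pinched by \eqref{estimE2}; the difference quotients $g(\p)/\p\le g'(\p^-)\le\sqrt2-c(u_\p)\le g'(\p^+)\le g(2\p)/\p$ then give both constants $K_2$ and $K_3$. Concavity is exactly what makes ``differentiating the inequalities'' rigorous, whereas your Pohozaev alternative again feeds in the KP~I asymptotics of the rescaled profile, i.e.\ the circular step.
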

Actually, it was established in \cite{BGS2} that if $\psi$  is a finite energy
traveling wave of sufficiently small energy, then 
$$\frac{1}{2}<|\psi|<1.$$
%$$\lVert 1-\lvert \psi\rvert \rVert_{\infty}\ge \frac{\epsilon(\psi)^2}{10}, \text{ and
%}\epsilon(\psi)\le K_2 \cH(\psi),$$ 
%where  $\epsilon(\psi)^2=2-c(\psi)^2$ and $K_2$ is an absolute constant.
This implies that small energy traveling waves have no zeroes and thus
can be lifted according to the Madelung transformation. This is in
particular the case of minimizers  $u_\p$ 
corresponding to small enough values of $\p.$

The {\it transonic limit} to KP I for traveling waves is obtained through  the following change of scales:
$$\tilde{x}=\epsilon(\psi)x,\quad
\tilde y=\frac{\epsilon(\psi)^2}{\sqrt{2}}y.$$ 
We then set 
$$\eta=1-|\psi|^2,\quad
N_\psi(x,y)=\frac{6}{\epsilon(\psi)^2}\eta\(\frac{x}{\epsilon(\psi)},
\frac{\sqrt{2}y}{\epsilon(\psi)^2}\).$$ 
It turns out that $N_\psi$ converges to  a traveling wave solution of the
(differentiated) KP I equation \footnote{Actually, to a ground state solution of
  the KP I equation, that is a minimizer of the Hamiltonian with fixed
  $L^2$ norm.} as $\epsilon(\psi)\to 0$, that is it approximately solves
the equation 
\begin{equation}\label{TWKP}
-w_{xx}-w_{yy}+w_{xxxx}+(ww_x)_x=0.
\end{equation}
More precisely, denoting $N_{\p}=1-|u_{\p}|^2$ where
$u_{\p}=|u_{\p}|e^{i\phi_{{\p}}}$ is a minimizer of the energy with
fixed momentum ${\p},$ and
$$\Theta_{{\p}}(x,y)=\frac{6\sqrt{2}}{\epsilon_{{\p}}}\phi_{{\p}}
\(\frac{x}{\epsilon_{{\p}}},\frac{\sqrt{2}y}{\epsilon^2_{{\p}}}\),$$ 
we have (see \cite{BGS1}):
\begin{theorem}
\label{convGPKP}
There exist  a subsequence $(\p_n)_{n \in \N}$ tending to $0$, as $n
\to + \infty$, and  a ground state $w$ of the KP I equation such that 
both $N_{\p_n}$ and $\Theta_{{\p}_n}$ tend to $w$ in $W^{k,q}(\R^2)$
(for any $k\in\N$ and $q\in (1,+\infty]$) as
 $n$ goes to $+\infty.$
 \end{theorem}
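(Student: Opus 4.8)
The plan is to run the standard variational/concentration-compactness strategy for transonic limits, exploiting crucially that for small $\p$ the minimizer $u_\p$ has no zeroes (Proposition~\ref{T2bis}), so that the Madelung lift $u_\p=\varrho_\p e^{i\phi_\p}$ with $\varrho_\p=|u_\p|$ and the hydrodynamic form \eqref{lamadelon} are available throughout. First I would introduce the transonic scaling parameter $\epsilon_\p:=\epsilon(u_\p)$, which by \eqref{c-estim} is comparable to $\p$, and rewrite the Ginzburg--Landau energy $\cH(u_\p)$ and the scalar momentum $\p(u_\p)$ in terms of the rescaled profiles $N_\p$ and $\Theta_\p$. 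The key algebraic point is that, expanded to leading order in $\epsilon_\p$ about the constant state $1$ (the Jacobian of the anisotropic rescaling producing the correct power of $\epsilon_\p$), the rescaled energy converges to the KP~I action $\cS_{KP}$ of \eqref{action}, while the momentum normalization $\p(u_\p)=\p$ becomes, at leading order via \eqref{celemoment}, a fixed-$L^2$-mass constraint on $N_\p$. The sharp two-sided asymptotics \eqref{estimE2} for $\cH_{\min}(\p)$ are precisely what guarantee that no energy is lost in this expansion.

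Next I would extract uniform bounds. The energy control $\cH(u_\p)=\cH_{\min}(\p)\le\sqrt2\,\p$ together with the rescaling yields uniform bounds on $\partial_x N_\p$, on $\partial_x^{-1}\partial_y N_\p$ and on $N_\p$ in $L^2(\R^2)$ --- exactly the quantities controlled by $\cS_{KP}$. These give weak compactness: along a subsequence $\p_n\to0$, one has $N_{\p_n}\rightharpoonup w$ in the energy space attached to the KP~I action. The danger in this translation-invariant problem on $\R^2$ is vanishing or dichotomy, so I would invoke Lions' concentration-compactness principle, using the lower bound $K_2\p^2\le\sqrt2-c(u_\p)$ in \eqref{c-estim} (which forbids the rescaled energy from escaping to zero) to exclude vanishing, and the strict subadditivity of $\p\mapsto\cH_{\min}(\p)$ read off from the cubic term in \eqref{estimE2} to exclude dichotomy. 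After a suitable translation this produces full compactness and a nontrivial limit $w$.

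I would then pass to the limit in the Euler--Lagrange equations. Writing the traveling-wave equation \eqref{TWc} in the Madelung variables and rescaling, the leading-order balance is exactly the differentiated KP~I traveling-wave equation \eqref{TWKP}; since the quantum-pressure term and the higher-order nonlinearities are of subleading order in $\epsilon_\p$, passing to the limit shows that $w$ solves \eqref{TWKP}. Here the Madelung formulation pays off: the continuity equation in \eqref{lamadelon}, for a traveling wave of speed $c$, slaves $\partial_1\phi_\p$ to $1-\varrho_\p^2$ at leading order (the Riemann invariants degenerating as $c\to c_s=\sqrt2$), and the normalizations defining $N_\p$ and $\Theta_\p$ are chosen so that this slaving forces $\Theta_{\p_n}$ to share the \emph{same} weak limit $w$ as $N_{\p_n}$. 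Matching the limiting value of the rescaled energy against $\cS_{KP}$ through the pinching in \eqref{estimE2} then identifies $w$ as a ground state of KP~I, i.e. a minimizer of its Hamiltonian at the prescribed $L^2$ mass.

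Finally, strong convergence and the gain of regularity. Because \eqref{estimE2} pins the limiting energy to the exact ground-state value, the weak convergence of $N_{\p_n}$ upgrades to strong convergence in the energy norm (no mass and no energy being lost), and the same holds for $\Theta_{\p_n}$ via the slaving relation. From there I would bootstrap: $w$ solves the elliptic-type equation \eqref{TWKP}, while $N_{\p_n},\Theta_{\p_n}$ solve perturbed versions of it with $O(\epsilon_{\p_n})$ remainders, so elliptic regularity combined with Sobolev embedding propagates the strong convergence to $W^{k,q}(\R^2)$ for every $k\in\N$ and $q\in(1,+\infty]$. The main obstacle, to my mind, is the concentration-compactness step coupled with the no-loss-of-energy argument: one must show that the sharp constants of \eqref{estimE2} genuinely enforce strict subadditivity and hence full compactness, and at the same time control the quantum-pressure correction uniformly so that the subleading terms really drop out in the limit.
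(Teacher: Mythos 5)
The first thing to say is that the paper contains no proof of Theorem~\ref{convGPKP}: it is a survey statement, quoted with the pointer ``(see \cite{BGS1})'', and the variational input it rests on (Proposition~\ref{T2bis}) is likewise imported from \cite{BGS2}. So your proposal can only be measured against the strategy of those cited works, and in outline it matches it: the Madelung lift is legitimate because small-energy travelling waves satisfy $\frac12<|u_\p|<1$; the anisotropic rescaling with $\epsilon_\p$ comparable to $\p$ (via \eqref{c-estim}) turns the energy--momentum asymptotics \eqref{estimE2} into a pinching of the rescaled action at $\cS_{KP}$; passing to the limit in the hydrodynamic form of \eqref{TWc} produces \eqref{TWKP}; the slaving of $\partial_x\Theta_\p$ to $N_\p$ through the continuity equation explains why both rescaled quantities share the limit $w$; and the identification of $w$ as a ground state comes from matching the limiting action against the sharp constants in \eqref{estimE2}. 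These are indeed the ingredients of the actual argument.

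The genuine weak point is the final regularity step, which is where the bulk of the technical work lies and which your sketch compresses into ``elliptic regularity combined with Sobolev embedding''. Convergence in $W^{k,q}(\R^2)$ for \emph{every} $k\in\N$ and every $q\in(1,+\infty]$ requires uniform-in-$\p$ bounds of all orders on $N_\p$ and $\Theta_\p$ \emph{before} any limit is taken, and these do not follow from soft elliptic theory applied to the perturbed equations: the linearized travelling-wave operator degenerates exactly in the transonic limit $c\to\sqrt2$ (its symbol becomes the anisotropic KP symbol under the scaling), so the constants in naive elliptic estimates blow up as $\p\to0$. What is needed instead are quantitative Fourier-multiplier/kernel estimates adapted to the KP scaling, yielding uniform $W^{k,q}$ bounds and decay for the rescaled profiles; this is the heart of \cite{BGS1}, and without it the weak limit cannot be upgraded to the stated topologies, in particular not to $q=\infty$. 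A secondary misplacement: Lions' concentration-compactness with strict subadditivity is the mechanism by which the minimizers $u_\p$ are \emph{constructed} at fixed $\p$ in \cite{BGS2}; for the limit $\p\to0$, nonvanishing of the limit profile is instead extracted from the lower bound in \eqref{estimE2}, which forces $w$ to carry a definite amount of KP action, in combination with the uniform estimates just described.
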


\begin{remark} 
Those results on  the transonic limit of solitary waves have been recently extended in \cite{ChMa-p} to the
three-dimensional case  with also  general
nonlinearities   (see also  \cite{Ma-p} for other existence
results). The additional serious difficulty is that  the 
Gross--Pitaevskii ground states solutions are {\it no longer} global
minimizers of 
the energy with fixed momentum and thus not expected to be stable. 
 \end{remark}

%%%%%%%%%%%%%%%%%%%%%%%%%%%%%%%%%%%%%%%%%%%%%%%%

\subsection{The unsteady transonic long wave limit of the
   Gross--Pitaevskii equation} 
   
 The Madelung transform is also crucial to derive and justify the
 transonic (weak amplitude, long wave) limit of the
 Gross--Pitaevskii equation. 
 
 With the scaling which is used in this section and for data which are 
perturbations of order $\eps^2$ of a constant state with modulus $1,$ 
Theorem~\ref{th:qhd} and the remark that follows ensure that 
the linear wave equation gives  a good approximation of the solution
for $t=o(\eps^{-3})$.  
In the present paragraph, we describe what happens at next order. 
We shall see in particular that, up to times of order $\eps^{-3},$ the
Korteweg--de Vries (KdV) equation  
 \begin{equation*}
 u_t+uu_x+u_{xxx}=0,
 \end{equation*}
gives an accurate approximation of the one-dimensional Gross-Pitaevskii equation 
 \begin{equation}\label{1DGP}
i\d_t\psi+\partial_{xx} \psi+(1-|\psi|^2)\psi=0
\end{equation}
with data which are small
long-wave perturbations of the constant one, namely $\psi=\varrho
e^{i\phi}$ with  
\begin{equation}\label{eq:data}
 \varrho_0 = \( 1 - \frac{\eps^2}{6} N_\eps^0(\eps x)
 \)^{1/2},\quad 
\phi_0 = \frac{\eps}{6 \sqrt{2}}
\Theta_\eps^0(\eps x), 
\end{equation}
for $0<\eps\ll 1,$  and $N_\eps^0$ and
$W_\eps^0 = \partial_x \Theta_\eps^0$ are uniformly bounded in
some Sobolev space $H^k(\R)$ for sufficiently large $k$.

We refer to \cite{BGSS2,BGSS3} for a detailed analysis and
will only summarize the limit to (long) waves propagating in two
directions and following a coupled system of KdV equations. 

Recall that the one-dimensional Gross--Pitaevskii equation \eqref{1DGP} is
globally well posed in  the Zhidkov type spaces $Y^k,$  
$$Y^k(\R) = \left\lbrace \psi \in L^1_{\text {\rm loc}}(\R; \C), \; \ 1 -
  |\psi| ^2 \in L^2(\R),\quad\partial_x \psi \in H^{k - 1}(\R)
\right\rbrace,$$ 
for any integer  $k \ge 1$. 

Moreover the Ginzburg-Landau energy $\cH(\psi(t))$ is conserved by the flow
and, provided $\cH(\psi_0)<\frac{2\sqrt{2}}{3},$ the corresponding solution
$\psi(t)$ does not vanish so that one may write $\psi= \varrho \exp (i \phi)$, for some continuous function $\phi.$ 
\medbreak
We consider data as in \eqref{eq:data} with small enough $\eps$ and assume in addition that
\begin{equation*}
%\label{H1}
\| N_\eps^0 \|_{\mathcal M(\R)}+ \| \partial_x \Theta_\eps^0
\|_{\mathcal M(\R)} < + \infty. 
\end{equation*}
Here, $\|\cdot \|_{\mathcal M(\R)}$ denotes the norm defined on
$L_{\rm loc}^1(\R)$ by 
\begin{equation*}
%\label{massnorm}
\| f \|_{\mathcal M(\R)} = \underset{(a, b) \in \R^2}{\sup} \bigg|
\int_a^b f(x) dx \bigg|. 
\end{equation*}
We next introduce the \emph{slow coordinates}
$$x^- = \varepsilon (x + \sqrt{2} t), \ x^+ = \varepsilon (x -
\sqrt{2} t), \ {\text{ and}} \ \tau = \frac{\varepsilon^3}{2 \sqrt{2}}
t.$$ 
The definition of the coordinates $x^-$ and $x^+$ corresponds to
reference frames traveling to the left and to the right, respectively,
with speed $\sqrt{2}$ in the original coordinates $(t,x)$. We define
accordingly the rescaled functions $N_\eps^\pm$ and $\Theta_\eps^\pm$
as follows: 
\begin{equation}
\label{slow-var}
\left\{
\begin{aligned}
N_\eps^\pm(\tau,x^\pm) & = \frac{6}{\eps^2} \eta(t,x) =
\frac{6}{\eps^2} \eta \( \frac{2 \sqrt{2} \tau}{\eps^3},\frac{x^\pm}{\eps} \pm \frac{4 \tau}{\eps^3}
 \)\quad\hbox{ with }\ \eta = 1 - \varrho^2,\\ 
\Theta_\eps^\pm(\tau,x^\pm) & = \frac{6 \sqrt{2}}{\eps} \phi(t,x) =
\frac{6 \sqrt{2}}{\eps} \phi \(\frac{2 \sqrt{2} \tau}{\eps^3} , \frac{x^\pm}{\eps} \pm \frac{4
\tau}{\eps^3}\).
\end{aligned}
\right.
\end{equation}
Setting
\begin{equation}
\label{eq:uv}
\left\{
\begin{aligned}
U_\eps^-(\tau,x^-) = \frac{1}{2} \( N_\eps^-(\tau,x^-)
+ \partial_{x^-} \Theta^-(\tau,x^-) \),\\ 
U_\eps^+(\tau,x^+) = \frac{1}{2} \( N_\eps^+(\tau,x^+)
- \partial_{x^+} \Theta^+(\tau,x^+) \), 
\end{aligned}
\right.
\end{equation}
the main result is (see \cite{BGSS3} for details):
\begin{theorem}
\label{cochondore}
Let $k \ge 0$ and $\eps > 0$ be given. Assume that the initial data
$\psi_0$ belongs to $Y^{k + 6}(\R)$ and satisfies the assumption 
\begin{equation*}
%\label{grinzing1}
\| N_\eps^0 \|_{\mathcal M(\R)} + \|\partial_x \Theta_\eps^0
\|_{\mathcal M (\R)} + \| N_\eps^0 \|_{H^{k + 5}(\R)} + \eps
\| \partial_x^{k + 6} N_\eps^0 \|_{L^2(\R)} + \|\partial_x
\Theta_\eps^0 \|_{H^{k + 5}(\R)} \le K_0. 
\end{equation*}
Let $\mathcal U^-$ and $\mathcal U^+$ denote the solutions to the
Korteweg--de Vries equations 
\begin{equation*}
%\label{KdV}
\partial_\tau \mathcal U^- + \partial_{x^-}^3 \mathcal U^- + \mathcal
U^- \partial_{x^-} \mathcal U^- = 0, 
\end{equation*}
and
\begin{equation*}
%\label{KdV-}
\partial_\tau \mathcal U^ + - \partial_{x^+}^3 \mathcal U^+ - \mathcal
U^+ \partial_{x^+} \mathcal U^+ = 0, 
\end{equation*}
with the same initial value as $U_\eps^-$ and $U_\eps^+$,
respectively. Then, there exist positive constants $\eps_1$ and $K_1$,
depending only on $k$ and $K_0$, such that 
\begin{equation*}
%\label{ineq1}
\| U_\eps^-(\tau,\cdot) - \mathcal U^-(\cdot, \tau) \|_{H^k(\R)} +
\|U_\eps^+(\tau,\cdot) - \mathcal U^+(\cdot, \tau) \|_{H^k(\R)} \le
K_1 \eps^2 \exp \( K_1 |\tau|\),
\end{equation*}
for any $\tau \in \R$ provided $\eps \le \eps_1$.
\end{theorem}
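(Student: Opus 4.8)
The plan is to work entirely in the hydrodynamic (Madelung) formulation \eqref{lamadelon} and to run the reductive perturbation analysis on the \emph{exact} rescaled unknowns, rather than through a formal asymptotic expansion. First I would use the smallness of the data to guarantee that the Ginzburg--Landau energy $\cH(\psi_0)$ stays below the threshold $2\sqrt2/3$, so that $\psi(t)$ never vanishes and admits a global lifting $\psi=\varrho e^{i\phi}$ with $\varrho$ bounded away from $0$ in $Y^{k+6}(\R)$. Propagating this regularity and combining the conservation of $\cH$ with higher-order energy estimates then yields bounds, uniform in $\eps$, for $N_\eps^\pm$, $\Theta_\eps^\pm$ and a fixed number of their derivatives, expressed in the slow variables $(\tau,x^\pm)$. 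These a priori bounds are precisely what will make every remainder term estimable below.

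Next I would substitute the change of scales defining $x^\pm$, $\tau$ and the rescaled functions \eqref{slow-var}--\eqref{eq:uv} into the hydrodynamic system. Writing $\eta=1-\varrho^2$ and $v=2\partial_x\phi$, equation \eqref{lamadelon} becomes a quasilinear system for $(\eta,v)$ whose linearization about $0$ is a dispersive wave operator of speed $\sqrt2$ (compare \eqref{eq:qhdlinear}), the dispersive correction arising from the quantum pressure through $2\partial_x(\partial_{xx}\varrho/\varrho)\approx-\partial_x^3\eta$. Diagonalizing the leading wave operator produces exactly the two Riemann invariants $U_\eps^\pm$ of \eqref{eq:uv}: the quadratic nonlinearity of the hydrodynamic system supplies, after rescaling, the Burgers term $U_\eps^\pm\,\partial_{x^\pm}U_\eps^\pm$, and the quantum pressure supplies the dispersion $\partial_{x^\pm}^3U_\eps^\pm$. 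The upshot is an exact evolution law
\begin{equation*}
\partial_\tau U_\eps^\pm \mp \partial_{x^\pm}^3 U_\eps^\pm \mp U_\eps^\pm\,\partial_{x^\pm}U_\eps^\pm = \eps^2\,\mathcal R_\eps^\pm + \mathcal C_\eps^\pm,
\end{equation*}
where $\mathcal R_\eps^\pm$ gathers the genuinely higher-order self-interaction and dispersion corrections, bounded in $H^k$ thanks to the a priori bounds at the cost of the six extra derivatives, and $\mathcal C_\eps^\pm$ collects the coupling of the $\pm$ family with the counter-propagating family $U_\eps^\mp$.

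The comparison would then be carried out by an energy method on the differences $w^\pm:=U_\eps^\pm-\mathcal U^\pm$, which satisfy $w^\pm|_{\tau=0}=0$ since the KdV data are chosen equal to those of $U_\eps^\pm$. Subtracting the KdV equations, applying $\Lambda^k$ and pairing in $L^2$, the skew-adjointness of $\partial_{x^\pm}^3$ together with the standard commutator and product estimates for the Burgers nonlinearity give, using the uniform bounds on $\mathcal U^\pm$ and $U_\eps^\pm$,
\begin{equation*}
\frac{d}{d\tau}\|w^\pm(\tau)\|_{H^k}^2 \le K_1\|w^\pm(\tau)\|_{H^k}^2 + K_1\eps^4 + K_1\bigl|\langle\Lambda^k w^\pm,\Lambda^k\mathcal C_\eps^\pm\rangle\bigr|,
\end{equation*}
after which, once the coupling contribution is shown to be integrable in $\tau$ with an $O(\eps^2)$ bound, Gronwall's lemma yields $\|w^\pm(\tau)\|_{H^k}\le K_1\eps^2\exp(K_1|\tau|)$.

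The main obstacle is the control of the coupling $\mathcal C_\eps^\pm$: it is a product of a function carried by the frame $x^+$ with one carried by $x^-$, hence not pointwise small. The decisive mechanism is non-resonance: the two frames separate as $x^--x^+=8\tau/\eps^2$, so relative to the slow time $\tau$ the opposite family drifts at the large rate $8/\eps^2$, and only the self-interaction of each family is resonant (that part has already been absorbed into the KdV nonlinearity). I would make this quantitative by integrating $\langle\Lambda^k w^\pm,\Lambda^k\mathcal C_\eps^\pm\rangle$ by parts in $\tau$: each such integration converts the fast drift into a gain of $\eps^2$, so that $\int_0^\tau\langle\Lambda^k w^\pm,\Lambda^k\mathcal C_\eps^\pm\rangle\,d\tau'=O(\eps^2)$ uniformly on the relevant interval. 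Quantifying this separation of counter-propagating waves while tracking the derivative loss is the technical heart of the argument, and is exactly where the regularity level $Y^{k+6}$ and the Sobolev bounds on the data in the hypothesis are spent.
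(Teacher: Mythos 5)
The survey itself contains no proof of Theorem \ref{cochondore}: it refers to \cite{BGSS2,BGSS3} for the argument. Your plan follows the same strategy as those papers --- Madelung/hydrodynamic formulation, slow variables, the Riemann-type invariants $U_\eps^\pm$ of \eqref{eq:uv}, an $H^k$ energy--Gronwall scheme for the differences $w^\pm=U_\eps^\pm-\mathcal U^\pm$, and a non-resonance mechanism exploiting the separation $x^--x^+=8\tau/\eps^2$ of the two frames --- so in outline, the first three paragraphs are sound.

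The gap lies in the last step, which you yourself identify as the technical heart. You claim that the time-integrated coupling is $O(\eps^2)$ ``by integration by parts in $\tau$'', and that what is spent there is the $Y^{k+6}$ regularity and the Sobolev bounds on the data. That cannot close the estimate at the stated rate. When you integrate in $\tau$ a coupling factor of the form $g\bigl(\tau,\,x^--8\tau/\eps^2\bigr)$ (the counter-propagating wave seen in the $x^-$ frame), the integration by parts produces the spatial \emph{primitive} of $g$ evaluated over an interval whose length is of order $8|\tau|/\eps^2$. If one only knows $g\in L^2$ (which is all that Sobolev bounds give: solutions need not be integrable), Cauchy--Schwarz bounds that primitive by $\sqrt{|\tau|}/\eps$ times $\|g\|_{L^2}$, and after multiplication by the Jacobian factor $\eps^2$ coming from the drift, the gain is only $O(\eps\sqrt{|\tau|})$, not $O(\eps^2)$. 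The $\eps^2$ rate requires the primitives of the counter-propagating quantities to be bounded \emph{uniformly}, which is exactly the ${\mathcal M}(\R)$ (bounded primitive) control appearing in the hypothesis of the theorem --- a hypothesis your proof quotes but never uses. Moreover, since the coupling term involves the exact rescaled solution $U_\eps^\mp$ rather than the KdV solution $\mathcal U^\mp$, one must also prove that the ${\mathcal M}(\R)$ bounds on $N_\eps$ and $\partial_x\Theta_\eps$ propagate in $\tau$ under the Gross--Pitaevskii flow; this propagation of the bounded-primitive property is a genuinely nontrivial step of the original proof in \cite{BGSS2,BGSS3} and is entirely absent from your plan. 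Without these two ingredients, the Gronwall loop closes at best with an $O(\eps)$ error, and the conclusion of the theorem is not reached.
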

We now turn to the two (or higher) dimensional case,
 which is  studied in \cite{ChRo10} for a general  nonlinear Schr\"{o}dinger equation of the form
similar to \eqref{eq:GP}
\begin{equation*}
i\d_t\psi+\Delta \psi= f(|\psi|^2)\psi,
\end{equation*}
where $f$ is smooth and satisfies $f(1)=0,$ $f'(1)>0.$

One also uses a ``weakly transverse transonic" scaling, namely
$$T=c\eps^3t, \quad X_1=\eps (x_1-ct),\quad X_j=\eps
^2x_j, \quad j=2,\cdots,d.$$ 
After performing the ansatz 
$$\psi^{\eps}(t,X)=\( 1+\eps ^2 A^{\eps}(t,X)\) \exp (i\eps
\phi^\eps(t,X)),$$ 
the hydrodynamic reformulation of the Gross--Pitaevskii equation is
used to recast the problem as a singular limit for an hyperbolic
system in the spirit of \cite{Grenier98}. 
Then smooth $H^s$ solutions are proven to exist on an interval
independent of the small parameter $\eps$. Passing to the limit by a
compactness argument yields the convergence of the solutions to that
of the KP-I equation. Note however that this method does  not provide
a convergence rate with respect to $\eps$, contrary to the KdV case
considered above.  

In comparison, for such data, Theorem \ref{th:qhdbis} would ensure
that the linear system 
\eqref{eq:qhdlinear} gives a good description of the solution 
only for times that are $o(\eps^{-3})$
 (see the introduction of \cite{BDS08} for more details). 

%%%%%%%%%%%%%%%%%%%%%%%%%%%%%%%%%%%%%%%%%%%%%%%%%%%
%%%%%%%%%%%%%%%%%%%%%%%%%%%%%%%%%%%%%%%%%%%%%%%%%

\section{Global existence of weak solutions to a quantum fluids
  system} \label{sec:global}
We aim at  providing  an elementary  proof of the result
by P.~Antonelli and P.~Marcati 
in \cite{AnMa,AM09} concerning global finite energy weak solutions
to the QHD system. Here is the  statement: 

\begin{theorem}\label{th:AM}
Let the initial data $(\rho_0,\Lambda_0)\in W^{1,1}\times L^2$ be 
``well-prepared'' in the sense that 
there exists some wave function $\psi_0\in H^1$ such that 
$$
\rho_0=|\psi_0|^2\quad\hbox{and}\quad
J_0:=\sqrt{\rho_0}\Lambda_0=\IM(\bar\psi_0\nabla\psi_0).
$$
Assume that $f(r)=r^\sigma$ for some integer $\sigma$
such that $W^{1,1}(\R^d)\hookrightarrow L^{\sigma+1}(\R^d).$

There exist some vector-field $\Lambda\in L^\infty(\R;L^2)$
and some nonnegative function $\rho\in L^\infty(\R;L^1\cap L^{\sigma+1})$
with  $\nabla\sqrt\rho\in L^\infty(\R;L^2)$ such that
the following system holds true in the distributional 
sense\footnote{In the smooth non-vanishing case, the right-hand side of the second equation  
coincides with that of the velocity equation multiplied by $\rho$ in \eqref{eq:qhd} with $\eps=1$, and 
the third equation just means that there exists some function $\phi$ such that $J=\rho\nabla\phi.$}:
\begin{equation}
  \label{eq:qhdgen}
  \left\{
    \begin{aligned}
   &\partial_t\rho+\div J=0,\\[1ex]
&\partial_tJ+\div(\Lambda\otimes\Lambda)+\nabla(P(\rho))=\frac14\Delta\nabla\rho
-\div(\nabla\sqrt\rho\otimes\nabla\sqrt\rho),\\[1ex]
&\partial_j J^k-\partial_kJ^j=2\Lambda^k\partial_j\sqrt\rho
-2\Lambda^j\partial_k\sqrt\rho\quad\hbox{for all }
(j,k)\in \{1,\cdots,d\}^2,\\[1ex]
&(\rho,J)_{|t=0}=(\rho_0,J_0),   
    \end{aligned}
\right.
\end{equation}
with $J:=\sqrt\rho\Lambda$ and $P(\rho):=\rho f(\rho)-F(\rho)$
with $F(\rho)=\int_0^\rho f(\rho')\,d\rho'.$
\medbreak
In addition, the energy 
$$
\int_{\R^d} \Bigl(\frac12|\Lambda|^2+\frac12|\nabla\sqrt\rho|^2+F(\rho)\Bigr)
$$
is conserved for all time. 
\end{theorem}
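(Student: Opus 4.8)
The plan is to bypass the fractional-step construction of \cite{AnMa,AM09} entirely and instead read off the solution of the QHD system directly from a global solution of the underlying nonlinear Schr\"odinger equation. Concretely, with $\eps=1$ I would first solve
\begin{equation*}
i\d_t\psi+\tfrac12\Delta\psi=f(|\psi|^2)\psi\quad;\quad \psi_{|t=0}=\psi_0\in H^1(\R^d),
\end{equation*}
which, for the defocusing nonlinearity $f(r)=r^\sigma$, is globally well posed in $H^1$ with conserved mass and Hamiltonian $\cH(\psi)=\int\bigl(\tfrac12|\nabla\psi|^2+F(|\psi|^2)\bigr)$; in particular $\psi\in L^\infty(\R;H^1)$. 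This is the only genuine ``PDE input'' of the whole argument and it is classical. The hypothesis $W^{1,1}(\R^d)\hookrightarrow L^{\sigma+1}(\R^d)$ enters precisely here: since $\rho:=|\psi|^2\in L^1$ and $\nabla\rho=2\RE(\bar\psi\nabla\psi)\in L^1$, one has $\rho\in W^{1,1}$, hence $\rho\in L^{\sigma+1}$ and $F(\rho)\in L^1$, so that $\cH(\psi)$ is finite and controls the solution.

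The second ingredient is the polar factorization of $\psi$. I would invoke the standard lemma asserting that there is a measurable $\phi$ with $|\phi|\le1$ and $|\phi|=1$ a.e.\ on $\{\rho>0\}$, such that $\psi=\sqrt\rho\,\phi$ and the chain rule $\nabla\sqrt\rho=\RE(\bar\phi\nabla\psi)$ holds a.e. I then set
\begin{equation*}
\rho=|\psi|^2,\qquad \Lambda=\IM(\bar\phi\nabla\psi),\qquad J=\IM(\bar\psi\nabla\psi)=\sqrt\rho\,\Lambda.
\end{equation*}
From $\psi\in L^\infty(\R;H^1)$ one gets immediately $\rho\in L^\infty(\R;L^1\cap L^{\sigma+1})$, $\nabla\sqrt\rho\in L^\infty(\R;L^2)$ and $\Lambda\in L^\infty(\R;L^2)$, exactly the regularities claimed, while the well-preparedness of the data ensures $(\rho,J)_{|t=0}=(\rho_0,J_0)$. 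The decisive pointwise identity, valid a.e.\ (and trivially on the vacuum set, where both $\Lambda$ and $\nabla\sqrt\rho$ vanish), is
\begin{equation*}
\RE(\d_j\bar\psi\,\d_k\psi)=\d_j\sqrt\rho\,\d_k\sqrt\rho+\Lambda^j\Lambda^k,
\end{equation*}
obtained by writing $\d_k\psi=\phi\bigl(\d_k\sqrt\rho+i\Lambda^k\bigr)$ on $\{\rho>0\}$.

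With these quantities in hand each equation of \eqref{eq:qhdgen} follows by an elementary computation. Differentiating $\rho=\bar\psi\psi$ in time and using the equation gives $\d_t\rho=-\div J$. For the momentum I would differentiate $J=\IM(\bar\psi\nabla\psi)$ in time, substitute $\d_t\psi=\tfrac i2\Delta\psi-if(\rho)\psi$, and reorganize the result into the quantum momentum balance
\begin{equation*}
\d_tJ+\div\bigl(\RE(\nabla\bar\psi\otimes\nabla\psi)\bigr)+\nabla P(\rho)=\tfrac14\Delta\nabla\rho,
\end{equation*}
after which the tensor identity above converts $\RE(\nabla\bar\psi\otimes\nabla\psi)$ into $\Lambda\otimes\Lambda+\nabla\sqrt\rho\otimes\nabla\sqrt\rho$. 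The generalized irrotationality relation is purely algebraic: writing $J^k=\rho\,S^k$ with $S=\IM(\bar\phi\nabla\phi)$ formally curl free, one reads off $\d_jJ^k-\d_kJ^j=2\Lambda^k\d_j\sqrt\rho-2\Lambda^j\d_k\sqrt\rho$. Finally, the trace of the tensor identity gives $|\nabla\psi|^2=|\nabla\sqrt\rho|^2+|\Lambda|^2$ a.e., so inserting it into the conserved $\cH(\psi)$ shows that $\int\bigl(\tfrac12|\Lambda|^2+\tfrac12|\nabla\sqrt\rho|^2+F(\rho)\bigr)$ is conserved, the energy of the QHD system being inherited for free from conservation of $\cH$.

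The step I expect to require the most care is the rigorous justification of the polar factorization and of the a.e.\ identities across the vacuum set $\{\psi=0\}$: one must know that $\nabla\sqrt\rho=\nabla|\psi|$ is a genuine $L^2$ function vanishing a.e.\ on $\{\psi=0\}$ (a Stampacchia-type fact), that $\Lambda$ is likewise a bona fide $L^2$ field despite $\Lambda=J/\sqrt\rho$ being formally singular there, and that the quadratic identities and the momentum computation—carried out formally on $\{\rho>0\}$—actually hold in $\mathcal D'(\R^d)$ for every $t$. Once this factorization machinery is in place, each equation of \eqref{eq:qhdgen} reduces to a one-line manipulation, which is exactly what makes the argument simpler than the splitting scheme of \cite{AnMa,AM09}.
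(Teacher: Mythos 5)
Your core strategy---solving NLS globally in $H^1$, factorizing $\psi=\sqrt\rho\,\phi$ with $\nabla\sqrt\rho=\RE(\bar\phi\nabla\psi)$, $\Lambda=\IM(\bar\phi\nabla\psi)$, and converting $\RE(\nabla\bar\psi\otimes\nabla\psi)$ into $\nabla\sqrt\rho\otimes\nabla\sqrt\rho+\Lambda\otimes\Lambda$ a.e.---is exactly the paper's. The genuine gap is the claim that, once this factorization machinery is in place, each equation of \eqref{eq:qhdgen} ``follows by an elementary computation'' directly at the $H^1$ level. For an $H^1$ solution one only has $\d_t\psi=\tfrac i2\Delta\psi-if(\rho)\psi\in C(\R;H^{-1})$, so the products $\IM(\d_t\bar\psi\,\nabla\psi)$ and $\IM(\bar\psi\,\nabla\d_t\psi)$ needed to compute $\d_tJ$ pair an $H^{-1}$ distribution against an $L^2$ function and are not defined; likewise, rearranging $\RE\bigl(\bar\psi\nabla\Delta\psi-\Delta\bar\psi\nabla\psi\bigr)$ into $\tfrac12\nabla\Delta\rho-2\div\RE(\nabla\bar\psi\otimes\nabla\psi)$, and the pressure term into $-\nabla P(\rho)$, uses second and third derivatives of $\psi$ (and $\psi\in C^1$) pointwise. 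These manipulations are legitimate only for smooth solutions; flagging them as ``the step requiring the most care'' and then calling them one-line manipulations does not close the argument.

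This is precisely why the paper's proof has two tiers. First it takes $\psi_0\in H^s$ with $s$ large, where all of the above is classical and the vacuum set is handled by the fact (Theorem 6.19 in \cite{LL}) that $\nabla\psi=0$ a.e.\ on $\{\psi=0\}$. Then, for $\psi_0\in H^1$, it approximates by smooth data $\psi_{0,n}$ and passes to the limit in the system satisfied by $(\rho_n,\Lambda_n,J_n)$. That limit passage is the nontrivial point missing from your plan: to pass to the limit in the quadratic terms $\Lambda_n\otimes\Lambda_n$ and $\nabla\sqrt{\rho_n}\otimes\nabla\sqrt{\rho_n}$, and in the energy equality, one needs \emph{strong} $L^2$ convergence of $\Lambda_n$ and $\nabla\sqrt{\rho_n}$, which does not follow from the $H^1$ stability of the NLS flow because $\phi_n$ depends discontinuously on $\psi_n$. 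The paper first gets weak $L^2$ convergence (weak-$*$ compactness of the bounded sequence $(\phi_n)$ combined with $\nabla\psi_n\to\nabla\psi$ in $L^2$), and then upgrades it to strong convergence using the Pythagorean identity $\|\nabla\psi_n\|_{L^2}^2=\|\nabla\sqrt{\rho_n}\|_{L^2}^2+\|\Lambda_n\|_{L^2}^2$ together with weak lower semicontinuity of the norms. Your proposal becomes the paper's proof once you replace the direct $H^1$ computation by this regularize-and-pass-to-the-limit scheme.
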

\begin{proof}
Let us first prove the statement in the smooth case, namely 
we assume that the data $\psi_0$ is in $H^s$ for some large enough $s.$
It is well known that 
\begin{equation}\label{eq:NLS}
  i\d_t \psi+ \frac{1}{2}\Delta \psi = f\(|\psi|^2\)\psi\quad ;\quad \psi_{\mid
    t=0}=\psi_0
\end{equation}
 has a unique solution $\psi$ in 
$C(\R;H^s)$  whenever $s\ge 1$ (see e.g. \cite{CazCourant,LP,Tao}) and that
\begin{equation}\label{eq:energypsi}
\forall
t\in\R,\;\int_{\R^d}\(\frac12|\nabla\psi(t)|^2+F\(|\psi(t)|^2\)\)= 
\int_{\R^d}\(\frac12|\nabla\psi_{0}|^2+F\(|\psi_{0}|^2\)\).
\end{equation}
Let us set $\rho:=|\psi|^2$ and 
\begin{equation}\label{eq:phi}
\phi(x):=\left\{\begin{array}{lll} |\psi(x)|^{-1}\psi(x)&\hbox{if}&\psi(x)\not=0,\\[1ex]
0&\hbox{if}& \psi(x)=0.\end{array}
\right.\end{equation}
We claim that 
\begin{equation}\label{eq:sqrtrho}
 \nabla\sqrt\rho=\RE(\bar\phi\nabla\psi)\quad\hbox{a. e. }
\end{equation}
Indeed, for $\eps>0,$  let us 
set $\phi_\eps:=\psi/\sqrt{|\psi|^2+\eps^2}.$
Then  $(\phi_\eps)$ converges pointwise to $\phi$
and an easy computation shows that 
\begin{equation}\label{eq:cu}
\bar\phi_\eps\psi\Tend\eps 0 \sqrt\rho\quad\hbox{uniformly}.
\end{equation}
Next, we compute
\begin{equation}\label{eq:dec}
\nabla(\bar\phi_\eps\psi)=\RE(\bar\phi_\eps\nabla\psi)+
\RE(\psi\nabla\bar\phi_\eps).  
\end{equation}
The first term in the right-hand side converges pointwise
to $\RE(\bar\phi\nabla\psi)$ hence in $L^1_{\rm loc}$
owing to Lebesgue's theorem as it is bounded by $|\nabla\psi|.$ 

As for the last term, we have
$$
\RE(\psi\nabla\bar\phi_\eps)=\frac{\eps^2}{(\eps^2+|\psi|^2)^{3/2}}
\RE(\bar\psi\nabla\psi). 
$$
It is clear that the right-hand side converges pointwise to $0$
and is bounded by $|\nabla\psi|.$ Hence it 
also converges to $0$ in $L^1_{\rm loc}.$

So finally, putting these two results together with \eqref{eq:cu} and \eqref{eq:dec}, 
one may conclude to \eqref{eq:sqrtrho}.
\smallbreak

 Let $\Lambda:=\IM(\bar\phi\nabla\psi)$ and $J:=\sqrt\rho\Lambda.$
 We claim that $(\rho,\Lambda,J)$ satisfies \eqref{eq:qhdgen}.
 Indeed, from \eqref{eq:NLS}, we see that 
 $$
 \partial_t(|\psi|^2)=2\RE(\bar\psi\partial_t\psi)
 =-\IM(\bar\psi\Delta\psi)
 =-\IM\div(\bar\psi\nabla\psi),
 $$
 hence
 \begin{equation}\label{eq:rho}
 \partial_t\rho+\div J=0.
 \end{equation}
Next, we compute
 $$\partial_j J^k-\partial_kJ^j=\IM\bigl(\partial_j(\bar\psi\partial_k\psi)
 -\partial_k(\bar\psi\partial_j\psi)\bigr)
 =2 \IM\bigl(\partial_j\bar\psi\partial_k\psi\bigr).
 $$
 Recall (see e.g. Theorem 6.19 in \cite{LL})
 that 
\begin{equation}\label{eq:pseudosard}
 \nabla\psi=0 \quad\hbox{a. e.  on}\quad \psi^{-1}(\{0\})
 \end{equation}
 whenever $\psi$ is locally in $W^{1,1}.$
 \medbreak

Therefore, given that $|\phi|=1$ on $\psi^{-1}(\C\setminus\{0\}),$
  one may write a. e. 
 $$
 \IM\bigl(\partial_j\bar\psi\partial_k\psi\bigr)
 = \IM\bigl(\phi\partial_j\bar\psi\,\bar\phi\partial_k\psi\bigr)
=\RE(\phi\partial_j\bar\psi)\,\IM(\bar\phi\partial_k\psi)
+\RE(\bar\phi\partial_k\psi)\,\IM(\phi\partial_j\bar\psi).
$$
So   we eventually get
 \begin{equation}\label{eq:curl}
 \partial_j J^k-\partial_kJ^j=2\partial_j\sqrt\rho\:\Lambda^k
 -2\partial_k\sqrt\rho\:\Lambda^j.
\end{equation}
Next, we compute 
$$
\partial_tJ=\IM\bigl(\partial_t\bar\psi\,\nabla\psi+\bar\psi\nabla\partial_t\psi\bigr).
$$
Hence, using the equation satisfied by $\psi,$ we get
$$
\partial_tJ=
\frac12\underbrace{\RE\bigl(\bar\psi\nabla\Delta\psi-\Delta\bar\psi\nabla\psi\bigr)}_{A}
+\underbrace{\RE\bigl(f(|\psi|^2)\bar\psi\nabla\psi-\bar\psi\nabla(f(|\psi|^2)\psi)\bigr)}_{B}.
$$
If $\psi$ is $C^1$ and the function $f$ has a derivative at every point of $\R^+,$ then 
straightforward computations show that 
\begin{equation}\label{eq:B}
B=-\rho f'(\rho)\nabla\rho=-\nabla(P(\rho)).
\end{equation}
Next, we see that (still in the smooth case)
$$
\nabla\Delta|\psi|^2=4\RE\bigl(\nabla^2\psi:\nabla\bar\psi\bigr)
+2\RE\bigl(\nabla\psi\Delta\bar\psi\bigr)
+2\RE\bigl(\bar\psi\nabla\Delta\psi\bigr).$$
Hence 
$$
A=\frac12\nabla\Delta|\psi|^2
-2\RE\bigl(\Delta\bar\psi\nabla\psi\bigr)-2\RE\bigl(\nabla^2\psi:\nabla\bar\psi\bigr).
$$
So we get
\begin{equation}\label{eq:A}
\frac12 A=\frac14\nabla\Delta|\psi|^2-\div\RE(\nabla\bar\psi\otimes\nabla\psi).
\end{equation}
Now, using again \eqref{eq:pseudosard}, one may write at
almost every point of $\R^d,$ 
$$\begin{array}{lll}
\RE(\nabla\bar\psi\otimes\nabla\psi)&=&
\RE(\phi\nabla\bar\psi\otimes\bar\phi\nabla\psi),\\[1ex]
&=&\RE(\bar\phi\nabla\psi)\otimes\RE(\bar\phi\nabla\psi)
+\IM(\bar\phi\nabla\psi)\otimes\IM(\bar\phi\nabla\psi).\end{array}
$$
Therefore, we have
\begin{equation}\label{eq:Psi}
\RE(\nabla\bar\psi\otimes\nabla\psi)=\nabla\sqrt\rho\otimes\nabla\sqrt\rho
+\Lambda\otimes\Lambda.
\end{equation}
Putting this together with \eqref{eq:B} and \eqref{eq:A},
 one may conclude that
\begin{equation}\label{eq:J}
\partial_tJ+\div(\Lambda\otimes\Lambda)+\nabla(P(\rho))
=\frac14\Delta\nabla\rho-\div(\nabla\sqrt\rho\otimes\nabla\sqrt\rho).
\end{equation}
Of course, as owing to \eqref{eq:pseudosard}
\begin{equation}\label{eq:mod}
|\nabla\psi|^2
=(\RE(\bar\phi\nabla\psi))^2+(\IM(\bar\phi\nabla\psi))^2
=|\nabla\sqrt\rho|^2+|\Lambda|^2\quad\hbox{a. e.}, 
\end{equation}
the energy equality for $\psi$ recasts in 
\begin{equation}\label{eq:energyJ}
\int_{\R^d}
\Bigl(\frac12|\Lambda(t)|^2+\frac12|\nabla\sqrt{\rho(t)}|^2+F\(\rho(t)\)\Bigr) 
=\int_{\R^d}
\Bigl(\frac12|\Lambda_{0}|^2+\frac12|\nabla\sqrt{\rho_{0}}|^2+F(\rho_{0})\Bigr). 
\end{equation}
This completes the proof in the smooth case.
\medbreak

Let us now  treat the rough case where $\psi_0$ belongs only to $H^1.$
Then we  fix some sequence $(\psi_{0,n})_{n\in\N}$ of functions in $H^s$ (with $s$ large)
converging to $\psi_0$ in $H^1.$
Let us denote by $\psi_n$ the solution of \eqref{eq:NLS}
in $C(\R;H^s)$ corresponding to the data  $\psi_{0,n},$
and by $\psi\in C(\R;H^1)$ the solution to \eqref{eq:NLS}
with data $\psi_0.$

From the first part of the proof, we know that there 
exists some sequence $(\phi_n)_{n\in\N}$ of functions
with modulus at most $1$ such that if we set 
 $\Lambda_n:=\IM(\bar\phi_n\nabla\psi_n),$
 $\rho_n:=|\psi_n|^2$ and $J_n:=\sqrt{\rho_n}\Lambda_n$
then 
$$
\nabla\sqrt{\rho_n}=\RE(\bar\phi_n\nabla\psi_n) \quad\hbox{in }\ L^2,
$$
and 
 $(\rho_{n},\Lambda_{n},J_{n})$ satisfies~\eqref{eq:qhdgen}
with data $(\rho_{0,n},J_{0,n}),$
together with the energy equality
\begin{eqnarray}\label{eq:energyJn}
&&\int_{\R^d}
\Bigl(\frac12|\Lambda_n(t)|^2+\frac12|\nabla\sqrt{\rho_n(t)}|^2+
F\(\rho_n(t)\)\Bigr) 
\\&&\hspace{3cm}=\int_{\R^d}
\Bigl(\frac12|\Lambda_{0,n}|^2+\frac12|\nabla\sqrt{\rho_{0,n}}|^2
+F(\rho_{0,n})\Bigr). 
\nonumber\end{eqnarray}
We now have to prove the convergence of $(\rho_n,\Lambda_n,J_n)$
to some solution $(\rho,\Lambda,J)$ of \eqref{eq:qhdgen} satisfying the energy
equality. 

On the one hand, standard  stability estimates (based
on Strichartz inequalities) guarantee that
\begin{equation}\label{eq:convpsi}
\psi_n\longrightarrow\psi\quad\hbox{in}\quad L^\infty_{\rm loc}(\R;H^1).
\end{equation} 
On the other hand, because
$(\phi_n)_{n\in\N}$ is bounded by $1,$ 
it converges  (up to extraction) in $L^\infty$ weak * 
to some function $\phi$ such that $\|\phi\|_{L^\infty}\le1.$
As $\nabla\psi_n$ converges strongly to $\nabla\psi$ in $L^2,$
this implies that 
\begin{equation}\label{eq:weakconv}
\bar\phi_n\nabla\psi_n\rightharpoonup\bar\phi\nabla\psi\quad\hbox{in }\ L^2.
\end{equation}
In turn, as obviously  $\sqrt{\rho_n}\rightarrow\sqrt\rho$ in 
$L^2$ and as  $\nabla\sqrt{\rho_n}=\RE(\bar\phi_n\nabla\psi_n),$ we deduce   that
$$
\nabla\sqrt\rho=\RE(\bar\phi\nabla\psi)\quad\hbox{and}\quad
\nabla\sqrt{\rho_n}\rightharpoonup \nabla\sqrt\rho\ \hbox{ in }\ L^2.
$$
Given that $\Lambda_n=\IM(\bar\phi_n\nabla\psi_n),$ 
\eqref{eq:weakconv} also ensures  that 
$$
\Lambda_n\rightharpoonup \Lambda:=\IM(\bar\phi\nabla\psi)\quad\hbox{in }\ L^2.
$$
In order to establish that strong convergence in $L^2$ holds true, 
it suffices to show that (up to an omitted extraction) 
\begin{equation}\label{eq:strong}
\|\nabla\sqrt{\rho_n}\|_{L^2}\rightarrow\|\nabla\sqrt\rho\|_{L^2}
\quad\hbox{and}\quad
\|\Lambda_n\|_{L^2}\rightarrow\|\Lambda\|_{L^2}.
\end{equation}
On the one hand, the weak convergence ensures that
$$
\|\nabla\sqrt\rho\|_{L^2}\le \liminf\|\nabla\sqrt{\rho_n}\|_{L^2}
\quad\hbox{and}\quad
\|\Lambda\|_{L^2}\le\liminf \|\Lambda_n\|_{L^2};
$$
on the other hand, given that $(\nabla\psi_n)_{n\in\N}$ converges strongly to $\nabla\psi$
in $L^2$ and that \eqref{eq:mod} holds true for $\psi$ and $\psi_n,$ we may write
\begin{align*}
\|\nabla\sqrt\rho\|_{L^2}^2+\|\Lambda\|_{L^2}^2&=\|\nabla\psi\|_{L^2}^2,\\[1ex]
&=\lim_{n\rightarrow+\infty}\|\nabla\psi_n\|_{L^2}^2,\\[1ex]
&=\lim_{n\rightarrow+\infty}\bigl(\|\nabla\sqrt{\rho_n}\|_{L^2}^2+\|\Lambda_n\|_{L^2}^2\bigr),\\[1ex]
&\ge\bigl(\liminf \|\nabla\sqrt{\rho_n}\|_{L^2}\bigr)^2+\bigl(\liminf \|\Lambda_n\|_{L^2}\bigr)^2.
\end{align*}
Therefore \eqref{eq:strong} is satisfied.
As a conclusion, we thus  have established that 
$$
\sqrt{\rho_n}\rightarrow\sqrt\rho\ \hbox{ in }\ H^1\quad\hbox{and}\quad
\Lambda_n\rightarrow\Lambda\ \hbox{ in } L^2.
$$
Of course, this implies that $J_n\rightarrow J:=\sqrt\rho\Lambda$ in $L^1$
so it is easy to pass to the limit in \eqref{eq:qhdgen} and in the energy equality \eqref{eq:energyJ}. 
The details are left to the reader.
\end{proof}

%%%%%%%%%%%%%%%%%%%%%%%%%%%%%%%%%%%%%%%%%%
\appendix
\section{Conservation laws}

In this Appendix we review conservations laws for the nonlinear
Schr\"odinger, QHD and compressible Euler equations. 
Even though most of the results are classical (as concerns
the Schr\"odinger equation, they may be found in 
the textbooks \cite{Sulem,Tao} for instance; links between
Schr\"odinger and Euler conservation laws may be found in \cite{CaBook}), 
we believe the relationships between the aforementioned
equations to be of interest. In addition,  
those conservation laws are still meaningful for  the less classical
framework of general Korteweg fluids.

\subsection{The case of  Schr\"odinger,  QHD  and  compressible Euler equations}

For the time being, we consider the following system 
\begin{equation}\label{QHD_eps}
  \left\{
    \begin{aligned}
      &\d_t v +v\cdot \nabla v+
      \nabla f\(\rho\)= \frac{\eps^2}{2}\nabla\( \frac{\Delta\(\sqrt
      \rho\)}{\sqrt \rho}\)\quad ;\quad 
      v_{\mid t=0} =v_0,\\
& \d_t \rho  + \DIV \(\rho v\) =0\quad ;\quad \rho_{\mid
      t=0}=\rho_0,
    \end{aligned}
\right.
\end{equation}
which is the QHD system if $\eps>0,$ and the compressible
Euler equation if $\eps=0$,
and the nonlinear  Schr\"odinger equation:
\begin{equation}
  \label{NLS_eps}
  i\eps\d_t\psi+\frac{\eps^2}2\Delta\psi=f(|\psi|^2)\psi\quad;\quad
\psi_{\mid t=0}=\psi_0.
\end{equation}
Recall that for $\eps>0,$ one may pass formally from  \eqref{NLS_eps} to
\eqref{QHD_eps}  by setting 
$$
\psi=\sqrt\rho\,e^{i\phi/\eps}\quad\hbox{and}\quad
v=\nabla \phi.
$$
In what follows, the function $f$ is assumed to be 
continuous on $\R^+$ and, say,
$C^1$ on $(0,+\infty[,$  standard cases being 
$f(r)=r^\sigma$  and $f(r)=r-1.$
We denote by $F$ the anti-derivative of $f$ which vanishes at  $0,$
and set $P(\rho)=\rho f(\rho)-F(\rho).$
As pointed out before, from a physical viewpoint, $P$ is the pressure.

\smallbreak
The first part of the appendix aims at listing (and deriving formally) 
the classical conservation laws for  \eqref{NLS_eps} and  \eqref{QHD_eps}.

%%%%%%%%%%%%%%%%%%%%%%%%%%%%%%%%%%%%%%

\subsubsection*{Phase invariance}

For every $\alpha\in\R,$ one has 
$$\psi\ \hbox{ solution of  }\  \eqref{NLS_eps} \iff 
e^{i\alpha} \psi \hbox{ solution of }\  \eqref{NLS_eps}.
$$
The phase invariance is not seen at the level of \eqref{QHD_eps}
(this amount to changing  $\phi$ into  $\phi+\eps\alpha$).  

By Noether's theorem or by an easy computation, this leads to
the conservation of \emph{mass}:
\begin{equation}\label{eq:M}
\cM:=\int|\psi|^2\,dx=\int\rho\,dx.
\end{equation}

%%%%%%%%%%%%%%%%%%%%%%%%%%%%%%%%%%%%%%

\subsubsection*{Time translation invariance}

For every $\tau\in\R,$ one has  
$$\psi(t,x)\ \hbox{ solution of }\  \eqref{NLS_eps} \iff 
\psi(t+\tau,x) \hbox{ solution of }\  \eqref{NLS_eps}.
$$

By the same time translation, this is expressed at the level of
  \eqref{QHD_eps}
and leads to the conservation of the \emph{energy} (or \emph{Hamiltonian}):
\begin{equation}\label{eq:E}
\cH:=\int\Bigl(\frac{\eps^2}2|\nabla\psi|^2+F(|\psi|^2)\Bigr)\,dx
=\int\Bigl(\frac12\rho|v|^2+\frac{\eps^2}2|\nabla\sqrt\rho|^2+F(\rho)\Bigr)\,dx.
\end{equation}

%%%%%%%%%%%%%%%%%%%%%%%%%%%%

\subsubsection*{Space translation invariance}

For every  $x_0\in\R^d,$ one has 
$$\psi(t,x)\ \hbox{ solution to }\  \eqref{NLS_eps} \iff 
\psi(t,x+x_0) \hbox{ solution to }\  \eqref{NLS_eps}.
$$
 By the same space  translation, this is expressed at the level of
 \eqref{QHD_eps} 
and leads to the  conservation of \emph{momentum}:
\begin{equation}\label{eq:P}
\cP:=\IM\int\eps\bar\psi\nabla\psi\,dx=\int\rho v\,dx.
\end{equation}

%%%%%%%%%%%%%%%%%%%%%%%%%%%%

\subsubsection*{Invariance by spatial  rotation}

Let  $R$ be a spatial  rotation. Then
$$\psi(t,x)\ \hbox{ solution of }\  \eqref{NLS_eps} \iff 
\psi(t,Rx) \hbox{ solution of }\  \eqref{NLS_eps}.
$$
By the same spatial rotation on a  solution 
of \eqref{QHD_eps}
this leads to the conservation of  \emph{angular momentum}, which we
write in the case of $\R^3$ for the sake of simplicity:
\begin{equation}\label{eq:cA}
\cA:=\IM\int x\wedge \eps\bar\psi\nabla\psi\,dx=\int x\wedge\rho v\,dx.
\end{equation}

%%%%%%%%%%%%%%%%%%%%%%%%%%%%

\subsubsection*{Galilean invariance}

For every  $\xi_0\in\R^d,$ one has 
$$\psi(t,x)\ \hbox{ solution of }\  \eqref{NLS_eps} \!\iff\! 
e^{-i\xi_0\cdot x} e^{-i\frac{t\eps}2|\xi_0|^2}
\psi(t,x+\eps\xi_0t) \hbox{ solution of }\  \eqref{NLS_eps}.
$$
For \eqref{QHD_eps}, this implies
$$
(v,\rho)(t,x)\ \hbox{ solution} \iff 
\bigl(v(t,x+\eps\xi_0 t)-\eps\xi_0,\rho(t,x+\eps\xi_0 t)\bigr)
\ \hbox{ solution},
$$
and leads to 
\begin{equation}\label{eq:X}
\frac{d\cX}{dt}=\cP\quad\hbox{with}\ \cX:=\int x|\psi|^2\,dx=\int x\rho\,dx.
\end{equation}

%%%%%%%%%%%%%%%%%%%%%%%%%%%%

\subsubsection*{Scale invariance } 

When  $f(r)=r^\sigma,$ one has for all  $\lambda>0,$
$$\psi(t,x)\ \hbox{ solution of }\  \eqref{NLS_eps} \iff 
\lambda^{1/\sigma}\psi(\lambda^2t,\lambda x) \hbox{ solution of }\
\eqref{NLS_eps}, 
$$
which implies for \eqref{QHD_eps}  
$$
(v,\rho)(t,x)\ \hbox{ solution}\iff 
\bigl(\lambda v,\lambda^{2/\sigma}\rho\bigr)(\lambda^2t,\lambda x)
\ \hbox{ solution.}
$$
The associated conservation law is (recall that $d$ stands for
the space dimension):
$$
\frac{d\cF}{dt}=\int\Bigl(\eps^2|\nabla\Psi|^2+dP(|\psi|^2)\Bigr)\,dx
=2\cH+\int(dP-2F)(|\psi|^2)\,dx
$$
or, 
$$
\frac{d\cF}{dt}=\int\Bigl(\rho |v|^2+dP(\rho)\Bigr)\,dx
=2\cH+\int(dP-2F)(\rho)\,dx
$$
with
\begin{equation}\label{eq:F}
\cF:=\IM\int \eps\bar\psi x\cdot\nabla\psi =\int \rho x\cdot
v\,dx\quad\hbox{and }\ P(r):=rf(r)-F(r). 
\end{equation}
Equality \eqref{eq:F} remains formally true if the nonlinearity is not a
pure power.

%%%%%%%%%%%%%%%%%%%%%%%%%%%%

\subsubsection*{Momentum of inertia or  virial} 

A direct computation shows that the  momentum of inertia  (or virial)
\begin{equation}\label{eq:I}
\cI:=\frac12\int|x|^2|\psi|^2\,dx=\frac12\int|x|^2\rho\,dx
\end{equation}
satisfies  $\displaystyle{\frac {d\cI}{dt}=\cF.}$

%%%%%%%%%%%%%%%%%%%%%%%%%%%%

\subsubsection*{Pseudo-conformal invariance}

Let $\varphi$ the pseudo-conformal transform of $\psi$ defined by 
$$
\varphi(t,x)=\frac{e^{\frac{i|x|^2}{2\eps t}}}{\(i\frac t\eps\)^{d/2}}
\bar\psi\(\frac{\eps^2}t,\frac xt\)\cdotp
$$
One notices that  
$$
\Bigl(i\eps\d_t\varphi+\frac{\eps^2}2\Delta\varphi\Bigr)(t,x)
=\frac{\eps^2}{t^2}\frac{e^{\frac{i|x|^2}{2\eps t}}}{(i\frac t\eps)^{\frac n2}}
\overline{\Bigl(i\eps\d_t\psi+\frac{\eps^2}2\Delta \psi\Bigr)}
\(\frac{\eps^2}t,\frac xt\).
$$
Thus
$$i\eps\d_t\varphi+\frac{\eps^2}2\Delta\varphi=\frac{\eps^2}{t^2} f\(\(\frac
t\eps\)^d|\varphi|^2\)\varphi. 
$$
For the $L^2$-critical power nonlinearity  $f(r)=r^{2/d},$
one checks that 
$$
\psi(t,x)\ \hbox{ solution of }\  \eqref{NLS_eps} \iff 
\varphi(t,x)    \hbox{ solution of }\  \eqref{NLS_eps}.
$$
For \eqref{QHD_eps}, this yields
$$
(v,\rho)(t,x)\ \hbox{ solution} \iff 
\biggl(\frac xt-\frac vt\Bigl(\frac{\eps^2}t,\frac xt\Bigr),\biggl(\frac\eps t\biggr)^d\rho\Bigl(\frac{\eps^2}t,\frac xt\Bigr)\biggr)
\   \hbox{ solution}.
$$
Using the conservation of energy for  $\varphi,$
one deduces after a lengthy computation that
\begin{equation}\label{eq:Z}
\frac{d\cZ}{dt}+t\int(dP-2F)(|\psi|^2)\,dx=0
\end{equation}
with
$$\begin{array}{lll}
\cZ(t)&\!\!:=\!\!&\Int\Bigl( \frac12|(x+i\eps t\nabla)\psi|^2+t^2F(|\psi|^2)\Bigr)\,dx,\\[1.5ex]
&\!\!=\!\!&\Int\Bigl( \frac12\rho|x-tv|^2+\frac12\eps^2t^2|\nabla\sqrt\rho|^2+t^2F(|\psi|^2)\Bigr)\,dx.
\end{array}
$$
This equality can be proven more simply by using the fact that  
the operators  $i\eps\d_t +\frac{\eps^2}2\Delta$ and $x+i\eps t\nabla$ commute. 
The  conservation law for  $\cZ$  is not independent from the
preceding ones: by expanding the square of the modulus, one observes
that  
$$
\cZ(t)=t^2\cH-t\cF+\cI.
$$
Consequently, 
$$
\frac{d\cZ}{dt}=2t\cH-\cF-t\frac{d\cF}{dt}+\frac{d\cI}{dt}
=2t\cH-t\frac{d\cF}{dt},
$$
and one recovers  \eqref{eq:Z}. 

%%%%%%%%%%%%%%%%%%%%%%%%%%%%

\subsubsection*{The  Carles-Nakamura conservation law}

Formally, the quantity
$$
\cU(t):=\RE\int\bar \psi\:(x+i\eps t\nabla)\psi\,dx
$$
is constant. 
This can be checked by an indirect way (see \cite{CaNa04})
or by a direct computation. 
The interpretation in terms of \eqref{QHD_eps} is
\begin{equation}\label{eq:U}
\frac d{dt}\cU=0\quad\hbox{with}\quad
\cU(t)=\int\rho(x-tv)\,dx.
\end{equation}
This conservation law also results from the conservation of momentum
and from the conservation law for $\cX$ since in both cases, 
 \eqref{NLS_eps} or  \eqref{QHD_eps}, one has
$$
\cU=\cX-t\cP.
$$

%%%%%%%%%%%%%%%%%%%%%%%%%%%%%%%%

\subsection{Conservation laws for general capillary fluids}
We here study to what extent the conservation laws listed 
in the previous subsection are relevant for general inviscid capillary fluids. 
We recall that such fluids are governed by System
\eqref{eq:capillary}. 
%  \begin{equation}\label{eq:capillary}
%\left\{\begin{array}{l}
 %\partial_t\rho+\DIV(\rho v)=0,\\[1ex]
 %\partial_tv+v\cdot\nabla v+\nabla f(\rho)=
 %\nabla\Bigl(\kappa(\rho)\Delta\rho+\displaystyle\frac12\kappa'(\rho)|\nabla\rho|^2\Bigr).
  %\end{array}\right.
 %\end{equation}
Throughout, we assume  the capillarity $\kappa$ to be  a
differentiable function on  $\R^+.$  
 %The case of QHD would correspond to
 %$\dis\kappa(\rho)=\frac{\eps^2}{4\rho}\cdotp$ 
 \smallbreak
For smooth solutions with a non vanishing density, 
System \eqref{eq:capillary} recasts in the following conservative form\footnote{With
the convention  $(\div K)^j:=\sum_i\d_i K_{ij}$}:
 \begin{equation}\label{eq:capillarycons}
\left\{\begin{array}{l}
 \partial_t(\rho v)+\div(\rho v\otimes v)+\nabla P(\rho)=\div K ,\\[1ex]
 \partial_t\rho+\DIV(\rho v)=0,
  \end{array}\right.
 \end{equation}
 with
 $K(\rho,\nabla\rho):=\Bigl(\frac12(\kappa(\rho)+\rho\kappa'(\rho))
|\nabla\rho^2| 
 +\rho\kappa(\rho)\Delta\rho\Bigr)I_d-
\kappa(\rho)\nabla\rho\otimes\nabla\rho.$
\begin{theorem} If $(\rho,v)$ is a sufficiently smooth solution of
  $\eqref{eq:capillary}$ which 
  decays at infinity,  with $\rho$ non
  vanishing, then the equalities $\eqref{eq:M},$  $\eqref{eq:P},$
$\eqref{eq:cA},$ $\eqref{eq:X}$ and  $\eqref{eq:I}$ are still valid, 
and the energy 
$$
\cH:=\int\Bigl(\frac12\rho|v|^2+\frac{\kappa(\rho)}2|\nabla\rho|^2
+F(\rho)\Bigr)\,dx  
$$
is conserved.\medbreak
Furthermore, the quantity $\cF$ defined in  \eqref{eq:F} satisfies
$$
\frac d{dt}\cF=2\cH+\int\Bigl(dP-2F+\frac
d2(\rho\kappa)'|\nabla\rho|^2\Bigr)\,dx. 
$$
\end{theorem}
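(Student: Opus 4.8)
The plan is to work from the conservative form \eqref{eq:capillarycons} and to exploit the fact that the three tensors on the right-hand side of its momentum equation --- the Reynolds tensor $\rho v\otimes v$, the pressure contribution $P(\rho)I_d$ and the Korteweg tensor $K$ --- are all \emph{symmetric} and in divergence form. The identities \eqref{eq:M}, \eqref{eq:X} and \eqref{eq:I} are the cheapest: integrating the continuity equation $\partial_t\rho+\DIV(\rho v)=0$ against $1$, against $x$ and against $\frac12|x|^2$, and discarding boundary terms thanks to the decay at infinity, yields $\frac{d}{dt}\cM=0$, $\frac{d}{dt}\cX=\int\rho v=\cP$ and $\frac{d}{dt}\cI=\int x\cdot\rho v=\cF$, exactly as in the Schr\"odinger/Euler setting. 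For \eqref{eq:P} I would integrate the momentum equation of \eqref{eq:capillarycons}: every term on the right is a divergence, so it integrates to zero and $\cP$ is conserved. For the angular momentum \eqref{eq:cA} I would differentiate $\int x\wedge\rho v$ and integrate the stress by parts; since $\rho v\otimes v$, $P(\rho)I_d$ and $K$ are symmetric, their contribution vanishes, which is the classical cancellation $\int x\wedge\DIV T=0$ valid for any symmetric tensor $T$ decaying at infinity.

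The energy identity is where the capillary structure first enters, and I would obtain it by testing the velocity equation of \eqref{eq:capillary} against $\rho v$. Writing $g:=\kappa(\rho)\Delta\rho+\frac12\kappa'(\rho)|\nabla\rho|^2$ for the capillary potential, the kinetic and convective contributions combine, upon integration and use of $\partial_t\rho=-\DIV(\rho v)$, into $\frac{d}{dt}\int\frac12\rho|v|^2$; the term $\int\rho v\cdot\nabla f(\rho)=-\int f(\rho)\DIV(\rho v)$ gives $\frac{d}{dt}\int F(\rho)$; and the capillary term, after one integration by parts, becomes $\int\rho v\cdot\nabla g=\int g\,\partial_t\rho$, which is handled by the key computation
$$
\int_{\R^d} g\,\partial_t\rho\,dx=-\frac{d}{dt}\int_{\R^d}\frac{\kappa(\rho)}2|\nabla\rho|^2\,dx.
$$
This last identity follows by differentiating the capillary energy, integrating by parts once in $\int\kappa(\rho)\nabla\rho\cdot\nabla\partial_t\rho$, and recognizing $\DIV(\kappa(\rho)\nabla\rho)=\kappa'(\rho)|\nabla\rho|^2+\kappa(\rho)\Delta\rho$. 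Assembling the three contributions gives $\frac{d}{dt}\cH=0$.

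Finally, for the virial law I would differentiate $\cF=\int x\cdot(\rho v)$ and substitute the conservative momentum equation. Integrating each divergence against $x$ produces traces: the convective term yields $\int\rho|v|^2$, the pressure term yields $d\int P$, and the Korteweg term yields $-\int\mathrm{tr}\,K$. The one genuinely computational step, and the point I expect to be the main obstacle, is evaluating $\int\mathrm{tr}\,K$ after the simplification
$$
\int_{\R^d}\rho\kappa(\rho)\Delta\rho\,dx=-\int_{\R^d}(\rho\kappa)'(\rho)|\nabla\rho|^2\,dx,
$$
obtained by one integration by parts; this collapses the $I_d$-part of $K$ and, together with the $-\kappa(\rho)\nabla\rho\otimes\nabla\rho$ part, leaves exactly $\int\kappa(\rho)|\nabla\rho|^2+\frac{d}{2}\int(\rho\kappa)'(\rho)|\nabla\rho|^2$. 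Matching this against $2\cH$ --- whose capillary piece is $\int\kappa(\rho)|\nabla\rho|^2$ and whose potential piece is $2\int F$ --- and against the claimed remainder $\int(dP-2F)$, one checks that all spurious terms cancel and recovers the stated formula for $\frac{d}{dt}\cF$. The careful bookkeeping of the coefficients of $|\nabla\rho|^2$ coming from $\mathrm{tr}\,K$ is the delicate point; the symmetry of $K$ is what makes every other term fall into place.
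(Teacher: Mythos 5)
Your proposal is correct and follows essentially the same route as the paper: the laws for $\cM,$ $\cX,$ $\cI$ from the continuity equation alone, $\cP$ and $\cA$ from the conservative form \eqref{eq:capillarycons} together with the symmetry of $\rho v\otimes v$ and $K$, the energy by testing the velocity equation against $\rho v$ and identifying $\int g\,\partial_t\rho$ with $-\frac{d}{dt}\int\frac{\kappa}{2}|\nabla\rho|^2$, and the virial law via $\int x\cdot\div K=-\int\mathrm{tr}\,K$ followed by the integration by parts $\int\rho\kappa\Delta\rho=-\int(\rho\kappa)'|\nabla\rho|^2$. All the key identities and coefficient bookkeeping in your write-up check out against the paper's computation.
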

\begin{proof}
It is very likely that most of those conservation laws could be
derived from Noether theorem. They can also been obtained by
pedestrian computations.  

For $\eqref{eq:M},$ $\eqref{eq:X}$ and $\eqref{eq:I},$ there is no
difference with the  
 QHD since the only equation on  $\rho$ is concerned. 

For  \eqref{eq:P} the proof is obvious from the conservative form
\eqref{eq:capillarycons}. 

In order to prove the conservation of energy, the simplest method is
to take the  $L^2$ scalar product with $\rho v$ of the first equation of \eqref{eq:capillary}.  
The treatment of the terms on the left-hand side of the equation for
$v$ is the same as in the compressible Euler equation, and one obtains
after several integrations by parts and the use of the equation for
$\rho,$ 
$$\begin{array}{lll}
\Frac d{dt}\Int\Bigl(\frac12\rho|v|^2+F(\rho)\Bigr)\,dx
&=&\Int \rho v\cdot\nabla\Bigl(\kappa\Delta\rho
+\Frac12\kappa'|\nabla\rho|^2\Bigr)\,dx,\\[1.5ex]
&=&\Int\d_t\rho\Bigl(\kappa\Delta\rho
+\Frac12\kappa'|\nabla\rho|^2\Bigr)\,dx,\\[1.5ex]
&=&-\Int\kappa\nabla\rho\cdot\d_t\nabla\rho\,dx
-\Frac12\int\kappa'\d_t\rho|\nabla\rho|^2\,dx,\\[1.5ex]
&=&-\Frac d{dt}\Int\Frac\kappa2|\nabla\rho|^2\,dx.\end{array}
$$
To prove \eqref{eq:cA}, one writes (using the Einstein summation convention),
$$
A_i=\int\eps_{ijk}x^j\rho v^k
$$
with $\eps_{ijk}=0$ if two of the indices are equal,
and equal to the  signature of $(i\ j\ k)$ otherwise.
\smallbreak
Using the conservative form of the equation  for  $\rho v$
and integrating by parts,  one thus deduces
$$\begin{array}{lll}
\Frac d{dt}A_i&=&\Int\eps_{ijk}x^j\d_\ell K_{\ell k}\,dx
-\int\eps_{ijk}x^j\d_kP-\int\eps_{ijk}x^j\d_\ell(\rho v^k v^\ell)\,dx,\\[1.5ex]
&=&-\Int\eps_{ijk} K_{j k}\,dx+0
-\int\eps_{ijk}\rho v^jv^k\,dx.\end{array}
$$
The tensors  $K$ and $\rho v\otimes v$ being symmetric, 
the right-hand member of the previous equality vanishes.
\smallbreak
The simplest way to prove the  conservation law on $\cF,$ is to use
the second equation in \eqref{eq:capillarycons}.  
Integrating by parts  the capillary term and treating the other terms
as in the compressible Euler equation one obtains 
$$\begin{array}{lll}
\Frac{d\cF}{dt}&=&\Int x\cdot\div K\,dx
-\int x\cdot\Bigl(\nabla P+\div(\rho v\otimes v)\Bigr)\,dx,\\[1.5ex]
&=&\Int x^j\d_iK_{ij}\,dx+\int (dP+\rho|v|^2)\,dx.
\end{array}
$$
Using the expression of the tensor $K$ and integrating by parts, one gets
$$
\begin{array}{lll}
\Int x^j\d_iK_{ij}\,dx&=&-\Int{\rm tr}\,K\,dx,\\[1.5ex]
&=&-d\Int\Bigl(\frac12(\kappa+\rho\kappa')|\nabla\rho|^2
 +\rho\kappa\Delta\rho\Bigr)\,dx
+\Int\kappa |\nabla\rho|^2,\\[1.5ex]
&=&\Frac d2\Int(\rho\kappa)'|\nabla\rho|^2\,dx
+\Int\kappa |\nabla\rho|^2\,dx,
\end{array}
$$
from which the claimed equality results.
\end{proof}
\begin{remark} The case of  QHD corresponds to  $(\rho\kappa)'=0.$
For capillary fluids, the presence of extra terms in the conservation
law for $\cF$ seems relatively harmless 
if $\rho\mapsto\rho\kappa(\rho)$ is an increasing function. 
One should be able to prove without difficulty that with a pressure
law such that  $dP-2F\ge0,$ one has  
$$
I(t)\Eq t {+\infty} \cH t^2.
$$ 
In the case where  $\rho\mapsto \rho\kappa$ is decreasing, 
we expect the conservation law for $\cF$ to be the key to proving
finite time blow-up results similar to those of \cite{SiderisCMP}. 
\end{remark}

%%%%%%%%%%%%%%%%%%%%%%%%%%%%%%%%

\def\cprime{$'$}
\providecommand{\bysame}{\leavevmode\hbox to3em{\hrulefill}\thinspace}
\providecommand{\MR}{\relax\ifhmode\unskip\space\fi MR }
% \MRhref is called by the amsart/book/proc definition of \MR.
\providecommand{\MRhref}[2]{%
  \href{http://www.ams.org/mathscinet-getitem?mr=#1}{#2}
}
\providecommand{\href}[2]{#2}

\end{document}